\documentclass[12pt]{amsart}
\usepackage{amssymb,latexsym}
\usepackage{pdfsync}
\usepackage{color}

\usepackage{esint}

\newdimen\AAdi%
\newbox\AAbo%
\def\AAk#1#2{\s_etbox\AAbo=\hbox{#2}\AAdi=\wd\AAbo\kern#1\AAdi{}}%
\def\AAr#1#2#3{\s_etbox\AAbo=\hbox{#2}\AAdi=\ht\AAbo\raise#1\AAdi\hbox{#3}}%
\font\tenmsb=msbm10 at 12pt
\font\sevenmsb=msbm7 at 8pt
\font\fivemsb=msbm5 at 6pt
\newfam\msbfam
\textfont\msbfam=\tenmsb
\scriptfont\msbfam=\sevenmsb
\scriptscriptfont\msbfam=\fivemsb
\def\Bbb#1{{\tenmsb\fam\msbfam#1}}

\newcommand{\beq}{\begin{equation}}
\newcommand{\eeq}{\end{equation}}
\newcommand{\beqr}{\begin{eqnarray}}
\newcommand{\eeqr}{\end{eqnarray}}
\newcommand{\ba}{\begin{array}}
\newcommand{\ea}{\end{array}}

\begin{document}

\newtheorem{theorem}{Theorem}[section]
\newtheorem{lemma}{Lemma}[section]
\newtheorem{corollary}{Corollary}[section]
\newtheorem{remark}{Remark}
\newtheorem{proposition}{Proposition}[section]
\newtheorem{definition}{Definition}
\newtheorem{eg}{Example}
\newtheorem*{claim}{Claim}
\newcommand{\noi}{\noindent}
\newcommand{\dis}{\displaystyle}
\newcommand{\mint}{-\!\!\!\!\!\!\int}
\numberwithin{equation}{section}

\def \bx{\hspace{2.5mm}\rule{2.5mm}{2.5mm}}
\def \vs{\vspace*{0.2cm}}
\def\hs{\hspace*{0.6cm}}
\def \ds{\displaystyle}
\def \p{\partial}
\def \O{\Omega}
\def \o{\omega}
\def \b{\beta}
\def \m{\mu}
\def \l{\lambda}
\def\L{\Lambda}
\def \ul{u_\lambda}
\def \D{\Delta}
\def \d{\delta}
\def \k{\kappa}
\def \s{\sigma}
\def \e{\varepsilon}
\def \a{\alpha}
\def \sm{\sigma}
\def \tf{\tilde{f}}
\def\cqfd{%
\mbox{ }%
\nolinebreak%
\hfill%
\rule{2mm} {2mm}%
\medbreak%
\par%
}
\def \pr {\noindent {\it Proof.} }
\def \rmk {\noindent {\it Remark} }
\def \esp {\hspace{4mm}}
\def \dsp {\hspace{2mm}}
\def \ssp {\hspace{1mm}}

\def\la{\langle}\def\ra{\rangle}

\def \u{u_+^{p^*}}
\def \ui{(u_+)^{p^*+1}}
\def \ul{(u^k)_+^{p^*}}
\def \energy{\int_{\R^n}\u }
\def \sk{\s_k}
\def \mo{\mu_k}
\def\cal{\mathcal}
\def \I{{\cal I}}
\def \J{{\cal J}}
\def \K{{\cal K}}
\def \OM{\overline{M}}

\def\n{\nabla}

\def\fk{{{\cal F}}_k}
\def\M1{{{\cal M}}_1}
\def\Fk{{\cal F}_k}
\def\Fl{{\cal F}_l}
\def\FF{\cal F}
\def\Gk{{\Gamma_k^+}}
\def\n{\nabla}
\def\uuu{{\n ^2 u+du\otimes du-\frac {|\n u|^2} 2 g_0+S_{g_0}}}
\def\uuug{{\n ^2 u+du\otimes du-\frac {|\n u|^2} 2 g+S_{g}}}
\def\sku{\sk\left(\uuu\right)}
\def\qed{\cqfd}
\def\vvv{{\frac{\n ^2 v} v -\frac {|\n v|^2} {2v^2} g_0+S_{g_0}}}
\def\vvs{{\frac{\n ^2 \tilde v} {\tilde v}
 -\frac {|\n \tilde v|^2} {2\tilde v^2} g_{S^n}+S_{g_{S^n}}}}
\def\skv{\sk\left(\vvv\right)}
\def\tr{\hbox{tr}}
\def\pO{\partial \Omega}
\def\dist{\hbox{dist}}
\def\RR{\Bbb R}\def\R{\Bbb R}
\def\C{\Bbb C}
\def\B{\Bbb B}
\def\N{\Bbb N}
\def\Q{\Bbb Q}
\def\Z{\Bbb Z}
\def\PP{\Bbb P}
\def\EE{\Bbb E}
\def\F{\Bbb F}
\def\G{\Bbb G}
\def\H{\Bbb H}
\def\SS{\Bbb S}\def\S{\Bbb S}

\def\div{\hbox{div}\,}

\def\lcf{{locally conformally flat} }

\def\circledwedge{\setbox0=\hbox{$\bigcirc$}\relax \mathbin {\hbox
to0pt{\raise.5pt\hbox to\wd0{\hfil $\wedge$\hfil}\hss}\box0 }}

\def\sss{\frac{\s_2}{\s_1}}

\date{\today}

\title[ Hamilton's gradient estimates ]{Hamilton's gradient estimates and Liouville theorems for $u_{t}=\Delta u^{m}+au\log u+bu$ on Riemannian manifolds}

\author{}

 \author[Jun Sun]{Jun Sun } 
\address{School of Mathematics and Statistics\\ Wuhan University\\Wuhan 430072,
China
 }
 \email{sunjun@whu.edu.cn}

\author[Jiaming Yang]{Jiaming Yang}
\address{School of Mathematics and Statistics\\ Wuhan University\\Wuhan 430072,
China
 }
 \email{2020302142030@whu.edu.cn}

\begin{abstract}
    In this paper,  we apply Nash-Moser iteration and Saloff-Coste's Sobolev inequalities to derive gradient estimates for positive solutions to a class of nonlinear parabolic equations of the form
    \begin{equation*}
    u_{t}=\Delta u^{m}+a(x, t)u\log u+b(x, t)u, \quad 1-\frac{2}{n}<m<1 \quad \text{or}\quad  1<m<1+\frac{1}{1+\sqrt{2n}}, 
    \end{equation*}
on a complete Riemannian manifold $(M,g)$ of dimension $n$, where $a(x, t)$, $b(x, t)$ are $C^{1}$ functions. For $a=b=0$, this equation becomes the porous medium equation (PME) or the fast diffusion equation (FDE) when $m>1$ or $m<1$, respectively. As consequences, we obtain Liouville type theorems for the corresponding constant coefficient equations when
    \begin{equation*}
    1-\frac{1}{\sqrt{n-1}}<m<1\quad \text{and}\quad 1<m<1+\frac{1}{\sqrt{n-1}}. 
    \end{equation*}

\vskip12pt

\noindent{\it Keywords and phrases}:gradient estimate; Nash-Moser iteration; Liouville type theorem.

\noindent {\it MSC 2020}: 58J05, 35B45.

\end{abstract}
\maketitle
\raggedbottom
\setlength{\jot}{5pt}
\setlength{\abovedisplayskip}{7pt plus 1pt minus 1pt}
\setlength{\belowdisplayskip}{7pt plus 1pt minus 1pt}
\setlength{\abovedisplayshortskip}{3pt plus 1pt}
\setlength{\belowdisplayshortskip}{5pt plus 1pt minus 1pt}
\section{Introduction}

\allowdisplaybreaks[4]

\vspace{.1in}

The well-known Liouville theorem says that any bounded harmonic function in Euclidean space $\mathbb{R}^{n}$ is constant. In 1975, Yau \cite{Yau} proved that any positive harmonic function on a Riemannian manifold with non-negative Ricci curvature is constant. Since then, many researchers began to pay attention to some nonlinear equations related to Laplace equation. Later in 1986, Li and Yau \cite{LY} derived the celebrated Li-Yau estimate for positive solutions to the heat equation:
\begin{theorem}\label{thm;Li-Yau} (\cite{LY})
Let $(M, g)$ be a compact Riemannian manifold with $\mathrm{Ric}\,_{M}\geqslant -k$ for some $k\geqslant 0$. Let $B_{2R}\subset M$ be the geodesic ball of radius $2R$ centered at a fixed point $O$. Suppose $u$ is any positive solution to heat equation $u_{t}=\Delta u$ on $B_{2R}\times [0, \infty)$. Then for $\alpha >1$, there holds in $B_{R}$ that
\begin{equation*}
    \sup_{B_{R}}\left(\frac{|\nabla u|{^2}}{u{^2}}-\alpha \frac{u_{t}}{u}\right)\leqslant \frac{C\alpha {^2}}{R{^2}}\left(\frac{\alpha {^2}}{\alpha -1}+\sqrt{k}R\right)+\frac{n\alpha {^2}k}{2(\alpha -1)}+\frac{n\alpha {^2}}{2t}. 
\end{equation*}
\end{theorem}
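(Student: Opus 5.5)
The plan is the classical maximum principle argument of Li and Yau, localized by a cutoff function. Set $f=\log u$, so that $f_t=\Delta f+|\nabla f|^2$, and consider
\[
F=t\left(|\nabla f|^2-\alpha f_t\right).
\]
A direct computation with the Bochner formula
\[
\tfrac12\Delta|\nabla f|^2=|\nabla^2 f|^2+\langle\nabla f,\nabla\Delta f\rangle+\mathrm{Ric}(\nabla f,\nabla f),
\]
together with $\mathrm{Ric}\geqslant -k$, gives
\[
(\Delta-\partial_t)F\geqslant -2\langle\nabla f,\nabla F\rangle+2t|\nabla^2f|^2-2kt|\nabla f|^2-\frac Ft .
\]
Write $\Delta f=f_t-|\nabla f|^2=-\frac{1}{\alpha t}\bigl(F+(\alpha-1)t|\nabla f|^2\bigr)$ and use $|\nabla^2 f|^2\geqslant\frac1n(\Delta f)^2$. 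This produces the crucial quadratic term
\[
\frac{2}{n\alpha^2 t}\bigl(F+(\alpha-1)t|\nabla f|^2\bigr)^2 ,
\]
which will dominate everything else.

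Next, localize. Take a cutoff $\varphi=\psi(r(x)/R)$ with $\psi\equiv1$ on $[0,1]$, $\psi\equiv0$ on $[2,\infty)$, $-C\psi^{1/2}\leqslant\psi'\leqslant0$ and $\psi''\geqslant -C$. The Laplacian comparison theorem under $\mathrm{Ric}\geqslant-(n-1)k$ gives $\Delta r\leqslant (n-1)\sqrt k\coth(\sqrt k r)$, and hence
\[
\frac{|\nabla\varphi|^2}{\varphi}\leqslant \frac{C}{R^2},\qquad -\Delta\varphi\leqslant \frac{C}{R^2}(1+\sqrt kR).
\]
Use Calabi's trick to handle the cut locus. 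Fix $T>0$ and let $(x_0,t_0)$ be a maximum point of $\varphi F$ on $B_{2R}\times[0,T]$, and assume $\varphi F>0$ there; otherwise there is nothing to prove. At this point,
\[
\nabla(\varphi F)=0,\qquad \Delta(\varphi F)\leqslant0,\qquad \partial_t(\varphi F)\geqslant0.
\]
Multiply the differential inequality by $\varphi$ at $(x_0,t_0)$ and substitute $\nabla F=-F\nabla\varphi/\varphi$. The result is
\[
0\geqslant \frac{2t\varphi}{n\alpha^2 t^2}\bigl(F+(\alpha-1)t|\nabla f|^2\bigr)^2-2F|\nabla f|\,|\nabla\varphi|-2kt\varphi|\nabla f|^2-F\frac{|\nabla\varphi|^2}{\varphi}+F\Delta\varphi-\frac{\varphi F}{t}.
\]

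The main obstacle is absorbing the cross term $F|\nabla f||\nabla\varphi|$ and the curvature term $kt|\nabla f|^2$ into the quadratic term with the right dependence on $\alpha$. Set $y=\varphi t|\nabla f|^2$ and $z=\varphi F$, and multiply through by $t\varphi$. Expand
\[
\bigl(z+(\alpha-1)y\bigr)^2\geqslant z^2+2(\alpha-1)yz+(\alpha-1)^2y^2 .
\]
Use the term $(\alpha-1)^2y^2$ to absorb $2k\varphi t^2|\nabla f|^2=2kty$ via Young's inequality; this costs $\frac{n^2\alpha^4k^2t^2}{4(\alpha-1)^2}$, and that cost is the source of the $\frac{n\alpha^2k}{2(\alpha-1)}$ term. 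Use the cross term $2(\alpha-1)yz$ to absorb $2zt\,y^{1/2}|\nabla\varphi|\varphi^{-1/2}\leqslant C z y^{1/2}t/R$; this costs $\frac{C\alpha^2}{(\alpha-1)R^2}$ multiplied by $z$ and $t$. The remaining linear terms in $z$ contribute $C(1+\sqrt kR)/R^2$ from $\Delta\varphi$ and $|\nabla\varphi|^2/\varphi$, and $1/t$ from the time derivative.

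Collecting terms gives a quadratic inequality $z^2\leqslant n\alpha^2 t\,(A z+B)$. Solving it shows that $z\leqslant t\Bigl(\frac{C\alpha^2}{R^2}\bigl(\frac{\alpha^2}{\alpha-1}+\sqrt kR\bigr)+\frac{n\alpha^2k}{2(\alpha-1)}\Bigr)+\frac{n\alpha^2}{2}$, after adjusting constants. Finally restrict to $B_R$, where $\varphi=1$, evaluate at $t=T$, and divide by $T$ to obtain the stated bound.
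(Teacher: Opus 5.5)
Your proposal is correct. It is the classical Li--Yau maximum-principle argument (cutoff $\varphi F$, Laplacian comparison, Calabi's trick, a quadratic inequality for $z=\varphi F$), which is the proof behind \cite{LY}; the paper gives no proof of this statement and simply quotes it.

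The only slips are cosmetic, and none affects the constants' structure:
\begin{itemize}
\item The product rule gives $-2F|\nabla\varphi|^2/\varphi$, not $-F|\nabla\varphi|^2/\varphi$.
\item After multiplying by $t\varphi$, the cross term is $2z\,t^{1/2}y^{1/2}|\nabla\varphi|\varphi^{-1/2}$, not with a factor $t$. This $t^{1/2}$ is what actually produces the cost $\frac{C\alpha^2}{(\alpha-1)R^2}\,tz$ that you state.
\item You should say explicitly that $t_0\leqslant T$ and that the bound is increasing in $t$, before evaluating at $t=T$ and dividing by $T$.
\end{itemize}
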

This work facilitated the widespread application of gradient estimates. In 1993, Hamilton \cite{Ha} derived the following gradient estimate for closed manifolds:
\begin{theorem}[\cite{Ha}]\label{thm;Hamilton}
Let $(M, g)$ be a compact Riemannian manifold of dimension $n\geqslant 2$ with $\mathrm{Ric}\,_{M}\geqslant -k$ for some $k\geqslant 0$. Suppose $u$ is any positive solution to the heat equation
\begin{equation*}
\frac{\partial u}{\partial t}=\Delta u
\end{equation*}
with $u\leqslant A$. Then 
\begin{equation*}
\frac{\left|\nabla u\right|^{2}}{u^{2}}\leqslant \left( \frac{1}{t}+2k \right) \log \frac{A}{u}. 
\end{equation*}
\end{theorem}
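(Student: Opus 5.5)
The plan is Hamilton's maximum principle argument, run on a quantity built from $f=\log(u/A)\leqslant 0$. Since $u_t=\Delta u$, we have
\[
f_t=\Delta f+|\nabla f|^2 .
\]
Replacing $A$ by $A+\varepsilon$ and letting $\varepsilon\to 0$ at the end, we may assume $f<0$ strictly. With $w=|\nabla f|^2$ and $\varphi=-f=\log(A/u)>0$, the target is
\[
\frac{w}{\varphi}\leqslant \frac1t+2k .
\]
Following Hamilton, I would study $h=\varphi\,w$ and then divide by $\varphi^2$. The equivalent route is $\psi=\tfrac{|\nabla u|^2}{u}$ together with $u\log(A/u)$, and I will use whichever form gives the cleaner computation.

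Concretely, I would compute $(\partial_t-\Delta)\tfrac{|\nabla u|^2}{u}$ using the Bochner formula and $\mathrm{Ric}\geqslant -k$. After completing the square via $\tfrac{1}{u}\big|\nabla^2 u-\tfrac{\nabla u\otimes\nabla u}{u}\big|^2\geqslant 0$, this should give
\[
\Big(\partial_t-\Delta\Big)\frac{|\nabla u|^2}{u}\leqslant 2k\,\frac{|\nabla u|^2}{u}.
\]
For $\eta=u\log(A/u)$ one checks
\[
(\partial_t-\Delta)\eta=\frac{|\nabla u|^2}{u}.
\]
Now set
\[
P=\frac{t}{1+2kt}\,\frac{|\nabla u|^2}{u}-u\log\frac{A}{u}.
\]
Combining the two inequalities,
\[
(\partial_t-\Delta)P\leqslant \frac{|\nabla u|^2}{u}\Big[\frac{d}{dt}\Big(\frac{t}{1+2kt}\Big)+\frac{2kt}{1+2kt}-1\Big].
\]
The coefficient $\tfrac{t}{1+2kt}$ is chosen exactly so that the bracket is $\leqslant 0$: its derivative is $\tfrac{1}{(1+2kt)^2}$, and $\tfrac{2kt}{1+2kt}-1=-\tfrac{1}{1+2kt}$, so the bracket equals $\tfrac{1}{(1+2kt)^2}-\tfrac{1}{1+2kt}\leqslant 0$.

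At $t=0$ we have $P\leqslant 0$, since $u\leqslant A$. Because $M$ is compact, the weak maximum principle gives $P\leqslant 0$ for all $t$. Unwinding,
\[
\frac{|\nabla u|^2}{u^2}\leqslant \frac{1+2kt}{t}\log\frac{A}{u}=\Big(\frac1t+2k\Big)\log\frac Au .
\]

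The main obstacle is the Bochner computation, namely getting the exact inequality $(\partial_t-\Delta)\tfrac{|\nabla u|^2}{u}\leqslant 2k\tfrac{|\nabla u|^2}{u}$. This requires expanding $\Delta(|\nabla u|^2/u)$, using
\[
\Delta|\nabla u|^2=2|\nabla^2u|^2+2\langle\nabla u,\nabla\Delta u\rangle+2\mathrm{Ric}(\nabla u,\nabla u),
\]
and verifying that the cross terms assemble into $-\tfrac{2}{u}\big|\nabla^2u-\tfrac{du\otimes du}{u}\big|^2$. The time-dependent coefficient can then be adjusted to absorb the curvature term, as checked above.
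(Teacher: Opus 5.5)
The paper does not prove this statement. It quotes it from Hamilton \cite{Ha} as background, and the paper's own estimates use Nash--Moser iteration rather than the maximum principle.

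Your argument is Hamilton's original proof, and it is correct. The two identities hold:
$(\partial_t-\Delta)\frac{|\nabla u|^2}{u}=-\frac{2}{u}\bigl|\nabla^2u-\frac{du\otimes du}{u}\bigr|^2-\frac{2}{u}\mathrm{Ric}(\nabla u,\nabla u)\leqslant 2k\frac{|\nabla u|^2}{u}$
and
$(\partial_t-\Delta)\bigl(u\log\frac{A}{u}\bigr)=\frac{|\nabla u|^2}{u}$.
The bracket then equals $-\frac{2kt}{(1+2kt)^2}\leqslant 0$, and the weak maximum principle on the closed manifold gives $P\leqslant 0$.

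Two minor remarks:
\begin{itemize}
\item The perturbation $A\to A+\varepsilon$ and the quantity $\varphi w$ from your first paragraph play no role in the $P$ route. You never divide by $\log(A/u)$, only by $u$ and by $\frac{t}{1+2kt}$ for $t>0$.
\item If $u$ is only given on $M\times(0,T]$, or is not smooth up to $t=0$, run the argument from time $\epsilon$ with $\frac{t-\epsilon}{1+2k(t-\epsilon)}$ in place of $\frac{t}{1+2kt}$, then let $\epsilon\to 0$.
\end{itemize}
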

This result can compare the derivatives at different positions at the same time, and was extended by Souplet and Zhang \cite{SZ} to complete noncompact manifolds in 2006. Except for the classical harmonic equation and heat equation, the gradient estimates for solutions to many other equations deserve to be studied. 

Let $(M, g)$ be a Riemannian manifold of dimension $n\geqslant 2$ and $m>0$ be a parameter. The equation
\begin{equation}\label{equ;0}
\frac{\partial u}{\partial t}=\Delta u^{m},
\end{equation}
which arises in physics, is a generalization of the heat equation ($m=1$). It is called porous medium equation (PME for short) when $m>1$ and fast diffusion equation (FDE for short) when $m<1$, respectively.

The mathematical theory of PME and FDE is based on a priori estimate. In 1979, Aronson and B\'{e}nilan \cite{AB} obtained a celebrated second-order differential inequality of the form
\begin{equation*}
\sum\limits_{i}\frac{\partial }{\partial x^{i}}\left( mu ^{m-2}\frac{\partial u}{\partial x^{i}} \right) \geqslant -\frac{\kappa }{t}, \quad \kappa :=\frac{n}{n(m-1)+2}, 
\end{equation*}
which applies to all positive smooth solutions of (\ref{equ;0}) defined on the whole Euclidean space on the condition that $\displaystyle{m>1-\frac{2}{n}}$. In 2009, Lu, Ni, V\'{a}zquez and Villani \cite{L} studied PME and FDE on manifolds and derived a local Li-Yau type gradient estimate. 

Many studies focused on Hamilton type estimates of solutions to PME and FDE. In \cite{Z2} and \cite{Z1}, Zhu obtained Hamilton type gradient estimate and Liouville type theorem:
\begin{theorem}[\cite{Z2}, \cite{Z1}]\label{thm;Zhu}
Let $(M, g)$ be a Riemannian manifold of dimension $n\geqslant 2$ with $\mathrm{Ric}\,_{M}\geqslant -k$ for some $k\geqslant 0$. Suppose $u$ is any positive solution to (\ref{equ;0}) in $Q_{R, T}=B_{R}(x_{0})\times [t_{0}-T, t_{0}]\subset M\times (-\infty, \infty)$. Then:
\begin{flushleft}
(i) If $\displaystyle{1-\frac{2}{n}}<m<1$, there exists a positive constant $C=C(m, n)$ such that
\begin{equation*}
\frac{\left|\nabla v\right|}{v^{1/2}}\leqslant CM^{1/2}\left( \frac{1}{R}+\frac{1}{\sqrt{T}}+\sqrt{k} \right) 
\end{equation*}
in $Q_{R/2, T/2}$, where $v=m u^{m}/(1-m)$ and $M=\sup_{Q_{R, T}}v$;

(ii) If $\displaystyle{1<m<1+\frac{1}{1+\sqrt{2n}}}$, there exists a positive constant $C=C(m, n)$ such that
\begin{equation*}
v^{\frac{1}{4}\frac{2-m}{m-1}}\left|\nabla v\right|\leqslant CM^{1+\frac{1}{4}\frac{2-m}{m-1}}\left( \frac{1}{R}+\frac{1}{\sqrt{T}}+\sqrt{k} \right) 
\end{equation*}
in $Q_{R/2, T/2}$, where $v=m u^{m}/(m-1)$ and $M=\sup_{Q_{R, T}}v$. 
\end{flushleft}
\end{theorem}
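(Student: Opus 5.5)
We prove Theorem \ref{thm;Zhu} by a Souplet--Zhang type localized maximum principle. In each case the first step is to rewrite the equation in terms of the pressure-like variable $v$ and then build an auxiliary quantity whose gradient term obeys a favourable parabolic inequality.

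\emph{Case (i), $1-\frac{2}{n}<m<1$.} Put $v=\frac{m}{1-m}u^{m}$. A direct computation gives
\[
v_t=(1-m)\,v\,\Delta v-|\nabla v|^2\cdot c(m),
\]
with an explicit constant $c(m)$, up to normalization. Since $0<v\leqslant M$, we consider
\[
w=\frac{|\nabla v|^2}{v}\qquad\text{or}\qquad w=\frac{|\nabla v|^2}{v(\beta M-v)}
\]
for a suitable $\beta>1$, mimicking Hamilton's $|\nabla \log u|^2/\log(A/u)$. We then apply the Bochner formula
\[
\tfrac12\Delta|\nabla v|^2=|\nabla^2 v|^2+\langle\nabla v,\nabla\Delta v\rangle+\mathrm{Ric}(\nabla v,\nabla v),
\]
together with $\mathrm{Ric}\geqslant -k$. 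The aim is an inequality of the form
\[
\mathcal L w:=(1-m)v\Delta w-w_t+(\text{drift})\cdot\nabla w\;\geqslant\; c\,w^2-Ck\,Mw
\]
with $c>0$. To obtain it, we estimate $|\nabla^2 v|^2\geqslant\frac{(\Delta v)^2}{n}$, or better by splitting $\nabla^2 v$ into its $\nabla v$-direction and the orthogonal part, and use the equation to replace $\Delta v$ by $v_t$ and $|\nabla v|^2$. The constraint $m>1-\frac{2}{n}$ is exactly what keeps the resulting quadratic form positive definite, which yields $c>0$.

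\emph{Localization.} Take a cutoff $\psi(x,t)$ supported in $Q_{R,T}$, equal to $1$ on $Q_{R/2,T/2}$, with
\[
|\nabla\psi|^2/\psi\leqslant C/R^2,\qquad |\psi_t|\leqslant C/T,\qquad -\Delta\psi\leqslant C\bigl(1+\sqrt k R\bigr)/R^2,
\]
where the Laplacian bound uses the Laplacian comparison theorem and Calabi's trick at the cut locus. At a maximum point of $\psi w$ we have $\nabla(\psi w)=0$, $\Delta(\psi w)\leqslant0$ and $\partial_t(\psi w)\geqslant0$. Inserting these into the differential inequality, multiplying by $\psi$, and absorbing cross terms by Young's inequality (the factor $v\leqslant M$ appears in the diffusion coefficient) gives
\[
\psi w\leqslant CM\left(\frac1{R^2}+\frac1T+k\right).
\]
Taking square roots gives $|\nabla v|/v^{1/2}\leqslant CM^{1/2}\left(\frac1R+\frac1{\sqrt T}+\sqrt k\right)$ on $Q_{R/2,T/2}$.

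\emph{Case (ii), $m>1$.} Put $v=\frac{m}{m-1}u^{m-1}$, the usual pressure, with $v_t=(m-1)v\Delta v+|\nabla v|^2$. We test
\[
w=v^{\gamma}|\nabla v|^2,\qquad \gamma=\tfrac12\,\tfrac{2-m}{m-1},
\]
as suggested by the stated exponent (the estimate bounds $v^{\gamma/2}|\nabla v|$). Since $v$ may be small here, the power $\gamma$ is chosen to compensate for the degenerate diffusion coefficient $(m-1)v$. The computation is the same as in case (i): Bochner's formula, refined Kato-type splitting of the Hessian, then the equation. The resulting coefficient of the $w^2$ term is a quadratic expression in $m-1$ and $n$. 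Requiring it to be positive leads to the restriction $1<m<1+\frac{1}{1+\sqrt{2n}}$, which comes from a discriminant condition of the form $(m-1)^2\cdot 2n<(2-m)^2$ or similar. Localization then proceeds as before and gives $w\leqslant CM^{2+\gamma}\left(\frac1{R^2}+\frac1T+k\right)$, which is the claimed estimate after taking square roots.

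\emph{Main obstacle.} The hard step is the algebra that produces a strictly positive coefficient $c$ in front of $w^2$ after using the equation to eliminate $\Delta v$. One has to optimize the free parameters, namely $\beta$ in case (i) and $\gamma$ together with the Young-inequality weights in case (ii), and verify that positivity holds exactly in the stated ranges of $m$. I would first write the inequality at the maximum point as a quadratic form in $\bigl(\Delta v,\ |\nabla v|^2/v\bigr)$ and choose the parameters to make its discriminant negative.
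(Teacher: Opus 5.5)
The paper gives no proof of this theorem. It is quoted from Zhu, whose argument is the maximum-principle localization you describe, and your auxiliary quantities are the right ones.

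In (i), $w=|\nabla v|^2/v$ satisfies $\bigl(\partial_t-(1-m)v\Delta\bigr)w+\bigl(1-\tfrac{n(1-m)}{2}\bigr)w^2\le-2m\langle\nabla v,\nabla w\rangle+2(1-m)kvw$. You get this by completing the square in $\Delta v$, not by substituting the equation (which would bring back $v_t$). This is exactly where $m>1-\frac2n$ comes from.

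In (ii), $w=v^{\gamma}|\nabla v|^2$ is the paper's $F$ from Theorem \ref{thm;R1} with $\beta=-\gamma$. Your condition $2n(m-1)^2<(2-m)^2$ is precisely $A=\frac{(2-m)^2}{4(m-1)}-\frac{n(m-1)}{2}>0$. The crude bound $|\nabla^2v|^2\ge(\Delta v)^2/n$ already gives both ranges; the directional splitting of Lemma \ref{lemma;4} is what enlarges them in Theorems \ref{thm;2} and \ref{thm;4}.

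There is one concrete error in (i). The substitution $v=\frac{m}{1-m}u^{m}$ gives $v_t=m\bigl(\frac{1-m}{m}v\bigr)^{(m-1)/m}\Delta v$, not $(1-m)v\Delta v-c(m)|\nabla v|^2$. You need $v=\frac{m}{1-m}u^{m-1}$, for which $v_t=(1-m)v\Delta v-|\nabla v|^2$; you tacitly used $u^{m-1}$ in (ii) already.

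The genuine gap is the last step of the localization: the factor $M$ in front of $1/T$ does not come out. In $\bigl(\partial_t-(1-m)v\Delta\bigr)(\psi w)$ the time derivative has coefficient $1$, not $v$. So at the maximum point the $\psi_t$-term only gives $\psi w\le C|\psi_t|\le C/T$ in (i). In (ii), where the absorbing term is $Av^{-1-\gamma}w^2$, it gives $\psi w\le Cv^{1+\gamma}|\psi_t|\le CM^{1+\gamma}/T$. Only the $\Delta\psi$, $|\nabla\psi|^2/\psi$, drift and Ricci terms carry the extra factor $v\le M$. What your argument actually proves is
\[
\frac{|\nabla v|}{v^{1/2}}\le C\Bigl(\frac{1}{\sqrt T}+M^{1/2}\Bigl(\frac1R+\sqrt k\Bigr)\Bigr),\qquad v^{\gamma/2}|\nabla v|\le CM^{\frac{1+\gamma}{2}}\Bigl(\frac{1}{\sqrt T}+M^{1/2}\Bigl(\frac1R+\sqrt k\Bigr)\Bigr).
\]
This has the same shape as \eqref{equ;80}, and it yields the quoted bounds only when $M\ge1$.

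No choice of parameters can do better. The function $u_\lambda(x,t)=\lambda^{1/(m-1)}u\bigl(x,t_0+\lambda(t-t_0)\bigr)$ again solves \eqref{equ;0}, now on $B_R\times[t_0-T/\lambda,t_0]$, with $v$ and $M$ multiplied by $\lambda$. Applying the quoted (i) to $u_\lambda$ gives $\frac{|\nabla v|}{v^{1/2}}\le CM^{1/2}\bigl(\frac1R+\sqrt{\lambda/T}+\sqrt k\bigr)$ on $Q_{R/2,T/2}$ for every $\lambda>0$, hence a bound independent of $T$.

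That bound is false. On $\mathbb{R}^n$ with $k=0$, take the $x_1$-periodic solution with $u(\cdot,0)=2+\sin x_1$; it stays between $1$ and $3$. Letting $R\to\infty$ would force $\nabla v\equiv0$ on $[T/2,T]$, which fails for small $T$. The same scaling rules out the quoted (ii).

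So you should aim for the scale-invariant estimates displayed above, which your argument does prove and which suffice for the Liouville applications. Recover the quoted form only under $M\ge1$.
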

This result was proved by using the maximum principle, and was improved by Xu \cite{X}, Huang and Ma \cite{HM}, Wang, Xie and Zhang \cite{WXZ}, and Huang, Xu and Zeng \cite{HXZ}. Commonly, these works apply the maximum principle to give the estimates, and the arguments differ in the calculation of the coefficient of the highest-order term, which we hope to be positive. This leads to the difference in the valid range of $m$.

Recently in 2025, Huang and Shen \cite{HS} derived Li-Yau type gradient estimates for PME and FDE in the weak sense via Nash-Moser iteration. They improved the results of Lu, Ni, V\'{a}zquez and Villani in \cite{L} and Huang, Huang and Li in \cite{HHL}. 

In the other direction, in 2024, Wang and Wang \cite{WW2,WW1} applied Nash-Moser iteration to prove the gradient estimates for solutions to a class of nonlinear elliptic equations of the form
\begin{equation}\label{equ;57}
\Delta u+a(x)u\log u+b(x)u=0. 
\end{equation}
The conditions of their results contain the integral Ricci curvature conditions and integral coefficient conditions. This type of equation (\ref{equ;57}) is closely related to that of Euler-Lagrange equations of the $\mathcal{W}$-entropy and Log-Sobolev functional on $(M, g)$. Similar ideas apply to various nonlinear equation (\cite{HHW}, \cite{JWZ}, etc.).

In 2018, Zhang and Zhu \cite{ZZ} derived Li-Yau type gradient estimates of the solutions to heat equation under integral Ricci curvature conditions. Later in 2020, Wang \cite{W} solved a similar problem for solutions to $u_{t}=\Delta u+au\log u$. Their method depends on the properties of heat kernel. Unfortunately, although researchers have proved many properties of the fundamental solutions to PME and FDE on Euclidean space and some special Riemannian manifolds (see \cite{GMP} and \cite{V} for instance), there are few related results on general manifolds.

\vspace{.1in}

In this paper we are concerned with a class of nonlinear parabolic equations 
\begin{equation}\label{equ;1}
\frac{\partial u}{\partial t}=\Delta u^{m}+a(x,t)u\log u+b(x,t)u, 
\end{equation}
where $a$, $b$ are $C^{1}$ functions on $M\times \mathbb{R}$ and $m>0$. We call (\ref{equ;1}) \textbf{PME type} if $m>1$ and \textbf{FDE type} if $0<m<1$. Different from Huang and Shen's result of Li-Yau type gradient estimates, we apply Nash-Moser iteration to study the Hamilton's gradient estimates and related issues for the positive solutions to (\ref{equ;1}). We also extend the method in \cite{HXZ} to FDE to improve the valid range of $m$. In fact, the gradient estimates hold when $1-\frac{1}{\sqrt{n-1}}<m<1$ and $a$, $b$ are constants.

Throughout this paper, we fix a point $(x_{0}, t_{0})\in M\times \mathbb{R}$ and denote $B_{R}=B_{R}(x_{0})$, where $B_{R}(x_{0})$ is the geodesic ball of radius $R$ centered at $x_{0}$. Our first main result is the following Hamilton type gradient estimate for positive solutions to PME type equation:
\begin{theorem}\label{thm;R1}
    Let $(M, g)$ be a Riemannian manifold of dimension $n\geqslant 2$ with $\mathrm{Ric}\,_{M}\geqslant -k$ for some $k\geqslant 0$. Suppose that $u$ is any positive solution to (\ref{equ;1}) on $Q_{R}=B_{R}\times (t_{0}-R^{2}, t_{0}]$,  where $1<m<1+\frac{1}{1+\sqrt{2n}}$. Suppose also that $u$ is bounded from below and above by positive constants. Assume that for $0<R\leqslant 1$, $a$, $b$, $\left|\nabla a\right|$ and $\left|\nabla b\right|$ are bounded functions on $Q_{R}$, and denote
    \begin{equation*}
    \sup_{Q_{R}}\left|a\right|=D_{1}\quad ,\quad \sup_{Q_{R}}\left|b\right|=D_{2}\quad ,\quad \sup_{Q_{R}}\left|\nabla a\right|=D_{3}\quad ,\quad \sup_{Q_{R}}\left|\nabla b\right|=D_{4}. 
    \end{equation*}
    Let $v=m u^{m-1}/(m-1)$, then we have for $0<R\leqslant 1$
    \begin{equation}\label{equ;58}
    \sup_{(x, t)\in Q_{R/2}}\left( v^{\frac{2-m}{4(m-1)}}\left|\nabla v\right|+v^{\frac{2-m}{4(m-1)}} \right) \leqslant C\max\left\{ v_{\min}^{-1/2}, 1 \right\} \frac{1+\sqrt{k}R}{R}, 
    \end{equation}
    where $C=C(m, n, D_{1}, D_{2}, D_{3}, D_{4}, v_{\max})$ is a constant. 
\end{theorem}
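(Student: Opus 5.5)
Following the approach announced in the introduction, we use Nash--Moser iteration on a well-chosen auxiliary quantity rather than the maximum principle. With $v=mu^{m-1}/(m-1)$ (the pressure), a direct computation turns (\ref{equ;1}) into
\begin{equation*}
v_t=(m-1)v\Delta v+|\nabla v|^2+(m-1)\Big(a\,v\log u+\big(a+b\big)v\cdot\tfrac{1}{1}\Big)
\end{equation*}
(up to the precise lower-order terms coming from $a u\log u+bu$). Here $\log u$ is a function of $v$ alone, and since $u$ is pinched between positive constants it is bounded. Following Zhu's Theorem~\ref{thm;Zhu}(ii) and \cite{HXZ}, we introduce the auxiliary function
\begin{equation*}
w=v^{\beta}|\nabla v|^{2},\qquad \beta=\frac{2-m}{2(m-1)} .
\end{equation*}
This exponent is twice the one appearing in (\ref{equ;58}), so $w^{1/2}=v^{\frac{2-m}{4(m-1)}}|\nabla v|$. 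The first step is to compute $\partial_t w-(m-1)v\Delta w$ using the Bochner formula with $\mathrm{Ric}\geqslant -k$. The aim is a differential inequality of the form
\begin{equation*}
\partial_t w-(m-1)v\Delta w+\langle X,\nabla w\rangle\leqslant -c_0\, v^{-\beta}w^{2}+C_1 k\,v\,w+C_2\,(w+v^{\beta}+\cdots),
\end{equation*}
where $X$ is a vector field built from $\nabla v$. In this inequality, $C_2$ absorbs the terms involving $a,b,\nabla a,\nabla b$ through $D_1,\dots,D_4$, and $v_{\max}$ enters through $\log u$. The essential point is the positivity $c_0>0$ of the coefficient of the top-order term $|\nabla v|^4$. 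To get it, one splits $|\nabla^2 v|^2\geqslant \frac{1}{n}(\Delta v)^2$ together with the refined Kato-type term, and optimizes over a free parameter. The resulting quadratic condition in $m$ should yield exactly $1<m<1+\frac{1}{1+\sqrt{2n}}$.

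\textbf{Step 2: energy estimate.} Multiply the inequality by $\eta^{2}w^{p}$, where $\eta$ is a space-time cutoff supported in $Q_R$ with $|\nabla\eta|\lesssim 1/R$ and $|\eta_t|\lesssim 1/R^2$, and integrate. Since the diffusion coefficient $(m-1)v$ lies between $(m-1)v_{\min}$ and $(m-1)v_{\max}$, integrating by parts gives control of $\int |\nabla(w^{(p+1)/2}\eta)|^2$ with constant of order $v_{\min}$. This is the source of the factor $\max\{v_{\min}^{-1/2},1\}$ in (\ref{equ;58}). The drift term $\langle X,\nabla w\rangle$ is absorbed using $|X|\lesssim |\nabla v|\lesssim v^{-\beta/2}w^{1/2}$ and Young's inequality against the good term $-c_0 v^{-\beta}w^{p+2}$. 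The good term also absorbs all terms of order $w^{p+1}$ with large coefficients, for $p$ larger than a constant times $(1+\sqrt k R)^2$.

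\textbf{Step 3: iteration.} Apply Saloff-Coste's local Sobolev inequality on $B_R$ under $\mathrm{Ric}\geqslant -k$, whose constant is $e^{C(1+\sqrt kR)}V^{-2/n}R^2$. This yields a reverse-Hölder inequality between $L^{p\chi}$ and $L^{p}$ norms of $w$ on shrinking cylinders, with $\chi=\frac{n+2}{n}$ in the parabolic setting. Start the Moser iteration at $p_0\sim (1+\sqrt kR)^2$, where the absorption from Step~2 gives an $L^{p_0}$ bound of size $\left(\frac{(1+\sqrt kR)^2}{R^2}\right)^{p_0}|Q_R|$. Then iterate with $p_{j}=p_0\chi^{j}$ to reach $\sup_{Q_{R/2}}w\leqslant C\max\{v_{\min}^{-1},1\}\frac{(1+\sqrt kR)^2}{R^2}$. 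Taking square roots gives the bound on $v^{\frac{2-m}{4(m-1)}}|\nabla v|$. The second summand $v^{\frac{2-m}{4(m-1)}}$ is bounded by a constant depending on $v_{\max}$, and since $R\leqslant1$ it is at most $C\frac{1+\sqrt kR}{R}$.

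\textbf{Main obstacle.} The hardest step is Step 1: tracking the cross terms $\langle\nabla v,\nabla|\nabla v|^2\rangle$ and $\Delta v\,|\nabla v|^2$ together with the weight $v^{\beta}$, so that the coefficient of $|\nabla v|^4$ remains strictly positive on the full range of $m$. A secondary difficulty is doing this while keeping all $a,b$ contributions of lower order in $w$, with $\log u$ and $v^{\beta}$ controlled uniformly by $v_{\min}$ and $v_{\max}$.
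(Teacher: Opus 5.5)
\textbf{Overall approach.} Your plan is the paper's plan, and it would work once one piece of bookkeeping is fixed (second paragraph below). The paper uses the same weighted quantity, taking $F=v^{\frac{2-m}{2(m-1)}}(|\nabla v|^2+1)$, i.e.\ your $w$ plus $v^{\frac{2-m}{2(m-1)}}$. The $+1$ makes the $|\nabla a|^2$, $|\nabla b|^2$ terms homogeneous and is why the second summand appears in (\ref{equ;58}). The remaining steps also match:
\begin{itemize}
\item the drift is split by Cauchy--Schwarz between the gradient term and the good term, which forces $l\geqslant 2/((m-1)A)$;
\item Saloff-Coste's inequality is combined with Lemma~\ref{lemma;2};
\item a level-set absorption controls the starting integral;
\item $R\leqslant1$ turns constant coefficients into $R^{-2}$.
\end{itemize}
No refined Kato inequality is needed. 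The inequality $|\nabla^2 v|^2\geqslant(\Delta v)^2/n$, together with rewriting $v^{-\beta}\langle\nabla v,\nabla f\rangle$ through $f=v^{\beta}F$, already gives the top coefficient $A=\frac{(2-m)^2}{4(m-1)}-\frac{n(m-1)}{2}$. This is positive exactly when $m<1+\frac{1}{1+\sqrt{2n}}$, and your exponent is the maximizer. Starting at $p_0\sim(1+\sqrt kR)^2$ rather than the paper's $l_0=\delta_0(1+\sqrt kR)$ is harmless.

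\textbf{The gap: $v_{\min}$-dependence through the logarithm.} This bookkeeping is exactly what fixes the form of the constant in (\ref{equ;58}). The transformed equation is $v_t=(m-1)v\Delta v+|\nabla v|^2+av\log v+b'v$ with $b'=(m-1)b-a\log\frac{m}{m-1}$. In the Bochner computation, $\nabla(av\log v)$ produces $2a\log v\,|\nabla v|^2$, and $\mathcal L$ of the weight produces a further multiple of $a\log v$. So the coefficient of $w$ in your Step~1 inequality contains a bare $a\log v$.
\begin{itemize}
\item Contrary to what you write, this is not a $v_{\max}$-quantity: $|\log v|\to\infty$ as $v\to0$.
\item Bounding it via the pinching of $u$ makes $C_2$ depend on $\log v_{\min}$.
\item You attribute the whole factor $\max\{v_{\min}^{-1},1\}$ in $\sup w$ to the Sobolev step, so you would end up with an extra factor $1+|\log v_{\min}|$. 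Then $C$ is not $C(m,n,D_1,\dots,D_4,v_{\max})$.
\end{itemize}

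\textbf{The paper's fix.} On $(0,v_{\max}]$ one has $v|\log v|\leqslant C_1:=\max\{e^{-1},v_{\max}|\log v_{\max}|\}$. Hence every lower-order term is bounded by $C_3v^{-1}F$, with $C_3$ depending only on $m,n,D_1,\dots,D_4,v_{\max}$. The weight $v^{-1}$ then costs $v_{\min}^{-1}$ only inside the Moser steps, where it is raised to the power $\sum_k\gamma_k^{-1}=\frac{n}{2(l_0+1)}\leqslant\frac12$.

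In the starting $L^{\gamma_1}$-estimate it costs nothing. The good term is $Al_0\int\zeta^2v^{-1}(|\nabla v|^2+1)F^{l_0+1}$, so $C_5l_0R^{-2}\int\zeta^2v^{-1}F^{l_0+1}$ is absorbed on $\{|\nabla v|^2+1\geqslant 4C_5/(AR^2)\}$. On the complement,
$$v^{-1}F^{l_0+1}\leqslant\left(4C_5/(AR^2)\right)^{l_0+1}v^{\frac{2-m}{2(m-1)}(l_0+1)-1},$$
and the exponent is nonnegative, so this is controlled by $v_{\max}$.

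Relatedly, the Sobolev loss is only $v_{\min}^{-n/(2(l_0+1))}$, not $v_{\min}^{-1}$. With that observation you could instead keep your crude bound and absorb the resulting power of $1+|\log v_{\min}|$ into $v_{\min}^{-1/2}$. Either way, some such argument has to be supplied.
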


As an application, when $a, b$ are constants, we can expand the valid range of $m$ and remove the restriction $0<R\leqslant 1$ (see Theorem \ref{thm;2}). Also, we do not need $u$ to be bounded from below when $a\geqslant 0$. In particular, we can derive a Liouville type theorem. 
\begin{corollary}\label{cor;1}
Let $(M, g)$ be a noncompact Riemannian manifold of dimension $n\geqslant 2$ without boundary and with non-negative Ricci curvature. Suppose that $u$ is a positive solution to (\ref{equ;1})  on $M\times (-\infty,T_0]$, where $1<m<1+\frac{1}{\sqrt{n-1}}$ and $a$, $b$ are constants. Suppose also that $u$ is bounded from  above. Then:
\begin{flushleft}
(i) If $a>0$, then $u$ cannot be bounded from above by $e^{-\frac{2}{2m-1}}e^{-\frac{b}{a}}$. That is, 
\begin{equation*}
\sup_{(x, t)\in M\times (-\infty,T_0]}u(x, t)> e^{-\frac{2}{2m-1}}e^{-\frac{b}{a}};
\end{equation*} 

(ii) If $a=0$ and $b<0$, then such $u$ does not exist;

(iii) If $a=b=0$, then $u$ must be a positive constant;

(iv) If $a<0$ and $u\geqslant e^{-\frac{2}{2m-1}}e^{-\frac{b}{a}}$ on $M\times (-\infty,T_0]$, then $u$ must be  $e^{-\frac{b}{a}}$ identically. 
\end{flushleft}
\end{corollary}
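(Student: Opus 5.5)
The plan is to combine the constant-coefficient gradient estimate (Theorem \ref{thm;2}, the improved version of Theorem \ref{thm;R1} for constant $a,b$, valid for $1<m<1+\frac{1}{\sqrt{n-1}}$ and all $R>0$) with an elementary ODE analysis. I expect Theorem \ref{thm;2} to carry some structural hypothesis tied to the sign of $a$. My guess is that for $a>0$ it requires $u\leqslant e^{-\frac{2}{2m-1}}e^{-\frac{b}{a}}$ so that a certain reaction term has a good sign, and that for $a<0$ it requires $u\geqslant e^{-\frac{2}{2m-1}}e^{-\frac{b}{a}}$. This would explain the thresholds in (i) and (iv).

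\textbf{Step 1 (spatial constancy).} Assume whatever hypothesis of Theorem \ref{thm;2} is relevant: in case (i) argue by contradiction and suppose $u\leqslant e^{-\frac{2}{2m-1}}e^{-\frac{b}{a}}$; in case (iv) use the assumed lower bound; in (ii) and (iii) no extra condition should be needed. Set $k=0$ and $v=mu^{m-1}/(m-1)$. Since $u$ is bounded above, $v_{\max}<\infty$. When $a\geqslant0$ no lower bound on $u$ is required, and in case (iv) $u$ is bounded below by assumption.

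Fix any $(x,t)\in M\times(-\infty,T_0]$. Apply the estimate on $Q_R=B_R(x)\times(t-R^2,t]$, which lies inside the domain for every $R$. The right-hand side is $C/R$ with $C$ independent of $R$, so letting $R\to\infty$ gives $v^{\frac{2-m}{4(m-1)}}|\nabla v|(x,t)=0$. Hence $\nabla u\equiv0$ and $u=u(t)$. The main obstacle is checking that the constant in Theorem \ref{thm;2} really is uniform in $R$ under exactly these hypotheses. In particular one must confirm that the $v_{\min}$-dependence disappears, or is controlled by the assumed bounds, when $a\geqslant0$.

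\textbf{Step 2 (the ODE).} With $u=u(t)$ we have $\Delta u^m=0$, so $w=\log u$ satisfies $w'=aw+b$ on $(-\infty,T_0]$.

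(iii) If $a=b=0$, then $w'=0$ and $u$ is a positive constant.

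(ii) If $a=0$ and $b<0$, then $u=ce^{bt}$, which tends to $+\infty$ as $t\to-\infty$. This contradicts boundedness from above.

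For $a\neq0$, write $w=-\frac ba+Ce^{at}$.

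(i) If $a>0$, then $e^{at}\to0$ as $t\to-\infty$, so $\sup u\geqslant\lim_{t\to-\infty}u=e^{-b/a}$. This is strictly larger than $e^{-\frac{2}{2m-1}}e^{-\frac ba}$, contradicting the assumed upper bound. Hence the strict inequality in (i) holds.

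(iv) If $a<0$, then $e^{at}\to+\infty$ as $t\to-\infty$. Boundedness of $u$ from above forces $C\leqslant0$. The positive lower bound $u\geqslant e^{-\frac{2}{2m-1}}e^{-\frac{b}{a}}$ forces $C\geqslant0$. Therefore $C=0$ and $u\equiv e^{-b/a}$.
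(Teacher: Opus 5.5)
Your route is the paper's own: apply Theorem~\ref{thm;2} with $k=0$ on $B_R(x)\times(t-R^2,t]$, let $R\to\infty$ to get $\nabla u\equiv0$, then solve $w'=aw+b$ for $w=\log u$. Your ODE case analysis is correct. The uniformity you worry about is part of the statement of Theorem~\ref{thm;2}: there $C=C(m,n,a,b,V)$, and no lower bound is needed when $a\geqslant0$. (It controls $v^{\frac{1}{2(m-1)}}|\nabla v|$ rather than $v^{\frac{2-m}{4(m-1)}}|\nabla v|$, which is immaterial.)

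\textbf{The gap is in case (i).} The paper's one-line proof also passes over it. Write $\theta=e^{-\frac{2}{2m-1}}e^{-\frac{b}{a}}$. The threshold hypotheses of Theorem~\ref{thm;2} are strict: it requires $\sup u<\theta$ when $a>0$ and $\inf u>\theta$ when $a<0$, not the non-strict versions you guessed. So when you suppose $\sup u\leqslant\theta$ for contradiction, the theorem applies only if $\sup u<\theta$. As written, your argument proves $\sup u\geqslant\theta$, not the strict inequality claimed in (i).

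\textbf{Why strictness matters.} In the proof of Theorem~\ref{thm;2}, the time derivative of the weight $v^{-\lambda}$ adds $-\frac{\lambda}{l+1}(a\log v+b')$ to the reaction coefficient. Since $a\log v+b'=(m-1)(a\log u+b)$, the coefficient becomes $\bigl(2m-1-\frac{\lambda(m-1)}{l+1}\bigr)(a\log u+b)+2a$. For large $l$ this is increasing in $a\log u+b$, and at $u=\theta$ it equals $\frac{2a}{2m-1}\cdot\frac{\lambda(m-1)}{l+1}$.
\begin{itemize}
\item For $a<0$ this value is negative, so (iv) survives with your non-strict bound $u\geqslant\theta$ by rerunning the proof.
\item For $a>0$ it is positive, so the case $\sup u=\theta$ is genuinely not covered.
\end{itemize}

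\textbf{A repair for (i).} Take $\beta=-\frac{1}{m-1}-\eta$ with small $\eta>0$ and keep $\varepsilon=m$. Then $A=\frac{1-(n-1)(m-1)^2-2\eta^2(m-1)^2}{2(m-1)}$ is still positive. Meanwhile $(2-\beta)(m-1)=2m-1+\eta(m-1)$, which gives
\[
\bigl((2-\beta)(m-1)-\tfrac{\lambda(m-1)}{l+1}\bigr)(a\log u+b)+2a\leqslant-\tfrac{2a(m-1)}{2m-1}\bigl(\eta-\tfrac{\lambda}{l+1}\bigr)\leqslant0
\]
on $\{u\leqslant\theta\}$ once $l+1\geqslant\lambda/\eta$. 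The rest of the Moser argument is unchanged. Hence $u$ is spatially constant also when $\sup u=\theta$, and your ODE step gives the contradiction.
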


For FDE type equations, we can prove parallel results of Hamilton type gradient estimates:

\begin{theorem}\label{thm;R2}
Let $(M, g)$ be a Riemannian manifold of dimension $n\geqslant 2$ with $\mathrm{Ric}\,_{M}\geqslant -k$ for some $k\geqslant 0$. Suppose that $u$ is any positive solution to (\ref{equ;1}) on $Q_{R}=B_{R}\times (t_{0}-R^{2}, t_{0}]$,  where $1-\frac{2}{n}<m<1$. Suppose also that $u$ is bounded from below and above by positive constants. Assume that for $0<R\leqslant 1$, $a$, $b$, $\left|\nabla a\right|$ and $\left|\nabla b\right|$ are bounded functions on $Q_{R}$, and denote
\begin{equation*}
\sup_{Q_{R}}\left|a\right|=D_{1}\quad ,\quad \sup_{Q_{R}}\left|b\right|=D_{2}\quad ,\quad \sup_{Q_{R}}\left|\nabla a\right|=D_{3}\quad ,\quad \sup_{Q_{R}}\left|\nabla b\right|=D_{4}. 
\end{equation*}
Let $v=m u^{m-1}/(1-m)$, then we have
\begin{equation}\label{equ;62}
\sup_{(x, t)\in Q_{R/2}}\left( v^{-\frac{2-m}{4(1-m)}}\left|\nabla v\right|+v^{-\frac{2-m}{4(1-m)}} \right) \leqslant C \frac{1+\sqrt{k}R}{R}, 
\end{equation}
where $C=C(m, n, D_{1}, D_{2}, D_{3}, D_{4}, v_{\max}, v_{\min})$ is a constant. 
\end{theorem}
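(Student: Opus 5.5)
We will follow the Nash--Moser scheme used for Theorem \ref{thm;R1}, now adapted to the fast diffusion range. Put $v=mu^{m-1}/(1-m)>0$. A direct computation from (\ref{equ;1}) gives the pressure equation
\begin{equation*}
v_t=(1-m)v\Delta v-|\nabla v|^2+(1-m)\,v\,\bigl(a\log u+a+b\bigr)\cdot(\text{sign-corrected factor}),
\end{equation*}
where $\log u$ is written as a multiple of $\log v$ plus a constant. We introduce $w=v^{-\beta}|\nabla v|^2$ with $\beta=\frac{2-m}{2(1-m)}$, so that $w^{1/2}$ is the first quantity in (\ref{equ;62}). We then set $\mathcal L=\partial_t-(1-m)v\Delta$ and compute $\mathcal L w$ using the Bochner formula with $\mathrm{Ric}\geqslant -k$. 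After completing squares in $\nabla^2 v$, $\nabla v\otimes\nabla v$ and the $\log$-terms, the aim is a differential inequality of the form
\begin{equation*}
\mathcal L w\leqslant -c_0\,v^{\beta-1}w^2+(\text{gradient terms in }\nabla w)+C\bigl(k+D_1+D_2+D_3+D_4\bigr)(w+w^{1/2}+1),
\end{equation*}
with $c_0=c_0(m,n)>0$. The condition $1-\frac2n<m<1$, together with the choice of $\beta$ and the use of $|\nabla^2 v|^2\geqslant\frac1n(\Delta v)^2$, is what makes the leading coefficient positive. Since $v_{\min}\leqslant v\leqslant v_{\max}$, all powers of $v$ are bounded above and below by constants depending on $v_{\min},v_{\max}$. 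The $a,b$ contributions, including $|\nabla a||\log u|$, are absorbed using $D_i$ and the bounds on $u$.

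Next we derive an integral inequality. Choose a cutoff $\eta$ supported in $Q_R$ with $|\nabla\eta|\leqslant C/R$ and $|\eta_t|\leqslant C/R^2$. Multiply the inequality by $\eta^2 w^{p}$, for large $p$, and integrate over $Q_R$. Integration by parts turns $-v\Delta w$ into a Dirichlet term $\int v\,|\nabla (w^{(p+1)/2}\eta)|^2$ plus the error $\int |\nabla v|\,|\nabla w| w^p\eta^2$. This error is controlled by $|\nabla v|\lesssim w^{1/2}$ and Young's inequality, using the good term $c_0 w^{p+2}$. The outcome is a Caccioppoli-type estimate
\begin{equation*}
\sup_t\int \eta^2 w^{p+1}+\iint|\nabla(\eta w^{\frac{p+1}2})|^2+c\,p\iint\eta^2w^{p+2}\leqslant C p^2\frac{(1+\sqrt kR)^2}{R^2}\iint w^{p+1}+\cdots.
\end{equation*}

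Finally we iterate. Saloff-Coste's local Sobolev inequality on $B_R$ (constants $e^{C(1+\sqrt kR)}V(B_R)^{-2/n}R^2$) is combined with the parabolic embedding to raise the exponent from $p+1$ to $(p+1)(1+\frac2n)$ on shrinking cylinders $Q_{R_j}$, $R_j=\frac R2+\frac R{2^{j+1}}$. Moser iteration then gives $\sup_{Q_{R/2}}w\leqslant C\bigl(\frac{1}{|Q_R|}\iint_{Q_R}w^{p_0}\bigr)^{1/p_0}$. To start the iteration we need an $L^{p_0}$ bound on $w$ with $p_0\sim C(n)(1+\sqrt kR)$. We obtain it from the absorbing term $c\,p\,w^{p+2}$: on the set where $w\geqslant C_1\frac{(1+\sqrt kR)^2}{R^2}$ it dominates the right-hand side, and on the complement $w$ is already bounded by that quantity. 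Together these give $w^{1/2}\leqslant C\frac{1+\sqrt kR}{R}$. The bounded term $v^{-\frac{2-m}{4(1-m)}}\leqslant C(v_{\min})\leqslant C\frac{1+\sqrt kR}{R}$ uses $R\leqslant1$, and this yields (\ref{equ;62}).

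The main obstacle is the pointwise computation of $\mathcal L w$: choosing $\beta$ and the Young splittings so that the coefficient of $w^2$ is strictly positive throughout $1-\frac2n<m<1$. The delicate part is the cross term $\langle\nabla v,\nabla w\rangle$ generated by the degenerate diffusion coefficient $(1-m)v$. We would first try the Bochner/refined Kato decomposition from the PME case, adjusting the signs because $1-m>0$, and only then tune $\beta$ if positivity fails.
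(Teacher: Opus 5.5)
Your outline follows the paper's proof. It uses the same weight $\beta=\frac{2-m}{2(1-m)}$ and the same Bochner computation with only $|\nabla^2v|^2\geqslant(\Delta v)^2/n$. It runs the same Saloff-Coste/parabolic-embedding Moser scheme, started at $l_0=\delta_0(1+\sqrt{k}R)$, and uses the same level-set splitting for the initial integral.

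The pointwise step you call the main obstacle is routine. The coefficient of $v^{-\beta-1}|\nabla v|^4$ is $A=(2-m)\beta-(1-m)\beta^2-\frac{n(1-m)}{2}=\frac{(2-m)^2}{4(1-m)}-\frac{n(1-m)}{2}$. This is positive iff $m>1-\frac{1}{\sqrt{2n}-1}$, which follows from $m>1-\frac{2}{n}$ because $(\sqrt{n}-\sqrt{2})^2\geqslant0$. The cross term has coefficient $-2(1-(1-m)\beta)=-m$ and is absorbed in the integral step, so no matrix lemma or retuning is needed. Your pressure equation also has a stray $+a$; it should read $v_t=(1-m)v\Delta v-|\nabla v|^2+av\log v+b'v$ with $b'=(m-1)b-a\log\frac{m}{1-m}$, though this changes nothing.

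The step that does not work as you wrote it is the start of the iteration. Your Caccioppoli inequality puts $Cp^2\frac{(1+\sqrt{k}R)^2}{R^2}\iint w^{p+1}$ on the right, with no cut-off weight, against $cp\iint\eta^2w^{p+2}$ on the left. With that bookkeeping the level-set absorption fails where $\eta$ is small. Even where $\eta=1$, it pushes the threshold up to $p_0\frac{(1+\sqrt{k}R)^2}{R^2}$, which only yields $w^{1/2}\lesssim(1+\sqrt{k}R)^{3/2}/R$.

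The paper avoids this by treating two kinds of terms separately.
\begin{itemize}
\item The curvature and coefficient terms carry $\zeta^2$. Once the Dirichlet term is normalized, they come with the same single factor $l$ as the good term $Al\iint\zeta^2v^{\beta-1}F^{l+2}$. Since $k\leqslant l_0^2/(\delta_0R)^2$ and $1\leqslant R^{-2}$, the threshold is only $\sim l_0^2/R^2$.
\item The cut-off derivative terms carry no $\zeta^2$. For these the paper takes $\zeta=\xi^{1/2}\eta$ with $\xi=\widetilde{\xi}^{\,l_0+2}$ and $\eta=\widetilde{\eta}^{\,l_0+2}$, so that $|\partial_t\zeta^2|+|\nabla\zeta|^2\lesssim\frac{l_0^2}{R^2}(\zeta^2)^{\frac{l_0+1}{l_0+2}}$. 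H\"older and Young then absorb them into the good term. The cost has $(l_0+1)$-th root $O(l_0^2/R^2)$, up to factors $(e^{l_0}R^2|B_R|)^{O(1/l_0)}$ that combine with the Moser constant into a bounded quantity.
\end{itemize}

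Finally, it is cleaner to iterate $F=v^{-\beta}(|\nabla v|^2+1)$ as the paper does. Because $f\geqslant1$, every lower-order term (e.g. $|\nabla b|^2v^{1-\beta}$ and $v^{1-\beta}\log v\,\langle\nabla v,\nabla a\rangle$) becomes a multiple of $v^{-\beta-1}f$, so the integral inequalities stay homogeneous. With your $w$, the constant term leaves $\iint\eta^2w^p$, which $\iint w^{p+1}$ does not control unless you add a constant anyway.
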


We can also derive the following Liouville type result for FDE:

\begin{corollary}\label{cor;R4}
Let $(M,g)$ be a noncompact Riemannian manifold of dimension $n\geqslant2$ without boundary and with non-negative Ricci curvature. Suppose that $u$ is a positive smooth solution to (\ref{equ;1}) on $M\times\mathbb{R}$, where $1-\frac{1}{\sqrt{n-1}}<m<1$ and $a,b$ are constants. Suppose also that $u$ is bounded from above. In parts (1), (2) and (4), assume in addition that $u$ is bounded from below by a positive constant when
\begin{equation*}
 2\leqslant n\leqslant4,\qquad
 1-\frac1{\sqrt{n-1}}<m\leqslant\frac{n-1}{n+3}.
\end{equation*}
Then:
\begin{flushleft}
(1) When $a=b=0$, $u$ must be a positive constant;

(2) When $a=0$ and $b\neq0$, if one of the following conditions holds,
\begin{equation*}
\begin{aligned}
 \frac{n-1}{n+3}<m<1&,\quad (\text{this is trivial if }n=5),\\
 1-\frac1{\sqrt{n-1}}<m\leqslant\frac{n-1}{n+3}&,\quad 2\leqslant n\leqslant4,\quad b>0,\\
 1-\frac1{\sqrt{n-1}}<m\leqslant\frac{n-1}{n+3}&,\quad n>5,\quad b<0,
\end{aligned}
\end{equation*}
then such $u$ does not exist;

(3) When $a>0$,

$\quad$(i) If $2\leqslant n\leqslant4$ and $m<1/2$, then $u$ cannot be bounded from below by $e^{-2/(2m-1)}e^{-b/a}$. That is,
\begin{equation*}
 \inf_{(x,t)\in M\times\mathbb{R}}u(x,t)
 <e^{-\frac2{2m-1}}e^{-\frac ba};
\end{equation*}

$\quad$(ii) If $2\leqslant n\leqslant4$, $m>1/2$, or $n\geqslant5$, then $u$ cannot be bounded from above by $e^{-2/(2m-1)}e^{-b/a}$. That is,
\begin{equation*}
 \sup_{(x,t)\in M\times\mathbb{R}}u(x,t)
 >e^{-\frac2{2m-1}}e^{-\frac ba}.
\end{equation*}
No independent positive lower bound for $u$ is required in part (3).

(4) When $a<0$, if one of the following conditions holds,
\begin{alignat*}{3}
2\leqslant &n\leqslant4,&\hspace{10pt}\quad \frac{n-1}{n+3}&<m<1,&\hspace{10pt} \\*
2\leqslant &n\leqslant4,&\hspace{10pt}\quad 1-\frac1{\sqrt{n-1}}&<m\leqslant\frac{n-1}{n+3},
 &\hspace{10pt}\quad u\leqslant e^{-\frac2{2m-1}}e^{-\frac ba}, \\
&n=5,&\hspace{10pt}\quad \frac12&<m<1,&\hspace{10pt} \\
&n>5,&\hspace{10pt}\quad \frac{n-1}{n+3}&<m<1,&\hspace{10pt} \\*
&n>5,&\hspace{10pt}\quad 1-\frac1{\sqrt{n-1}}&<m\leqslant\frac{n-1}{n+3},
 &\hspace{10pt}\quad u\geqslant e^{-\frac2{2m-1}}e^{-\frac ba},
\end{alignat*}
then $u$ is spatially constant and there exists a constant $C_0\geqslant0$ such that
\begin{equation*}
 u(x,t)=\exp\left(-\frac ba-C_0e^{at}\right)
 \qquad\text{on }M\times\mathbb{R}.
\end{equation*}
Moreover, $C_0=0$ if and only if $u$ is bounded from below by a positive constant. In particular, $u\equiv e^{-b/a}$ in the second and fifth cases above.
\end{flushleft}
\end{corollary}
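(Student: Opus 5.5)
The plan is to derive Corollary \ref{cor;R4} from the constant-coefficient FDE gradient estimate, the analogue of Theorem \ref{thm;2} for $0<m<1$. In that setting the valid range widens to $1-\frac{1}{\sqrt{n-1}}<m<1$, the restriction $R\leqslant 1$ disappears, and, crucially, the constant $C$ depends only on $m,n$ and on the sign and size data of $a,b$. Writing $v=mu^{m-1}/(1-m)$, I will take $k=0$ and let $R\to\infty$ in an estimate of the form
\begin{equation*}
\sup_{Q_{R/2}} v^{-\frac{2-m}{4(1-m)}}\left|\nabla v\right|\leqslant C\left(\frac{1}{R}+\text{(zero-order terms coming from }a,b)\right).
\end{equation*}
The key structural point is the following. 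In the Bochner/Nash--Moser computation for $w=v^{-\frac{2-m}{2(1-m)}}|\nabla v|^2$, the coefficients $a,b$ enter through the term $a\,u\log u+bu$. After the substitution this becomes a term proportional to $a\big(\log u+\frac{2}{2m-1}+\frac ba\big)$ multiplying $w$. Note that $\log u+\frac{2}{2m-1}+\frac ba\lessgtr 0$ exactly when $u\lessgtr e^{-2/(2m-1)}e^{-b/a}$, which explains the thresholds in the statement. Whenever this term has a favorable sign, it can be dropped and the estimate becomes $|\nabla v|\lesssim 1/R$. Letting $R\to\infty$ then gives $\nabla u\equiv 0$, so $u=u(t)$ solves the ODE $u'=au\log u+bu$ on $\mathbb{R}$.

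The argument then proceeds case by case, each time aiming at a contradiction from the ODE or an explicit solution of it. For (1), $a=b=0$ gives $u'=0$, so $u$ is constant. For (2), $u'=bu$ forces $u=ce^{bt}$, which is unbounded above as $t\to+\infty$ if $b>0$. If $b<0$, then $u$ blows up as $t\to-\infty$, and the upper bound is violated again. So in either case no such $u$ exists, provided that spatial constancy was available. The conditions listed in (2) are exactly those under which the sign of the $b$-term (here the factor $(n+3)m-(n-1)$ appears from the coefficient computation) is favorable. For (4), with $a<0$ and $w=\log u+\frac ba$, the ODE becomes $w'=aw$. Hence $w=-C_0e^{at}$, and since $a<0$ this decays as $t\to+\infty$. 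Boundedness of $u$ from above as $t\to-\infty$ forces $w\leqslant$ const, which rules out $C_0<0$ (that would give $w\to+\infty$). So $C_0\geqslant 0$, and $u$ is bounded below iff $C_0=0$. In the second and fifth subcases the one-sided bound $u\lessgtr e^{-2/(2m-1)}e^{-b/a}$ combined with $t\to-\infty$ forces $C_0=0$. For (3), with $a>0$, I argue by contradiction. Assume the stated one-sided bound on $u$. Then the $a$-term has a favorable sign, so $u=u(t)$ and $w'=aw$ with $w=-C_0e^{at}$. But $u$ lies on one side of the threshold, i.e.\ $w$ is bounded above or below by $-\frac{2}{2m-1}$, for all $t$, and this is incompatible with $w\to 0$ as $t\to-\infty$ once the direction of the inequality is tracked according to the sign of $2m-1$. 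This tracking is what separates (i) from (ii).

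The main obstacle is the bookkeeping of signs in the zero-order coefficient produced by the Nash--Moser/Bochner calculation. One must determine, for each range of $n$ and $m$ (the cutoffs $\frac{n-1}{n+3}$, $m=\frac12$, and $n\leqslant4$ versus $n\geqslant5$), when the terms involving $a$ and $b$ can be discarded. One must also handle the cases where a positive lower bound for $u$ is needed so that the constant does not depend on $v_{\max}$ (recall that $v\sim u^{m-1}$ and $m<1$, so small $u$ means large $v$). I would first redo that computation carefully, isolating the coefficient of $w$ as a function of $a,b,\log u$. Only after that would I match each listed case to a favorable sign before letting $R\to\infty$.
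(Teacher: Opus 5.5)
You have the ODE analysis after spatial constancy right. For the subcases governed by the threshold $e^{-2/(2m-1)}e^{-b/a}$ (second and third cases of (2), part (3), second and fifth cases of (4)), your plan matches the paper's use of Theorem \ref{thm;4}. The gap is that one estimate of this type cannot give spatial constancy in all cases of the statement.

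For $F=v^{-\beta}|\nabla v|^2$ the reaction term is $\bigl[(2-\beta)(a\log v+b')+2a\bigr]F$. Since $a\log v+b'=(m-1)(a\log u+b)$, your threshold appears only for $\beta=\frac{1}{1-m}$, where the coefficient is $(2m-1)(a\log u+b)+2a$. The exponent $\beta=\frac{2-m}{2(1-m)}$ you wrote would give $\frac{3m-2}{2}(a\log u+b)+2a$ instead. With this weight, $a=0$ is favorable only if $(2m-1)b\leqslant0$. The following cases are therefore out of reach:
\begin{itemize}
\item $n=5$, $\frac12<m<1$, $a=0$, $b>0$, which belongs to the first case of (2);
\item the first, third and fourth cases of (4), where $a<0$ and neither a threshold bound nor a positive lower bound is assumed (solutions with $C_0>0$, hence $\inf u=0$, genuinely occur);
\item part (1) for $\frac{n-1}{n+3}<m<\frac12$, where the threshold estimate needs a lower bound on $u$ that is not assumed.
\end{itemize}
Your claim that the factor $(n+3)m-(n-1)$ encodes the sign of the $b$-term is not correct.

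The missing idea is a second estimate, Theorem \ref{thm;5}: take $\beta=2$ and $\varepsilon=1-3m$ in (\ref{equ;64}), i.e.\ estimate $|\nabla v|/v$.
\begin{itemize}
\item Since $2-\beta=0$, the term $a\log v+b'$ drops out of the differential inequality. Only $2aF\leqslant0$ remains when $a\leqslant0$, whatever the sign of $b$ or the size of $\log u$.
\item The restriction $m>\frac{n-1}{n+3}$ is exactly the positivity of the leading coefficient $A=2-\frac{(n+3)(1-m)}{2}$.
\item The only surviving trace of $a\log v+b'$ comes from the time derivative of the weight $v^{-1}$ and carries a factor $1/p$. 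It is bounded by $\frac Lp H^p$ with $L=\sup_{s\geqslant v_0}|a\log s+b'|/s<\infty$, which uses only $u\leqslant M$.
\item This residual disappears in the high-power iteration $J_{p+1}\leqslant T(p)J_p$, because $T(p)\to 2K/A$. The same device absorbs the analogous residual in Theorem \ref{thm;4}, where (\ref{equ;52}) is non-strict.
\end{itemize}
The result is $|\nabla v|/v\leqslant C(m,n,M)(1+\sqrt kR)/R$ with no dependence on $\inf u$. Letting $R\to\infty$ then gives spatial constancy in the cases listed above.

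Finally, in the second case of (4) you cannot get $C_0=0$ from the threshold bound and $t\to-\infty$. There $2m-1<0$, so $u\leqslant e^{-2/(2m-1)}e^{-b/a}$ is an upper bound lying above $e^{-b/a}$, fully compatible with $\exp(-b/a-C_0e^{at})\to0$. What forces $C_0=0$ is the separately assumed positive lower bound for $2\leqslant n\leqslant4$, $m\leqslant\frac{n-1}{n+3}$. That same lower bound is what makes Theorem \ref{thm;4} applicable when $m<\frac12$. Only in the fifth case does the threshold bound itself act as a lower bound.
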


We study these equations by integral methods. The PME estimates and the variable-coefficient FDE estimates use Nash--Moser iteration and Saloff--Coste's Sobolev inequalities. For the remaining constant-coefficient FDE estimates, we localize the gradient quantity before testing and use the resulting highest-order absorption to pass to the high-power limit.

\vspace{.1in}

The subsequent sections are organized as follows: In Section \ref{PME}, we will prove the Hamilton-type gradient estimates and the corollaries for PME; In Section \ref{FDE}, we will prove the corresponding results for FDE.

\vspace{.2in}

\section{PME Type Equations}\label{PME}

\vspace{.1in}

Let $v=mu ^{m-1}/(m-1)$, then direct computation yields
\begin{equation}\label{equ;2}
v_{t}-(m-1)v\Delta v=\left|\nabla v\right|^{2}+av\log v+b'v, 
\end{equation}
where
\begin{equation*}
b'=(m-1)b-a\log \frac{m}{m-1}. 
\end{equation*}
We introduce the differential operator from \cite{L}
\begin{equation*}
\mathcal{L}:=\frac{\partial }{\partial t}-(m-1)v\Delta . 
\end{equation*}
As preparation, we record some formulas of $\mathcal{L}$.
\begin{lemma}\label{lemma;1}
Let $u$ be a positive solution to (\ref{equ;1}) on Riemannian manifold $(M, g)$ of dimension $n$ for some $m>1$. Let $v=mu ^{m-1}/(m-1)$ and $\beta $ be any constant ($\beta \neq 1$). Then we have
\begin{equation}\label{equ;3}
\mathcal{L}(v)=\left|\nabla v\right|^{2}+av\log v+b'v, 
\end{equation}
\begin{equation}\label{equ;4}
\mathcal{L}(v^{-\beta })=-\beta v^{-\beta -1}(1+(m-1)(\beta +1))\left|\nabla v\right|^{2}-\beta v^{-\beta }(a\log v+b'), 
\end{equation}
\begin{equation}\label{equ;5}
\begin{aligned}
\mathcal{L}(\left|\nabla v\right|^{2})&=-2(m-1)v\left|\nabla ^{2}v\right|^{2}-2(m-1)v\mathrm{Ric}\,(\nabla v, \nabla v)+2(m-1)\Delta v \left|\nabla v\right|^{2}\\
&\quad +2\left<\nabla v, \nabla \left|\nabla v\right|^{2} \right>+2(a\log v+b')\left|\nabla v\right|^{2}+2v\log v\left<\nabla v, \nabla a \right>\\
&\quad +2a \left|\nabla v\right|^{2}+2v\left<\nabla v, \nabla b' \right>.
\end{aligned}
\end{equation}

\begin{proof}
    We only prove (\ref{equ;5}). By Bochner formula we have
    \begin{align*}
&\quad \mathcal{L}(\left|\nabla v\right|^{2}) \\*
&\quad =\frac{\partial }{\partial t}\left|\nabla v\right|^{2}-(m-1)v\Delta \left|\nabla v\right|^{2} \\
&\quad =2\left< \nabla v, \nabla v_{t}\right>-2(m-1)v\left|\nabla ^{2}v\right|^{2}-2(m-1)v\left<\nabla v, \nabla (\Delta v) \right>-2(m-1)v\mathrm{Ric}\,(\nabla v, \nabla v) \\
&\quad =2\left<\nabla v, \nabla \mathcal{L}(v) \right>+2(m-1)\Delta v\left|\nabla v\right|^{2}-2(m-1)v\left|\nabla ^{2}v\right|^{2}-2(m-1)v\mathrm{Ric}\,(\nabla v, \nabla v) \\
&\quad =-2(m-1)v\left|\nabla ^{2}v\right|^{2}-2(m-1)v\mathrm{Ric}\,(\nabla v, \nabla v)+2(m-1)\Delta v\left|\nabla v\right|^{2} \\
&\quad \quad +2\left<\nabla v, \nabla \left|\nabla v\right|^{2} \right>+2(a\log v+b')\left|\nabla v\right|^{2}+2v\log v\left<\nabla v, \nabla a \right> \\*
&\quad \quad +2a \left|\nabla v\right|^{2}+2v\left<\nabla v, \nabla b' \right>.
\end{align*}
\end{proof}
\end{lemma}

We define 
\begin{equation}\label{equ;67}
    n'=\begin{cases} n, & n\geqslant 3, \\
    4 , & n=2.
    \end{cases}
\end{equation}
\begin{lemma}\label{lemma;2}
Let $(M, g)$ be a complete Riemannian manifold of dimension $n$, $Q_{R}=B_{R}(x_{0})\times (t_{0}-R^{2}, t_{0}]\subset M\times \mathbb{R}$. Suppose that $\varphi =\varphi (x, t)$ is a function which satisfies
\begin{equation}\label{equ;61}
X \sup_{t_{0}-\rho ^{2}<t\leqslant t_{0}}\int_{B_{\rho }}\varphi ^{2}(x, t)+Y\int_{t_{0}-\rho ^{2}}^{t_{0}}\left( \int_{B_{\rho }}\varphi ^{\frac{2n'}{n'-2}} \right)^{\frac{n'-2}{n'}}  d t\leqslant S, 
\end{equation}
where $B_{\rho }=B_{\rho }(x_{0})$ and $X, Y, S, \rho $ are constants, $0<\rho \leqslant R$. Then
\begin{equation*}
\int_{Q_{\rho }}\varphi ^{\frac{2(n'+2)}{n'}}\leqslant \frac{S^{(n'+2)/n'}}{X^{2/n'}Y}, 
\end{equation*}
where $Q_{\rho }=B_{\rho }(x_{0})\times (t_{0}-\rho ^{2}, t_{0}]\subset Q_{R}$. Equivalently, 
\begin{equation*}
\left\|\varphi \right\|_{L^{\frac{2(n'+2)}{n'}}(Q_{\rho })}^{2}\leqslant X^{-\frac{2}{n'+2}}Y^{-\frac{n'}{n'+2}}S. 
\end{equation*}

\begin{proof}
    We assume that $n\geqslant 3$. The following argument is also vaild as long as $n$ is replaced by $n'=4$ when $n=2$.
    
    For $t_{0}-\rho ^{2}<t\leqslant t_{0}$, we use H\"{o}lder's inequality to deduce that
    \begin{equation*}
    \begin{aligned}
    \int_{B_{\rho }}\varphi ^{\frac{2(n+2)}{n}}&=\int_{B_{\rho }}\varphi ^{2}\cdot \varphi ^{\frac{4}{n}}\leqslant \left( \int_{B_{\rho }}\varphi ^{\frac{2n}{n-2}} \right) ^{\frac{n-2}{n}}\left( \int_{B_{\rho }}\varphi ^{2} \right) ^{\frac{2}{n}}\\
    &\leqslant \sup_{t_{0}-\rho ^{2}<t\leqslant t_{0}}\left( \int_{B_{\rho }}\varphi ^{2} \right) ^{\frac{2}{n}}\left( \int_{B_{\rho }}\varphi ^{\frac{2n}{n-2}} \right) ^{\frac{n-2}{n}}. 
    \end{aligned}
    \end{equation*}
    Integrating over $t_{0}-\rho ^{2}<t\leqslant t_{0}$ yields
    \begin{equation*}
    \int_{Q_{\rho }}\varphi ^{\frac{2(n+2)}{n}}\leqslant \sup_{t_{0}-\rho ^{2}<t\leqslant t_{0}}\left( \int_{B_{\rho }}\varphi ^{2} \right) ^{\frac{2}{n}}\int_{t_{0}-\rho ^{2}}^{t_{0}}\left( \int_{B_{\rho }}\varphi ^{\frac{2n}{n-2}} \right) ^{\frac{n-2}{n}}  d t. 
    \end{equation*}
    Applying the inequality in the assumption (\ref{equ;61}), we obtain
    \begin{equation*}
    \int_{Q_{\rho }}\varphi ^{\frac{2(n+2)}{n}}\leqslant \left( \frac{S}{X} \right) ^{\frac{2}{n}}\frac{S}{Y}= \frac{S^{(n+2)/n}}{X^{2/n}Y}, 
    \end{equation*}
    which completes the proof. 
\end{proof}
\end{lemma}

We need the following Sobolev inequality:
\begin{lemma}[Saloff-Coste, \cite{S-C}]\label{lemma;3}
Let $(M, g)$ be a Riemannian manifold of dimension $n\geqslant 2$ with $\mathrm{Ric}\,_{M}\geqslant -k$ for some $k\geqslant 0$. If $n>2$, there exists a positive constant $C_{n}$ depending only on $n$, such that for any $\varphi \in C_{0}^{\infty}(B_{R})$, where $B_{R}\subset M$ is a geodesic ball, there holds
\begin{equation}\label{equ;sobolev}
e^{-C_{n}(1+\sqrt{k}R)}\left|B_{R}\right|^{2/n}R^{-2}\left\|\varphi \right\|_{L^{\frac{2n}{n-2}}(B_{R})}^{2}\leqslant \int_{B_{R}}\left|\nabla \varphi \right|^{2}+R^{-2}\int_{B_{R}}\varphi ^{2}. 
\end{equation}
If $n=2$, the inequality (\ref{equ;sobolev}) holds with $n$ replaced by  $n'$. 
\end{lemma}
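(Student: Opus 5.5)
This is Saloff-Coste's local Sobolev inequality, quoted from \cite{S-C}. In the paper it is used as a black box, so the natural plan is to recall how it follows from two ingredients: volume doubling under $\mathrm{Ric}\geqslant -k$, and a local Neumann Poincar\'e inequality. The first step is volume comparison. By Bishop--Gromov, for $x\in B_R$ and $0<r\leqslant R$,
\begin{equation*}
\frac{|B_R(x)|}{|B_r(x)|}\leqslant e^{C_n(1+\sqrt{k}R)}\left(\frac{R}{r}\right)^{n}.
\end{equation*}
This gives doubling on $B_R$ with constant $e^{C_n(1+\sqrt{k}R)}$. The second step is Buser's inequality: $\int_{B_r}|\varphi-\varphi_{B_r}|^2\leqslant e^{C_n(1+\sqrt{k}r)}r^2\int_{B_r}|\nabla\varphi|^2$. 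This follows from the segment inequality of Cheeger--Colding, or from Buser's isoperimetric argument.

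The core step, and the main obstacle, is to pass from doubling plus Poincar\'e to a Sobolev inequality with the sharp dependence $e^{C_n(1+\sqrt{k}R)}|B_R|^{2/n}R^{-2}$. I would follow Saloff-Coste's route. First derive, from Poincar\'e and doubling, a pseudo-Poincar\'e inequality $\|\varphi-\varphi_r\|_{L^2}\leqslant Cr\|\nabla\varphi\|_{L^2}$, where $\varphi_r(x)=\fint_{B_r(x)}\varphi$. Doubling and the lower volume bound $|B_r(x)|\geqslant e^{-C(1+\sqrt kR)}(r/R)^n|B_R|$ give $\|\varphi_r\|_{\infty}\leqslant C|B_R|^{-1}(R/r)^n\|\varphi\|_{1}$. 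A Coulhon--Saloff-Coste / Bakry--Coulhon--Ledoux--Saloff-Coste truncation argument then combines these two bounds into a weak-type Sobolev inequality. Applying it to the level-set slices $\min\{(\varphi-2^j)_+,2^j\}$ upgrades weak type to the strong $L^{2n/(n-2)}$ bound, with the term $R^{-2}\int\varphi^2$ absorbing the large-scale part $r\sim R$. Alternatively, I would run a heat-kernel argument. The local Li--Yau Harnack inequality gives the upper bound $p_t(x,x)\leqslant e^{C(1+\sqrt kR)}|B_{\sqrt t}(x)|^{-1}$ for $t\leqslant R^2$. Using $|B_{\sqrt t}(x)|\geqslant e^{-C(1+\sqrt{k}R)}(\sqrt t/R)^n|B_R|$, this becomes a Nash-type ultracontractivity bound $\|e^{t(\Delta-R^{-2})}\|_{1\to\infty}\leqslant e^{C(1+\sqrt kR)}|B_R|^{-1}(R^2/t)^{n/2}$. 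Varopoulos' theorem, which holds for $n>2$, then yields the Sobolev inequality. The delicate points are keeping the constants exponential in $\sqrt kR$ and handling the Dirichlet localization to $B_R$.

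For $n=2$, Varopoulos' theorem fails with exponent $n$. But since $(R/r)^2\leqslant(R/r)^4$ for $r\leqslant R$, the same volume bound holds with exponent $4$ in place of $2$. The argument above then runs verbatim with $n'=4$, which gives the $n=2$ case as stated.
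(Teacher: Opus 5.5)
The paper does not prove this lemma; it quotes it from Saloff-Coste \cite{S-C}. Your main route is the standard argument behind that result and is sound: Bishop--Gromov doubling, Buser's Poincar\'e inequality, the pseudo-Poincar\'e and $L^{1}\to L^{\infty}$ bounds for the averages $\varphi_r$, truncation to pass from weak to strong type, and $n'=4$ for $n=2$. One caveat on your alternative heat-kernel route: Li--Yau alone gives only $p_t(x,x)\leqslant C|B_{\sqrt t}(x)|^{-1}e^{Ckt}$, which is a factor $e^{CkR^{2}}$ for $t\leqslant R^{2}$, not $e^{C(1+\sqrt{k}R)}$. To fix this, apply Li--Yau only up to $t=1/k$, then use that $t\mapsto p_t(x,x)$ is nonincreasing together with Bishop--Gromov to compare $|B_{\sqrt t}(x)|$ with $|B_{1/\sqrt{k}}(x)|$.
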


Now we can prove Theorem \ref{thm;R1}. 

\begin{proof}[proof of Theorem \ref{thm;R1}]
    Throughout the Sobolev--Moser part of the proof, we set
\begin{equation*}
     n'=n\quad(n\geqslant3),\qquad n'=4\quad(n=2).
\end{equation*}
Thus the spatial Sobolev exponent, the parabolic exponent and the Moser amplification factor are $2n'/(n'-2)$, $2(n'+2)/n'$ and $(n'+2)/n'$, respectively.  The actual geometric dimension remains $n$ in the Bochner formula, the Hessian inequality and all algebraic coefficients. We only prove the theorem for $n\geqslant 3$. The argument is also valid when $n=2$ as long as all the $n$ in these exponents are replaced by $n'$.

    We write $b'$ as $b$ for simplicity. For any $\displaystyle{\frac{R}{2}\leqslant \rho \leqslant R}$ and $t_{0}-\rho ^{2}<\tau \leqslant t_{0}$, we denote
    \begin{equation*}
    Q_{\rho }=B_{\rho }\times (t_{0}-\rho ^{2}, t_{0}] \quad \text{and}\quad Q_{\rho , \tau }=B_{\rho }\times (t_{0}-\rho ^{2}, \tau ]. 
    \end{equation*}
    Let
    \begin{equation*}
    f=\left|\nabla v\right|^{2}+1 \quad \text{and}\quad F=v^{-\beta }f=\frac{\left|\nabla v\right|^{2}+1}{v^{\beta }}, 
    \end{equation*}
    where $\beta $ is a (negative) constant to be determined later. Applying Lemma \ref{lemma;1} we have
    \begin{align*}
\mathcal{L}(F)&=f\mathcal{L}(v^{-\beta })+v^{-\beta }\mathcal{L}(f)-2(m-1)v\left<\nabla v^{-\beta },\nabla f \right> \\*
&=-\beta v^{-\beta -1}(1+(m-1)(\beta +1))f(f-1)-\beta v^{-\beta }(a\log v+b)f \\
&\quad -2(m-1)v^{-\beta +1}\left|\nabla ^{2}v\right|^{2}-2(m-1)v^{-\beta +1}\mathrm{Ric}\,(\nabla v, \nabla v) \\
&\quad +2(m-1)v^{-\beta }(f-1)\Delta v+2v^{-\beta }\left<\nabla v, \nabla f \right>+2(a\log v+b)v^{-\beta }(f-1) \\
&\quad +2v^{-\beta +1}\log v\left<\nabla v, \nabla a \right>+2av^{-\beta }(f-1)+2v^{-\beta +1}\left<\nabla v, \nabla b \right> \\*
&\quad +2(m-1)\beta v^{-\beta }\left<\nabla v, \nabla f \right>.
\end{align*}
    Note that $\displaystyle{\left|\nabla ^{2}v\right|^{2}\geqslant \frac{(\Delta v)^{2}}{n}}$ and $\mathrm{Ric}\,(\nabla v, \nabla v)\geqslant -k\left|\nabla v\right|^{2}$. One obtains
    \begin{align}
-\mathcal{L}(F)&\geqslant \frac{2(m-1)}{n}v^{-\beta +1}(\Delta v)^{2}-2(m-1)v^{-\beta }(f-1)\Delta v-2(m-1)kv^{-\beta +1}(f-1) \label{equ;7} \\*
&\quad +\beta (1+(m-1)(\beta +1))v^{-\beta -1}f^{2}-2(1+\beta (m-1))v^{-\beta }\left<\nabla v, \nabla f \right> \notag \\
&\quad -(2-\beta )(a\log v+b)v^{-\beta }f-2v^{-\beta +1}\log v\left<\nabla v, \nabla a \right>-2av^{-\beta }f+2av^{-\beta } \notag \\*
&\quad -2v^{-\beta +1}\left<\nabla v, \nabla b \right>-\beta (1+(m-1)(\beta +1))v^{-\beta -1}f+2(a\log v+b)v^{-\beta }. \notag
\end{align}
    By Cauchy-Schwarz inequality, 
    \begin{equation}\label{equ;8}
    \frac{2(m-1)}{n}v^{-\beta +1}(\Delta v)^{2}-2(m-1)v^{-\beta }(f-1)\Delta v\geqslant -\frac{n(m-1)}{2}v^{-\beta -1}(f-1)^{2}, 
    \end{equation}
    \begin{equation}\label{equ;9}
    \begin{aligned}
    -2v^{-\beta +1}\log v\left<\nabla v, \nabla a \right>&\geqslant -2v^{-\beta +1}\left|\log v\right|\left|\nabla v\right|\left|\nabla a\right|\\
    &\geqslant -\max\left\{ e^{-1},v_{\max}\left|\log v_{\max}\right| \right\} v^{-\beta }(\left|\nabla v\right|^{2}+\left|\nabla a\right|^{2})\\
    &\geqslant -C_{1}v^{-\beta }(f+\left|\nabla a\right|^{2}), 
    \end{aligned}
    \end{equation}
    \begin{equation}\label{equ;10}
    -2v^{-\beta +1}\left<\nabla v, \nabla b \right>\geqslant -v^{-\beta +1}(\left|\nabla v\right|^{2}+\left|\nabla b\right|^{2})\geqslant -v^{-\beta +1}f-\left|\nabla b\right|^{2}v^{-\beta +1}.
    \end{equation}
    Moreover, since $f=v^{\beta }F$, we have
    \begin{equation}\label{equ;11}
    v^{-\beta }\left<\nabla v, \nabla f \right>=\beta v^{-\beta -1}f^{2}-\beta v^{-\beta -1}f+\left<\nabla F, \nabla v \right>.
    \end{equation}
    Substituting (\ref{equ;8})$\sim $(\ref{equ;11}) into $(\ref{equ;7})$ gives
    \begin{align*}
-\mathcal{L}(F)&\geqslant \left\{ -(m-1)\beta ^{2}+(m-2)\beta -\frac{n(m-1)}{2} \right\} v^{-\beta -1}f^{2}+n(m-1)v^{-\beta -1}f \\*
&\quad -\frac{n(m-1)}{2}v^{-\beta -1}-2(1+\beta (m-1))\left<\nabla F, \nabla v \right>-2(m-1)kv^{-\beta +1}f \\
&\quad +2(m-1)kv^{-\beta +1}+2\beta (1+\beta (m-1))v^{-\beta -1}f-C_{1}v^{-\beta }(f+\left|\nabla a\right|^{2}) \\
&\quad -2\left|a\right|v^{-\beta }f-v^{-\beta +1}f-\left|\nabla b\right|^{2}v^{-\beta +1}-(2-\beta )\left( \frac{C_{1}\left|a\right|}{v}+\left|b\right| \right) v^{-\beta }f \\*
&\quad -\beta (1+(m-1)(\beta +1))v^{-\beta -1}f-2\left( \frac{C_{1}\left|a\right|}{v}+\left|b\right| \right) v^{-\beta }-2\left|a\right|v^{-\beta },
\end{align*}
    where $C_{1}=\max\left\{ e^{-1}, v_{\max}\left|\log v_{\max}\right| \right\} $. We derive from this inequality and $f\geqslant 1$ that
    \begin{align}
-\mathcal{L}(F)&\geqslant \left\{ -(m-1)\beta ^{2}+(m-2)\beta -\frac{n(m-1)}{2} \right\} v^{-\beta -1}f^{2} \label{equ;12} \\*
&\quad -\left\{ -(m-1)\beta ^{2}+(m-2)\beta -\frac{n(m-1)}{2}+(4-\beta )C_{1}\left|a\right| \right\} v^{-\beta -1}f \notag \\
&\quad -\left\{ C_{1}(1+\left|\nabla a\right|^{2})+4\left|a\right|+v+\left|\nabla b\right|^{2}v+(4-\beta )\left|b\right| \right\} v^{-\beta }f \notag \\*
&\quad -2(1+\beta (m-1))\left<\nabla F, \nabla v \right>-2(m-1)kv^{-\beta +1}f. \notag
\end{align}
    
    Now we take $\displaystyle{\beta =\beta (m, n)=-\frac{1}{2}\frac{2-m}{m-1}<0}$. Since $1<m<1+\frac{1}{1+\sqrt{2n}}$, it is easy to check that 
    \begin{equation*}
    A=-(m-1)\beta ^{2}+(m-2)\beta -\frac{n(m-1)}{2}>0. 
    \end{equation*}
    Hence we conclude from (\ref{equ;12}) that 
    \begin{equation}\label{equ;13}
    \mathcal{L}(F)+Av^{-\beta -1}f^{2}\leqslant C_{3}v^{-\beta -1}f+2(1+\beta (m-1))\left<\nabla F, \nabla v \right>+2(m-1)kv^{-\beta +1}f, 
    \end{equation}
    where $C_{3}=C_{3}(m, n, \beta , D_{1}, D_{2}, D_{3}, D_{4}, v_{\max})$ and $C_{3}\geqslant A$. 
    
    Next, let $\zeta $ be a cut-off function supported on $Q_{\rho }$. For any $l\geqslant 0$ we multiply by $\zeta ^{2}F^{l}$ on both sides of (\ref{equ;13}) and integrate on $Q_{\rho , \tau }$, then
    \begin{equation*}
    \begin{aligned}
    &\int \zeta ^{2}F^{l}F_{t}-(m-1)\int \zeta ^{2}F^{l}v\Delta F+A\int \zeta ^{2}v^{-\beta -1}F^{l}f^{2}\\
    &\quad \leqslant C_{3}\int \zeta ^{2}v^{-\beta -1}F^{l}f+2(1+\beta (m-1))\int \zeta ^{2}F^{l}\left<\nabla F, \nabla v \right>+2(m-1)k\int \zeta ^{2}v^{-\beta +1}F^{l}f. 
    \end{aligned}
    \end{equation*}
    Integrating by parts implies
    \begin{align*}
&\int \zeta ^{2}F^{l}F_{t}-(m-1)\int\zeta ^{2}F^{l}v\Delta F \\*
&\quad =\frac{1}{l+1}\int\zeta ^{2}(F^{l+1})_{t}+(m-1)\int\left<\nabla F, \nabla (\zeta ^{2}F^{l}v) \right> \\
&\quad =\frac{1}{l+1}\int_{B_{\rho }}\zeta ^{2}F^{l+1}(x, \tau )-\frac{1}{l+1}\int\frac{\partial \zeta ^{2}}{\partial t}F^{l+1} \\*
&\quad \quad +(m-1)\left\{ \int l\zeta ^{2}vF^{l-1}\left|\nabla F\right|^{2}+\int \zeta ^{2}F^{l}\left<\nabla F, \nabla v \right>+\int 2\zeta vF^{l}\left<\nabla F, \nabla \zeta  \right> . \right\}
\end{align*}
    Hence 
    \begin{equation}\label{equ;14}
    \begin{aligned}
    &\frac{1}{l+1}\int_{B_{\rho }}\zeta ^{2}F^{l+1}(x, \tau )+(m-1)l\int \zeta ^{2}vF^{l-1}\left|\nabla F\right|^{2}+A\int \zeta ^{2}v^{\beta -1}F^{l+2}\\
    &\quad \leqslant \frac{1}{l+1}\int \frac{\partial \zeta ^{2}}{\partial t}F^{l+1}+C_{3}\int \zeta ^{2}v^{-1}F^{l+1}+\int \zeta ^{2}F^{l}\left<\nabla F, \nabla v \right>\\
    &\quad \quad -2(m-1)\int \zeta vF^{l}\left<\nabla F, \nabla \zeta  \right>+2(m-1)k\int \zeta ^{2}vF^{l+1}.
    \end{aligned}
    \end{equation}
    By Cauchy-Schwarz inequality, 
    \begin{equation}\label{equ;15}
    \int \zeta ^{2}F^{l}\left<\nabla F, \nabla v \right>\leqslant \frac{(m-1)l}{4}\int \zeta ^{2}vF^{l-1}\left|\nabla F\right|^{2}+\frac{1}{(m-1)l}\int \zeta ^{2}v^{-1}F^{l+1}(f-1), 
    \end{equation}
and
    \begin{align}\label{equ;16}
    &-2(m-1)\int \zeta vF^{l}\left<\nabla F, \nabla \zeta  \right>\nonumber\\*
\leqslant &\frac{(m-1)l}{4}\int \zeta ^{2}vF^{l-1}\left|\nabla F\right|^{2}+\frac{4(m-1)}{l}\int vF^{l+1}\left|\nabla \zeta \right|^{2}. 
    \end{align}
    Combining (\ref{equ;15}) and (\ref{equ;16}) with (\ref{equ;14}) we have
    \begin{equation*}
    \begin{aligned}
    &\frac{1}{l+1}\int_{B_{\rho }}\zeta ^{2}F^{l+1}(x, \tau )+\frac{(m-1)l}{2}\int \zeta ^{2}vF^{l-1}\left|\nabla F\right|^{2}+\left\{ A-\frac{1}{(m-1)l} \right\} \int \zeta ^{2}v^{\beta -1}F^{l+2}\\
    &\quad \leqslant \frac{1}{l+1}\int \frac{\partial \zeta ^{2}}{\partial t}F^{l+1}+\left\{ C_{3}-\frac{1}{(m-1)l} \right\} \int \zeta ^{2}v^{-1}F^{l+1}\\
    &\quad \quad +\frac{4(m-1)}{l}\int vF^{l+1}\left|\nabla \zeta \right|^{2}+2(m-1)k\int \zeta ^{2}vF^{l+1}.
    \end{aligned}
    \end{equation*}
    Let $l$ satisfies $l\geqslant 2$ and $\displaystyle{A\geqslant \frac{2}{(m-1)l}}$, then $\displaystyle{C_{3}-\frac{1}{(m-1)l}>\frac{1}{2}A>0}$. We deduce further that
    \begin{equation}\label{equ;17}
    \begin{aligned}
    &\frac{1}{l+1}\int_{B_{\rho }}\zeta ^{2}F^{l+1}(x, \tau )+\frac{(m-1)l}{2}\int \zeta ^{2}vF^{l-1}\left|\nabla F\right|^{2}+\frac{1}{2}A\int \zeta ^{2}v^{\beta -1}F^{l+2}\\
    &\quad \leqslant \frac{1}{l+1}\int \frac{\partial \zeta ^{2}}{\partial t}F^{l+1}+\left\{ C_{3}-\frac{1}{(m-1)l} \right\} \int \zeta ^{2}v^{-1}F^{l+1}\\
    &\quad \quad +\frac{4(m-1)}{l}\int vF^{l+1}\left|\nabla \zeta \right|^{2}+2(m-1)k\int \zeta ^{2}vF^{l+1}.
    \end{aligned}
    \end{equation}
    Since
    \begin{equation*}
    \begin{aligned}
    v\left|\nabla (\zeta F^{(l+1)/2})\right|^{2}&=v\left|\frac{l+1}{2}\zeta F^{(l-1)/2}\nabla F+F^{(l+1)/2}\nabla \zeta \right|^{2}\\
    &\leqslant \frac{(l+1)^{2}}{2}\zeta ^{2}vF^{l-1}\left|\nabla F\right|^{2}+2vF^{l+1}\left|\nabla \zeta \right|^{2}, 
    \end{aligned}
    \end{equation*}
    we have
    \begin{align}\label{equ;18}
   & \frac{(m-1)l}{(l+1)^{2}}\int v\left|\nabla (\zeta F^{(l+1)/2})\right|^{2}\nonumber\\*
\leqslant &\frac{(m-1)l}{2}\int \zeta^{2} vF^{l-1}\left|\nabla F\right|^{2}+\frac{2(m-1)l}{(l+1)^{2}}\int vF^{l+1}\left|\nabla \zeta \right|^{2}. 
    \end{align}
    Substituting (\ref{equ;18}) into (\ref{equ;17}) yields
    \begin{equation*}
    \begin{aligned}
    &\frac{1}{l+1}\int_{B_{\rho }}\zeta ^{2}F^{l+1}(x, \tau )+\frac{(m-1)l}{(l+1)^{2}}\int v\left|\nabla (\zeta F^{(l+1)/2})\right|^{2}+\frac{1}{2}A\int \zeta ^{2}v^{\beta -1}F^{l+2}\\
    &\quad \leqslant \frac{1}{l+1}\int \frac{\partial \zeta ^{2}}{\partial t}F^{l+1}+\left\{ C_{3}-\frac{1}{(m-1)l} \right\} \int\zeta ^{2}v^{-1}F^{l+1}\\
    &\quad \quad +\left\{ \frac{4(m-1)}{l}+\frac{2(m-1)l}{(l+1)^{2}} \right\} \int vF^{l+1}\left|\nabla \zeta \right|^{2}+2(m-1)k\int \zeta ^{2}vF^{l+1}.
    \end{aligned}
    \end{equation*}
    For the function $\varphi =\zeta F^{(l+1)/2}$, we can integrate both sides of Saloff-Coste's Sobolev inequality(see (\ref{equ;sobolev})) on $(t_{0}-\rho ^{2}, \tau ]$ with respect to $t$ and deduce that
    \begin{equation*}
    \begin{aligned}
    &\quad   e^{-C_{n}(1+\sqrt{k}R)}\left|B_{R}\right|^{2/n}R^{-2}\int_{t_{0}-\rho ^{2}}^{\tau }\left\|\zeta F^{(l+1)/2}\right\|_{L^{\frac{2n}{n-2}}(B_{\rho })}^{2}  d t\\
    &\quad \quad\quad \leqslant \int_{Q_{\rho , \tau }}\left|\nabla (\zeta F^{(l+1)/2})\right|^{2}+R^{-2}\int_{Q_{\rho , \tau }}\zeta ^{2}F^{l+1}.
    \end{aligned}
    \end{equation*}
    Then 
    \begin{align*}
&\frac{1}{l+1}\int_{B_{\rho }}\zeta ^{2}F^{l+1}(x, \tau )+\frac{(m-1)lv_{\min}}{(l+1)^{2}}e^{-C_{n}(1+\sqrt{k}R)}\left|B_{R}\right|^{2/n}R^{-2}\int_{t_{0}-\rho ^{2}}^{\tau }\left\|\zeta F^{(l+1)/2}\right\|_{L^{\frac{2n}{n-2}}(B_{\rho })}^{2}  d t \\*
&\quad \quad +\frac{1}{2}A\int \zeta ^{2}v^{\beta -1}F^{l+2} \\
&\quad \leqslant \frac{1}{l+1}\int\frac{\partial \zeta ^{2}}{\partial t}F^{l+1}+\left\{ C_{3}-\frac{1}{(m-1)l} \right\} \int \zeta ^{2}v^{-1}F^{l+1}+\frac{(m-1)lv_{\min}}{(l+1)^{2}}R^{-2}\int\zeta ^{2}F^{l+1} \\*
&\quad \quad +\left\{ \frac{4(m-1)}{l}+\frac{2(m-1)l}{(l+1)^{2}} \right\} \int vF^{l+1}\left|\nabla \zeta \right|^{2}+2(m-1)k\int \zeta ^{2}vF^{l+1}.
\end{align*}
    Now we observe that each term in the above inequality is increasing with respect to $\tau $ unless $\displaystyle{\frac{1}{l+1}\int_{B_{\rho }}\zeta ^{2}F^{l+1}(x, \tau )}$. We first take $\tau =t_{0}$ on RHS of this inequality, and then take the supremum on both sides with respect to $\tau $. Obviously the supremum of LHS is no larger than twice of RHS, namely
    \begin{align*}
&\frac{(m-1)lv_{\min}}{(l+1)^{2}}e^{-C_{n}(1+\sqrt{k}R)}\left|B_{R}\right|^{2/n}R^{-2}\int_{t_{0}-\rho ^{2}}^{t_{0}}\left\|\zeta F^{(l+1)/2}\right\|_{L^{\frac{2n}{n-2}}(B_{\rho })}^{2}  d t \\*
&\quad \quad +\frac{1}{l+1}\sup_{t_{0}-\rho ^{2}\leqslant t\leqslant t_{0}}\int_{B_{\rho }}\zeta ^{2}F^{l+1}(x, t)+\frac{1}{2}A\int_{Q_{\rho }}\zeta ^{2}v^{\beta -1}F^{l+2} \\
&\quad \leqslant \frac{2}{l+1}\int_{Q_{\rho }}\frac{\partial \zeta ^{2}}{\partial t}F^{l+1}+2\left\{ C_{3}-\frac{1}{(m-1)l} \right\} \int_{Q_{\rho }} \zeta ^{2}v^{-1}F^{l+1}+\frac{2(m-1)lv_{\min}}{(l+1)^{2}}R^{-2}\int_{Q_{\rho }}\zeta ^{2}F^{l+1} \\*
&\quad \quad +\left\{ \frac{8(m-1)}{l}+\frac{4(m-1)l}{(l+1)^{2}} \right\} \int_{Q_{\rho }} vF^{l+1}\left|\nabla \zeta \right|^{2}+4(m-1)k\int_{Q_{\rho }} \zeta ^{2}vF^{l+1}.
\end{align*}
    Applying Lemma \ref{lemma;2} to $\zeta F^{(l+1)/2}$, we derive
    \begin{align*}
&\left( \frac{(m-1)lv_{\min}}{(l+1)^{2}}e^{-C_{n}(1+\sqrt{k}R)}\left|B_{R}\right|^{2/n}R^{-2}\ \right) ^{\frac{n}{n+2}}\left\|\zeta F^{(l+1)/2}\right\|_{L^{\frac{2(n+2)}{n}}(Q_{\rho })}^{2} +\frac{1}{2}A(l+1)^{\frac{2}{n+2}}\int_{Q_{\rho }}\zeta ^{2}v^{\beta -1}F^{l+2} \\*
&\quad \leqslant (l+1)^{\frac{2}{n+2}}\Bigg\{ \frac{2}{l+1}\int_{Q_{\rho }}\frac{\partial \zeta ^{2}}{\partial t}F^{l+1}+2\left\{ C_{3}-\frac{1}{(m-1)l} \right\} \int_{Q_{\rho }} \zeta ^{2}v^{-1}F^{l+1} \\
&\quad \quad +\frac{2(m-1)lv_{\min}}{(l+1)^{2}}R^{-2}\int_{Q_{\rho }}\zeta ^{2}F^{l+1}+\left\{ \frac{8(m-1)}{l}+\frac{4(m-1)l}{(l+1)^{2}} \right\} \int_{Q_{\rho }} vF^{l+1}\left|\nabla \zeta \right|^{2} \\*
&\quad \quad +4(m-1)k\int_{Q_{\rho }} \zeta ^{2}vF^{l+1}\Bigg\}.
\end{align*}
    Multiplying by $(l+1)^{\frac{n}{n+2}}$ on both sides and using $l\geqslant 2$, one concludes that
    \begin{equation}\label{equ;19}
    \begin{aligned}
    &\left( v_{\min}e^{-C_{n}(1+\sqrt{k}R)}\left|B_{R}\right|^{2/n}R^{-2} \right)^{\frac{n}{n+2}}\left\|\zeta F^{(l+1)/2}\right\| _{L^{\frac{2(n+2)}{n}}(Q_{\rho })}^{2}+Al\int_{Q_{\rho }}\zeta ^{2}v^{\beta -1}F^{l+2}\\
    &\quad \leqslant C_{4}\bigg\{\int_{Q_{\rho }}\left|\frac{\partial \zeta ^{2}}{\partial t}\right|F^{l+1}+l\int_{Q_{\rho }}\zeta ^{2}v^{-1}F^{l+1}+\int_{Q_{\rho }}vF^{l+1}\left|\nabla \zeta \right|^{2}\\
    &\quad \quad +kl\int_{Q_{\rho }}\zeta ^{2}vF^{l+1}+v_{\min}R^{-2}\int_{Q_{\rho }}\zeta ^{2}F^{l+1}  \bigg\}. 
    \end{aligned}
    \end{equation}
    Define
    \begin{equation*}
    l_{0}=\delta_{0}(1+\sqrt{k}R)\quad ,\quad \delta_{0}=\delta_{0}(m, n)=\max\left\{ C_{n}+2, \frac{2}{(m-1)A}, -\frac{1}{\beta }, n \right\} . 
    \end{equation*}
    Choose $l\geqslant l_{0}$, then
    \begin{equation}\label{equ;20}
    kR^{2}\leqslant \frac{l_{0}^{2}}{\delta_{0}^{2}}\quad \text{and} \quad \frac{v_{\min}}{l}\leqslant \frac{v_{\min}}{\delta_{0}}. 
    \end{equation}
    Combining (\ref{equ;20}) with (\ref{equ;19}) and using $v_{\min}\leqslant v\leqslant v_{\max}$ and $0<R\leqslant 1$, we have
    \begin{align}\label{equ;21}
    &\left( v_{\min}e^{-l_{0}}\left|B_{R}\right|^{2/n}R^{-2} \right) ^{\frac{n}{n+2}}\left\|\zeta F^{(l+1)/2}\right\|_{L^{\frac{2(n+2)}{n}}(Q_{\rho })}^{2}+Al\int_{Q_{\rho }}\zeta ^{2}v^{\beta -1}F^{l+2}\nonumber\\*
    \leqslant& C_{5}\bigg\{\int_{Q_{\rho }}\left( \left|\frac{\partial \zeta ^{2}}{\partial t}\right|+\left|\nabla \zeta \right|^{2} \right)F^{l+1}+lR^{-2}\int_{Q_{\rho }}\zeta ^{2}v^{-1}F^{l+1}+l_{0}^{2}lR^{-2}\int_{Q_{\rho }}\zeta ^{2}F^{l+1}  \bigg\}, 
    \end{align}
    where $C_{5}=C_{5}(m, n, \beta , D_{1}, D_{2}, D_{3}, D_{4}, v_{\max})$. 

    Next, we apply the standard iteration to (\ref{equ;21}). First ignore the second term on the left and we deduce
    \begin{equation}\label{equ;22}
    \begin{aligned}
    &\left( v_{\min}e^{-l_{0}}\left|B_{R}\right|^{2/n}R^{-2} \right) ^{\frac{n}{n+2}}\left\|\zeta F^{(l+1)/2}\right\|_{L^{\frac{2(n+2)}{n}}(Q_{\rho })}^{2}\\
    &\quad \leqslant C_{5}\left\{ \int_{Q_{\rho }}(\left|\zeta _{t}\right|+\left|\nabla \zeta \right|^{2})F^{l+1}+lR^{-2}\int_{Q_{\rho }}\zeta ^{2}v^{-1}F^{l+1}+l_{0}^{2}lR^{-2}\int_{Q_{\rho }}\zeta ^{2}F^{l+1} \right\} . 
    \end{aligned}
    \end{equation}
    For $k\geqslant 1$, choose
    \begin{equation*}
    \rho _{k}=\frac{R}{2}+\frac{R}{2^{k+1}}\quad \text{and}\quad \gamma _{k}=(l_{0}+1)\left( \frac{n+2}{n} \right) ^{k}.
    \end{equation*}
    Moreover, choose cut-off functions $\zeta _{k}=\zeta _{k}(x, t)$ such that $0\leqslant \zeta _{k}\leqslant 1$, with
    \begin{equation*}
    \zeta _{k}(x, t)=\begin{cases} 1, & (x, t)\in Q_{k+1}=Q_{\rho _{k+1}} \\
    0 , & (x, t)\in Q_{k}\backslash Q_{(\rho _{k}+\rho _{k+1})/2}, 
    \end{cases}
    \end{equation*}
    and
    \begin{equation*}
    \left|(\zeta _{k})_{t}\right|+\left|\nabla \zeta _{k}\right|^{2}\leqslant \frac{C(n)}{(\rho _{k}-\rho _{k+1})^{2}}\leqslant \frac{C(n)4^{k}}{R^{2}}. 
    \end{equation*}
    Take $\rho =\rho _{k}$ and $l+1=\gamma _{k}$ in (\ref{equ;22}), 
    \begin{equation*}
    \begin{aligned}
    &(v_{\min}e^{-l_{0}}\left|B_{R}\right|^{2/n})^{\frac{n}{n+2}}\left\|\zeta F^{\gamma _{k}/2}\right\|_{L^{\frac{2(n+2)}{n}}(Q_{k})}^{2}\\
    &\quad \leqslant \frac{C_{6}}{R^{4/(n+2)}}\left\{ 4^{k}+l_{0}^{2}\left( (l_{0}+1)\left( \frac{n+2}{n} \right)^{k}-1  \right)+\frac{1}{v_{\min}}\left( (l_{0}+1)\left( \frac{n+2}{n} \right)^{k}-1  \right)  \right\} \int_{Q_{k}}F^{\gamma _{k}}. 
    \end{aligned}
    \end{equation*}
    This implies
    \begin{equation*}
    \begin{aligned}
    &\left( \int_{Q_{k+1}} F^{\gamma _{k+1}}\right) ^{\frac{n}{n+2}}\\
    &\quad \leqslant C_{6}\left( \frac{e^{l_{0}}}{v_{\min}R^{4/n}\left|B_{R}\right|^{2/n}} \right) ^{\frac{n}{n+2}}\left\{ 4^{k}+l_{0}^{3}\left( \frac{n+2}{n} \right)^{k}+\frac{l_{0}}{v_{\min}}\left( \frac{n+2}{n} \right)^{k}   \right\} \int_{Q_{k}}F^{\gamma _{k}}\\
    &\quad \leqslant C_{7}\left( \frac{e^{l_{0}}}{v_{\min}R^{4/n}\left|B_{R}\right|^{2/n}} \right) ^{\frac{n}{n+2}}l_{0}^{3}\max\left\{ v_{\min}^{-1}, 1 \right\} 4^{k}\int_{Q_{k}}F^{\gamma _{k}}. 
    \end{aligned}
    \end{equation*}
    Taking the $1/\gamma _{k}$ power on both sides yields
    \begin{align*}
\left\|F\right\|_{L^{\gamma _{k+1}}(Q_{k+1})}&\leqslant \left[ C_{7}\left( \frac{e^{l_{0}}}{v_{\min}R^{4/n}\left|B_{R}\right|^{2/n}} \right) ^{\frac{n}{n+2}}l_{0}^{3}\max\left\{ v_{\min}^{-1}, 1 \right\}  \right] ^{\frac{1}{\gamma _{k}}}4^{\frac{k}{\gamma _{k}}}\left\|F\right\|_{L^{\gamma _{k}}(Q_{k})} \\
&\leqslant \left[ C_{7}\left( \frac{e^{l_{0}}}{v_{\min}R^{4/n}\left|B_{R}\right|^{2/n}} \right) ^{\frac{n}{n+2}}l_{0}^{3}\max\left\{ v_{\min}^{-1}, 1 \right\}  \right] ^{\sum\limits_{k=1}^{\infty}\frac{1}{\gamma _{k}}}4^{\sum\limits_{k=1}^{\infty}\frac{k}{\gamma _{k}}}\left\|F\right\|_{L^{\gamma _{1}}(Q_{3R/4})} \\
&\leqslant C_{8}\max\left\{ v_{\min}^{-\frac{n}{2(l_{0}+1)}}, 1 \right\}\left( \frac{e^{l_{0}}}{v_{\min}R^{4/n}\left|B_{R}\right|^{2/n}} \right) ^{\frac{n^{2}}{2(n+2)(l_{0}+1)}}\left\|F\right\| _{L^{\gamma_{1}}(Q_{3R/4})}.
\end{align*}
    Letting $k\to \infty$, we conclude 
    \begin{equation}\label{equ;Moser}
    \sup_{(x, t)\in Q_{R/2}}F(x, t)\leqslant C_{8}\max\left\{ v_{\min}^{-1/2}, 1 \right\} \left( \frac{e^{l_{0}}}{v_{\min}R^{4/n}\left|B_{R}\right|^{2/n}} \right) ^{\frac{n^{2}}{2(n+2)(l_{0}+1)}}\left\|F\right\| _{L^{\gamma_{1}}(Q_{3R/4})}, 
    \end{equation}
    where we have used $l\geqslant n$.

    To finish the proof, it suffices to estimate $\left\|F\right\|_{L^{\gamma_{1}}(Q_{3R/4})}$. We derive from (\ref{equ;21}) by taking $l=l_{0}$ and $\rho =R$ that
    \begin{equation}\label{equ;23}
    \begin{aligned}
    &\left( v_{\min}e^{-l_{0}}\left|B_{R}\right|^{2/n}R^{-2} \right) ^{\frac{n}{n+2}}\left\|\zeta F^{(l_{0}+1)/2}\right\|_{L^{\frac{2(n+2)}{n}}(Q_{R})}^{2}+Al_{0}\int_{Q_{R}}\zeta ^{2}v^{\beta -1}F^{l_{0}+2}\\
    &\quad \leqslant C_{5}\left\{ \int_{Q_{R}}\left( \left|\frac{\partial \zeta ^{2}}{\partial t}\right|+\left|\nabla \zeta \right|^{2} \right)F^{l_{0}+1}+l_{0}R^{-2}\int_{Q_{R}}\zeta ^{2}v^{-1}F^{l_{0}+1}+l_{0}^{3}R^{-2}\int_{Q_{R}}\zeta ^{2}F^{l_{0}+1}  \right\} . 
    \end{aligned}
    \end{equation}
    Observe that if 
    \begin{equation*}
    Fv^{\beta -1}\geqslant \frac{4C_{5}l_{0}^{2}}{AR^{2}}, 
    \end{equation*}
    then we have
    \begin{equation*}
    C_{5}l_{0}^{3}R^{-2}\int_{Q_{R}}\zeta ^{2}F^{l_{0}+1}\leqslant \frac{1}{4}Al_{0}\int_{Q_{R}}\zeta ^{2}v^{\beta -1}F^{l_{0}+2}.
    \end{equation*}
    Denote
    \begin{equation*}
    \Omega_{1}=\left\{ Fv^{\beta -1}\geqslant \frac{4C_{5}l_{0}^{2}}{AR^{2}} \right\} \quad ,\quad \Omega_{2}=\left\{ Fv^{\beta -1}<\frac{4C_{5}l_{0}^{2}}{AR^{2}} \right\} . 
    \end{equation*}
    The third term on RHS of (\ref{equ;23}) can be estimated as
    \begin{equation}\label{equ;24}
    \begin{aligned}
    C_{5}l_{0}^{3}R^{-2}\int_{Q_{R}}\zeta ^{2}F^{l_{0}+1}&=C_{5}l_{0}^{3}R^{-2}\int_{\Omega_{1}}\zeta ^{2}F^{l_{0}+1}+C_{5}l_{0}^{3}R^{-2}\int_{\Omega_{2}}\zeta ^{2}F^{l_{0}+1}\\
    &\leqslant C_{5}l_{0}^{3}R^{-2}\left( \frac{4C_{5}l_{0}^{2}v_{\max}^{1-\beta }}{AR^{2}} \right)^{l_{0}+1}R^{2}\left|B_{R}\right|  +\frac{1}{4}Al_{0}\int_{Q_{R}}\zeta ^{2}v^{\beta -1}F^{l_{0}+2}.
    \end{aligned}
    \end{equation}
    Similarly, denote
    \begin{equation*}
    \Omega_{3}=\left\{ Fv^{\beta }\geqslant \frac{4C_{5}}{AR^{2}} \right\} \quad ,\quad \Omega_{4}=\left\{ Fv^{\beta }<\frac{4C_{5}}{AR^{2}} \right\}. 
    \end{equation*}
    The second term on RHS of (\ref{equ;23}) can be estimated as
    \begin{equation}\label{equ;25}
    \begin{aligned}
        C_{5}l_{0}R^{-2}\int_{Q_{R}}\zeta ^{2}v^{-1}F^{l_{0}+1}&=C_{5}l_{0}R^{-2}\int_{\Omega_{3}}\zeta ^{2}v^{-1}F^{l_{0}+1}+C_{5}l_{0}R^{-2}\int_{\Omega_{4}}\zeta ^{2}v^{-1}F^{l_{0}+1}\\
        &\leqslant C_{5}l_{0}R^{-2}\left( \frac{4C_{5}}{AR^{2}} \right) ^{l_{0}+1}v_{\max}^{-1-\beta (l_{0}+1)}R^{2}\left|B_{R}\right| +\frac{1}{4}Al_{0}\int_{Q_{R}}\zeta ^{2}v^{\beta -1}F^{l_{0}+2},
    \end{aligned}
    \end{equation}
    where we have used $l_{0}\geqslant \delta_{0}\geqslant -\beta ^{-1}$. Substituting (\ref{equ;24}) and (\ref{equ;25}) into (\ref{equ;23}) yields
    \begin{align}
&\left( v_{\min}e^{-l_{0}}\left|B_{R}\right|^{2/n}R^{-2} \right)^{\frac{n}{n+2}}\left\|\zeta F^{(l_{0}+1)/2}\right\|_{L^{\frac{2(n+2)}{n}}(Q_{R})}^{2}+\frac{1}{2}Al_{0}\int_{Q_{R}}\zeta ^{2}v^{\beta -1}F^{l_{0}+2} \label{equ;26} \\*
&\quad \leqslant C_{5}\int_{Q_{R}}\left( \left|\frac{\partial \zeta ^{2}}{\partial t}\right|+\left|\nabla \zeta \right|^{2} \right) F^{l_{0}+1}+C_{5}l_{0}^{3}\left( \frac{4C_{5}l_{0}^{2}v_{\max}^{1-\beta }}{AR^{2}} \right) ^{l_{0}+1}\left|B_{R}\right| \notag \\*
&\quad \quad +C_{5}l_{0}\left( \frac{4C_{5}}{AR^{2}} \right) ^{l_{0}+1}v_{\max}^{-1-\beta (l_{0}+1)}\left|B_{R}\right|. \notag
\end{align}
    Let $\widetilde{\xi }=\widetilde{\xi }(t)$ and $\widetilde{\eta }=\widetilde{\eta }(x)$ be cut-off functions which satisfy $0\leqslant \xi , \eta \leqslant 1$ with
    \begin{equation*}
    \widetilde{\xi }(t)=\begin{cases} 1, & \displaystyle{t_{0}-\left( \frac{3R}{4} \right)^{2}\leqslant t\leqslant t_{0}},   \\
    0 , & \displaystyle{t\leqslant t_{0}-\left( \frac{7R}{8} \right)^{2}},
    \end{cases}
    \end{equation*}
    \begin{equation*}
    \widetilde{\eta }(x)=\begin{cases} 1, & x\in B_{3R/4}, \\
    0 , & x\notin B_{7R/8},  
    \end{cases}
    \end{equation*}
    and
    \begin{equation*}
    \left|\frac{ d \widetilde{\xi }}{ d t}\right|+\left|\nabla \widetilde{\eta }\right|^{2}\leqslant \frac{C(n)}{R^{2}}. 
    \end{equation*}
    Take $\zeta =\xi ^{1/2}\eta $, where $\xi =\widetilde{\xi }^{l_{0}+2}$, $\eta =\widetilde{\eta }^{l_{0}+2}$. We have
    \begin{equation*}
    \left|\frac{\partial \zeta ^{2}}{\partial t}\right|=(l_{0}+2)\widetilde{\xi }^{l_{0}+1}\eta ^{2}\left|\frac{ d \widetilde{\xi }}{ d t}\right|\leqslant \frac{C(n)l_{0}}{R^{2}}\xi ^{\frac{l_{0}+1}{l_{0}+2}}\eta ^{2}
    \end{equation*}
    and
    \begin{equation*}
    \left|\nabla \zeta \right|^{2}=(l_{0}+2)^{2}\widetilde{\xi }^{l_{0}+2}\widetilde{\eta }^{2l_{0}+2}\left|\nabla \widetilde{\eta }\right|^{2}\leqslant \frac{C(n)l_{0}^{2}}{R^{2}}\xi \eta ^{\frac{2(l_{0}+1)}{l_{0}+2}}. 
    \end{equation*}
    Using these inequality and Young's inequality, one can estimate the first term on RHS of (\ref{equ;26}) as follows:
    \begin{align}
C_{5}\int_{Q_{R}}\left|\frac{\partial \zeta ^{2}}{\partial t}\right|F^{l_{0}+1}&\leqslant \frac{C_{9}l_{0}}{R^{2}}\int_{Q_{R}}\xi ^{\frac{l_{0}+1}{l_{0}+2}}\eta ^{2}F^{l_{0}+1}\leqslant \frac{C_{9}l_{0}}{R^{2}}\int_{Q_{R}}(\xi \eta ^{2})^{\frac{l_{0}+1}{l_{0}+2}}F^{l_{0}+1} \label{equ;27} \\
&\leqslant \frac{C_{9}l_{0}}{R^{2}}\left( \int_{Q_{R}}\zeta ^{2}F^{l_{0}+2} \right) ^{\frac{l_{0}+1}{l_{0}+2}}\left( R^{2}\left|B_{R}\right| \right) ^{\frac{1}{l_{0}+2}} \notag \\
&\leqslant \frac{1}{l_{0}+2}\left( \frac{1}{4}Al_{0}v_{\max}^{\beta -1}\frac{l_{0}+2}{l_{0}+1} \right) ^{-(l_{0}+1)}\left( \frac{C_{9}l_{0}}{R^{2}} \right) ^{l_{0}+2}R^{2}\left|B_{R}\right|+\frac{1}{4}Al_{0}v_{\max}^{\beta -1}\int_{Q_{R}}\zeta ^{2}F^{l_{0}+2}. \notag
\end{align}
    Similarly, 
    \begin{align}
C_{5}\int_{Q_{R}}\left|\nabla \zeta \right|^{2}F^{l_{0}+1}&\leqslant \frac{C_{9}l_{0}^{2}}{R^{2}}\int_{Q_{R}}\xi \eta ^{\frac{2(l_{0}+1)}{l_{0}+2}}F^{l_{0}+1}\leqslant \frac{C_{9}l_{0}^{2}}{R^{2}}\int_{Q_{R}}(\xi \eta ^{2})^{\frac{l_{0}+1}{l_{0}+2}}F^{l_{0}+1} \label{equ;28} \\
&\leqslant \frac{C_{9}l_{0}^{2}}{R^{2}}\left( \int_{Q_{R}}\zeta ^{2}F^{l_{0}+2} \right) ^{\frac{l_{0}+1}{l_{0}+2}}\left( R^{2}\left|B_{R}\right| \right) ^{\frac{1}{l_{0}+2}} \notag \\
&\leqslant \frac{1}{l_{0}+2}\left( \frac{1}{4}Al_{0}v_{\max}^{\beta -1}\frac{l_{0}+2}{l_{0}+1} \right) ^{-(l_{0}+1)}\left( \frac{C_{9}l_{0}^{2}}{R^{2}} \right) ^{l_{0}+2}R^{2}\left|B_{R}\right|+\frac{1}{4}Al_{0}v_{\max}^{\beta -1}\int_{Q_{R}}\zeta ^{2}F^{l_{0}+2}. \notag
\end{align}
    Combining (\ref{equ;26}) with (\ref{equ;27}) and (\ref{equ;28}) implies
    \begin{align*}
&\left( v_{\min}e^{-l_{0}}\left|B_{R}\right|^{2/n}R^{-2} \right)^{\frac{n}{n+2}}\left\|\zeta F^{(l_{0}+1)/2}\right\|_{L^{\frac{2(n+2)}{n}}(Q_{R})}^{2} \\*
&\quad \leqslant C_{5}l_{0}^{3}\left( \frac{4C_{5}l_{0}^{2}v_{\max}^{1-\beta }}{AR^{2}} \right)^{l_{0}+1}\left|B_{R}\right| +C_{5}l_{0}\left( \frac{4C_{5}}{AR^{2}} \right) ^{l_{0}+1}v_{\max}^{-1-\beta (l_{0}+1)}\left|B_{R}\right| \\
&\quad \quad +\frac{2}{l_{0}+2}\left( \frac{1}{4}Al_{0}v_{\max}^{\beta -1}\frac{l_{0}+2}{l_{0}+1} \right) ^{-(l_{0}+1)}\left( \frac{C_{9}l_{0}^{2}}{R^{2}} \right) ^{l_{0}+2}R^{2}\left|B_{R}\right| \\*
&\quad \leqslant l_{0}^{3}\left( \frac{C_{10}l_{0}^{2}}{AR^{2}} \right) ^{l_{0}+1}\left|B_{R}\right|.
\end{align*}
    Since $\left. \zeta  \right|_{Q_{3R/4}}\equiv 1$, we have
    \begin{equation*}
    \left( \int_{Q_{3R/4}}F^{\gamma_{1}} \right) ^{\frac{n}{n+2}}\leqslant l_{0}^{3}\left( \frac{C_{10}l_{0}^{2}}{AR^{2}} \right) ^{l_{0}+1}\left( \frac{e^{l_{0}}R^{2}\left|B_{R}\right|}{v_{\min}} \right) ^{\frac{n}{n+2}}. 
    \end{equation*}
    Taking the $1/(l_{0}+1)$ power on both sides yields
    \begin{equation}\label{equ;29}
    \left\|F\right\|_{L^{\gamma_{1}}(Q_{3R/4})}\leqslant C_{11}\left( \frac{e^{l_{0}}R^{2}\left|B_{R}\right|}{v_{\min}} \right) ^{\frac{n}{(n+2)(l_{0}+1)}}\frac{l_{0}^{2}}{R^{2}}. 
    \end{equation}
    Finally, we combine (\ref{equ;29}) with (\ref{equ;Moser}) and derive
    \begin{equation*}
    \sup_{(x, t)\in Q_{R/2}}F(x, t)\leqslant C_{12}\max\left\{ v_{\min}^{-1}, 1 \right\} \left( \frac{1+\sqrt{k}R}{R} \right) ^{2}. 
    \end{equation*}
    Since $\displaystyle{F=v^{-\beta }(\left|\nabla v\right|^{2}+1)=v^{\frac{2-m}{2(m-1)}}\left|\nabla v\right|^{2}+v^{\frac{2-m}{2(m-1)}}}$, this completes the proof. 
\end{proof}
\begin{remark}
    Throughout the proof, the condition $0<R\leqslant 1$ is only used to give (\ref{equ;21}).
\end{remark}

A technical linear algebra lemma for symmetric matrix is required. 
\begin{lemma}[\cite{X}]\label{lemma;4}
Let $A=(a_{ij})$ be a nonzero $n\times n$ symmetric matrix. Then for any $a, b\in \mathbb{R}$, there holds
\begin{equation}\label{equ;37}
\max_{A\in \mathcal{S}(n);\left|e\right|=1}\left( a\frac{A(e, e)}{\left|A\right|}+b\frac{\mathrm{tr}\,A}{\left|A\right|} \right) ^{2}=(a+b)^{2}+(n-1)b^{2}, 
\end{equation}
where $\mathcal{S}(n)$ is the set of all the n-dimensional symmetric matrix. 
\end{lemma}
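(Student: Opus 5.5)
The plan is to view the quantity inside the square as a linear functional of $A$ paired against a fixed symmetric matrix, and then to apply Cauchy--Schwarz for the Frobenius inner product $\langle A,B\rangle=\mathrm{tr}(AB)$, for which $|A|^{2}=\langle A,A\rangle$. For a unit vector $e$ I will set
\begin{equation*}
B_{e}=a\, e\otimes e+b\, I .
\end{equation*}
Then $A(e,e)=\langle A,e\otimes e\rangle$ and $\mathrm{tr}\,A=\langle A,I\rangle$, so
\begin{equation*}
a\frac{A(e,e)}{|A|}+b\frac{\mathrm{tr}\,A}{|A|}=\frac{\langle A,B_{e}\rangle}{|A|}.
\end{equation*}

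For the upper bound, Cauchy--Schwarz gives that the square of this quantity is at most $|B_{e}|^{2}$. I then compute $|B_{e}|^{2}$ by diagonalizing. Complete $e$ to an orthonormal basis $\{e=e_{1},e_{2},\dots ,e_{n}\}$. In this basis $B_{e}$ is diagonal, with eigenvalue $a+b$ on $e_{1}$ and eigenvalue $b$ on each of $e_{2},\dots ,e_{n}$. Hence
\begin{equation*}
|B_{e}|^{2}=(a+b)^{2}+(n-1)b^{2}.
\end{equation*}
This value does not depend on $e$, so it bounds the expression uniformly over all admissible pairs $(A,e)$.

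For attainment, fix any unit vector $e$ and take $A=B_{e}$. This is a symmetric matrix, and Cauchy--Schwarz becomes an equality, so the expression equals $|B_{e}|^{2}$ and the maximum is achieved. The only delicate point is that the statement requires $A\neq 0$, so I must handle the degenerate case $B_{e}=0$. For $n\geqslant 2$ this happens exactly when $a=b=0$. For $n=1$ it happens when $a+b=0$. In both situations the right-hand side $(a+b)^{2}+(n-1)b^{2}$ is $0$, and the left-hand side is identically $0$, so any nonzero $A$ attains the maximum.

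There is no real obstacle here. The one step requiring care is the attainment argument in the degenerate case just described, together with checking that the maximizing $A$ may be taken to depend on $e$, which is allowed because the maximum is taken jointly over $A$ and $e$.
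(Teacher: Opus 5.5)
Your proof is correct and complete. The paper gives no argument of its own for this lemma; it is quoted from Xu \cite{X}. So your Cauchy--Schwarz argument for the Frobenius inner product is a self-contained proof rather than a variant of one in the text. The paper's usage in (\ref{equ;39}), where $v_{ij}^{2}=|A|^{2}$, confirms that $|A|$ is the Frobenius norm, as you assumed.

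It is worth comparing with the other natural route, which diagonalizes $A$ instead:
\begin{itemize}
\item One writes $aA(e,e)+b\,\mathrm{tr}A=\sum_i\lambda_i(ae_i^2+b)$ in an eigenbasis of $A$ and applies Cauchy--Schwarz in $\mathbb{R}^n$.
\item One must then maximize $a^2\sum_i e_i^4+2ab+nb^2$ over unit $e$, using $\sum_i e_i^4\leqslant 1$.
\end{itemize}
Your choice to diagonalize $B_e=a\,e\otimes e+bI$ makes the bound $|B_e|^2$ independent of $e$ immediately. It also gives attainment for free by taking $A=B_e$.

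Your treatment of the degenerate case $B_e=0$ is right for both $n\geqslant 2$ and $n=1$. Only the upper-bound half of the lemma is actually used later, in (\ref{equ;41}) and (\ref{equ;54}).
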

\begin{theorem}\label{thm;2}
Let $(M, g)$ be a Riemannian manifold of dimension $n\geqslant 2$ without boundary and with $\mathrm{Ric}\,_{M}\geqslant -k$ for some $k\geqslant 0$. Suppose that $u$ is any positive solution to (\ref{equ;1}) on $Q_{R}=B_{R}\times (t_{0}-R^{2}, t_{0}]$,  where $1<m<1+\frac{1}{\sqrt{n-1}}$ and $a$, $b$ are constants. Suppose also that $0<u\leqslant M$. Let $v=m u^{m-1}/(m-1)\leqslant V$. If one of the following conditions holds, 
\begin{equation}\label{equ;30}
\begin{aligned}
a>0,& \quad  a\log u_{\max}+\frac{2}{2m-1}a+b< 0, \\
a=0,& \quad b\leqslant 0, \\
a<0,& \quad  a\log u_{\min}+\frac{2}{2m-1}a+b< 0. 
\end{aligned}
\end{equation}
Then we have
\begin{equation}\label{equ;31}
\sup_{(x, t)\in Q_{R/2}}v^{\frac{1}{2(m-1)}}\left|\nabla v\right|  \leqslant C\frac{1+\sqrt{k}R}{R}, 
\end{equation}
where $C=C(m, n, a,b , V )$ is a constant . In particular, $u$ does not need to be bounded from below when $a\geqslant 0$. 
\end{theorem}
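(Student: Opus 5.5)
We prove the estimate with the maximum principle applied to a cut-off version of $F=v^{1/(m-1)}\left|\nabla v\right|^{2}$, not with Moser iteration. The reason is that the Sobolev step in the proof of Theorem \ref{thm;R1} carries the weight $v$ in the diffusion term, which forced the factor $v_{\min}$ there. At a maximum point no such weight degenerates, so only the upper bound $V$ should enter, and positive powers of $v$ are harmless.

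\textbf{Step 1: the evolution inequality for $F$.} Take $F=v^{-\beta}\left|\nabla v\right|^{2}$ with $\beta=-\frac{1}{m-1}$, so that $F^{1/2}$ is exactly $v^{\frac{1}{2(m-1)}}\left|\nabla v\right|$. Lemma \ref{lemma;1} applies with $f=\left|\nabla v\right|^{2}$, without the $+1$ used before, and with constant $a,b$, so $\nabla a=\nabla b=0$. The zeroth-order terms collect into
\begin{equation*}
\big[(2-\beta)(a\log v+b')+2a\big]F .
\end{equation*}
From $v=mu^{m-1}/(m-1)$ and the formula for $b'$ we get $a\log v+b'=(m-1)(a\log u+b)$. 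Since $2-\beta=\frac{2m-1}{m-1}$, this coefficient equals
\begin{equation*}
(2m-1)\Big(a\log u+\frac{2a}{2m-1}+b\Big).
\end{equation*}
Under each alternative in (\ref{equ;30}) this is $\leqslant 0$ on $Q_{R}$. For $a>0$ the expression is increasing in $u$, so we use $u_{\max}$. For $a<0$ it is decreasing in $u$, so we use $u_{\min}$. For $a=0$ it reduces to $b\leqslant 0$. Hence this whole term has the favourable sign in $-\mathcal{L}(F)$ and can be dropped. This is why no lower bound on $u$ (equivalently on $v$) is needed when $a\geqslant 0$.

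\textbf{Step 2: the Hessian terms via Lemma \ref{lemma;4} (main obstacle).} The crude bound $\left|\nabla^{2}v\right|^{2}\geqslant(\Delta v)^{2}/n$ only works for $m<1+\frac{1}{1+\sqrt{2n}}$. Instead we keep the term $2(1+\beta(m-1))v^{-\beta}\langle\nabla v,\nabla f\rangle$ in its Hessian form $4(1+\beta(m-1))v^{-\beta}\nabla^{2}v(\nabla v,\nabla v)$. We then split it, following \cite{HXZ}, into a gradient part $\langle\nabla F,\nabla v\rangle$ and a part quadratic in $\nabla^{2}v$. Together with $2(m-1)v^{1-\beta}\left|\nabla^{2}v\right|^{2}$ and $2(m-1)v^{-\beta}\Delta v\left|\nabla v\right|^{2}$, this part has the shape
\begin{equation*}
v^{1-\beta}\left|\nabla^{2}v\right|^{2}\cdot Q,\qquad Q=c_{0}-\Big(c_{1}\frac{\nabla^{2}v(e,e)}{\left|\nabla^{2}v\right|}+c_{2}\frac{\Delta v}{\left|\nabla^{2}v\right|}\Big)\cdot(\text{ratio }v^{-1}\left|\nabla v\right|^{2}/\left|\nabla^{2}v\right|),\qquad e=\nabla v/\left|\nabla v\right| .
\end{equation*}
Completing the square and bounding the worst case by Lemma \ref{lemma;4}, i.e. by $(c_{1}+c_{2})^{2}+(n-1)c_{2}^{2}$, should give
\begin{equation*}
-\mathcal{L}(F)\geqslant A\,v^{\beta-1}F^{2}-2(m-1)kvF-c\,\langle\nabla F,\nabla v\rangle
\end{equation*}
for some $A=A(m,n)>0$. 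We expect the positivity condition on $A$, with $\beta=-1/(m-1)$ fixed (or with a free parameter in the splitting optimized), to reduce exactly to $(m-1)^{2}(n-1)<1$, i.e. $m<1+\frac{1}{\sqrt{n-1}}$. Verifying this algebra is the delicate step. If it fails for this $\beta$, we first retune the splitting parameter, and then $\beta$.

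\textbf{Step 3: localisation.} Let $\psi(x,t)$ be a space--time cut-off, equal to $1$ on $Q_{R/2}$ and supported in $Q_{R}$, with
\begin{equation*}
\left|\nabla\psi\right|^{2}/\psi+\left|\psi_{t}\right|\leqslant C/R^{2},\qquad -\Delta\psi\leqslant C(1+\sqrt{k}R)/R^{2},
\end{equation*}
using Laplacian comparison and Calabi's trick. At a positive maximum point of $\psi F$ we have $\nabla F=-F\nabla\psi/\psi$, $\Delta(\psi F)\leqslant 0$ and $\partial_{t}(\psi F)\geqslant 0$. Multiplying the inequality of Step 2 by $\psi^{2}$, the error terms are
\begin{equation*}
(m-1)v\left|\Delta\psi\right|F,\quad (m-1)v\left|\nabla\psi\right|^{2}F/\psi,\quad \left|\psi_{t}\right|F,\quad kvF,\quad F\left|\nabla v\right|\left|\nabla\psi\right| .
\end{equation*}
In the last term we write $\left|\nabla v\right|=v^{\beta/2}F^{1/2}$. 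Each error term is absorbed into $\tfrac12 A\psi^{2}v^{\beta-1}F^{2}$ by Young's inequality. The leftover factors are of the form $v^{1-\beta}$, $v^{2-\beta}$ and so on; because $\beta<0$ these are nonnegative powers of $v$, hence bounded by powers of $V$. This yields
\begin{equation*}
\psi F\leqslant C(m,n,a,b,V)\Big(\frac{1+\sqrt{k}R}{R}\Big)^{2}.
\end{equation*}
Taking square roots on $Q_{R/2}$ gives (\ref{equ;31}). No restriction $R\leqslant1$ is needed, because there are no $R$-independent lower-order terms left.
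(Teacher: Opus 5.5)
Your proposal is correct. Steps 1--2 coincide with the paper's computation; the difference is that you localise by the maximum principle where the paper uses a weighted Nash--Moser iteration.

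\emph{Closing Step 2.} For $\beta=-1/(m-1)$ the original cross term $2(1+\beta(m-1))v^{-\beta}\langle\nabla v,\nabla f\rangle$ vanishes identically. So the whole mixed Hessian term has to be produced by adding $\varepsilon\langle\nabla F,\nabla v\rangle$. With $\varepsilon=m$ one gets exactly $A=\frac{1-(n-1)(m-1)^{2}}{2(m-1)}$, which is positive iff $m<1+\frac{1}{\sqrt{n-1}}$. Together with your $\beta$, this choice of $\varepsilon$ is the joint minimiser of the bracket produced by Lemma \ref{lemma;4}. With $\varepsilon=0$ the coefficient would be negative, so the free parameter is essential. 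Your expectation is therefore right.

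\emph{The paper's localisation.} It tests against $\zeta^{2}v^{-\lambda}F^{l}$ with $\lambda=(n'+2)/2$, chosen so that the degenerate diffusion weight $v$ does not force a $v_{\min}$ into the Sobolev step. It then uses Saloff-Coste's inequality, Lemma \ref{lemma;2}, and a separate bound on the initial $L^{\gamma_{1}}$ norm.

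\emph{Your localisation.} You apply the maximum principle to $\psi F$, as in \cite{HXZ}. At a maximum point all error terms absorb as you claim, and one gets $\psi F\leqslant C(m,n)\,v^{\frac{m}{m-1}}\big(\frac{1+v}{R^{2}}+\frac{\sqrt{k}\,v}{R}+kv\big)$. Only positive powers of $v\leqslant V$ appear. Only the one-sided bound on $-\Delta\psi$ is used, since $\Delta\psi$ enters as $-(m-1)vF\Delta\psi$; your list should not say $|\Delta\psi|$.

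\emph{What each route buys.} Yours is much shorter and needs no Sobolev inequality or volume bookkeeping. It uses (\ref{equ;30}) only pointwise and in non-strict form, and gives a constant depending on $m,n,V$ alone. In the paper, the time derivative of the weight $v^{-\lambda}$ shifts the reaction coefficient to $(2-\beta-\frac{\lambda}{l+1})(a\log v+b')+2a$. Keeping its sign requires the strict inequalities in (\ref{equ;30}) and a threshold $\delta_{1}$. That threshold depends on $a,b$ and, for $a\neq0$, on the margin in (\ref{equ;30}), and it then enters $l_{0}$ and the final constant. The paper's integral argument keeps the treatment uniform with the variable-coefficient Theorems \ref{thm;R1} and \ref{thm;R2}. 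It is also the kind of method that still works where a pointwise maximum principle with Calabi's trick is unavailable, such as for weak solutions or under integral curvature assumptions.
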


\begin{proof}
    We use the notation in the proof of Theorem \ref{thm;R1} and only prove for $n\geqslant 3$. The spatial Sobolev exponent, the parabolic exponent, the Moser amplification factor and the exponent $\lambda$ defined below are $2n'/(n'-2)$, $2(n'+2)/n'$, $(n'+2)/n'$ and $(n'+2)/2$, respectively. The argument is also valid when $n=2$ as long as all the $n$ in these exponents  are replaced by $n'$.

    Let 
    \begin{equation*}
    f=\left|\nabla v\right|^{2}\quad \text{and} \quad F=v^{-\beta }f=\frac{\left|\nabla v\right|^{2}}{v^{\beta }}, 
    \end{equation*}
    where $\beta $ is a (negative) constant to be determined. Then we have
    \begin{equation*}
    \begin{aligned}
    \mathcal{L}(F)&=-\beta (1+(m-1)(\beta +1))v^{-\beta -1}f^{2}-\beta (a\log v+b')v^{-\beta }f-2(m-1)v^{-\beta +1}\left|\nabla ^{2}v\right|^{2}\\
    &\quad -2(m-1)v^{-\beta +1}\mathrm{Ric}\,(\nabla v, \nabla v)+2(m-1)v^{-\beta }f\Delta v+2v^{-\beta }\left<\nabla v, \nabla f \right>\\
    &\quad +2(a\log v+b')v^{-\beta }f+2av^{-\beta }f+2(m-1)\beta v^{-\beta }\left<\nabla v, \nabla f \right>.
    \end{aligned}
    \end{equation*}
    For some $\varepsilon $ to be determined later, we have
    \begin{align}\label{equ;38}
    & -\mathcal{L}(F)+\varepsilon \left<\nabla F, \nabla v \right>\nonumber\\*
    &=\beta (1+(m-1)(\beta +1)-\varepsilon )v^{-\beta -1}f^{2}-((2-\beta )(a\log v+b')+2a)v^{-\beta }f\nonumber\\
    &\quad +2(m-1)v^{-\beta +1}v_{ij}^{2}-2(m-1)v^{-\beta }fv_{jj}+(2\varepsilon -4(1+\beta (m-1)))v^{-\beta }v_{i}v_{j}v_{ij}\\*
    &\quad +2(m-1)v^{-\beta +1}\mathrm{Ric}\,(\nabla v, \nabla v).\nonumber
    \end{align}
    Define $A=(v_{ij})$, $e=\displaystyle{\frac{\nabla v}{\left|\nabla v\right|}}$. We use the inequality $ax^{2}+bx\geqslant \displaystyle{-\frac{b^{2}}{4a}}$ to deduce
    \begin{align}\label{equ;39}
    &2(m-1)v^{-\beta +1}v_{ij}^{2}-2(m-1)v^{-\beta }fv_{jj}+(2\varepsilon -4(1+\beta (m-1)))v^{-\beta }v_{i}v_{j}v_{ij}\nonumber\\*
    =&2(m-1)v^{-\beta +1}\left|A\right|^{2}-2(m-1)v^{-\beta }f\frac{\mathrm{tr}\,A}{\left|A\right|}\left|A\right|\nonumber\\
   &+(2\varepsilon -4(1+\beta (m-1)))v^{-\beta }f\frac{A(e, e)}{\left|A\right|}\left|A\right|\\*
    \geqslant & -\frac{1}{2(m-1)}\left[ (\varepsilon -2(1+\beta (m-1)))\frac{A(e, e)}{\left|A\right|}-(m-1)\frac{\mathrm{tr}\,A}{\left|A\right|} \right] ^{2}v^{-\beta -1}f^{2}. \nonumber
    \end{align}
    Combining (\ref{equ;38}) with (\ref{equ;39}), we derive
    \begin{equation*}
    \begin{aligned}
    &-\mathcal{L}(F)+\varepsilon \left<\nabla F, \nabla v \right>+((2-\beta )(a\log v+b')+2a)v^{-\beta }f\\
    &\quad \geqslant -\frac{1}{2(m-1)}\left[ (\varepsilon -2(1+\beta (m-1)))\frac{A(e, e)}{\left|A\right|}-(m-1)\frac{\mathrm{tr}\,A}{\left|A\right|} \right] ^{2}v^{-\beta -1}f^{2}\\
    &\quad +\beta (1+(m-1)(\beta +1)-\varepsilon )v^{-\beta -1}f^{2}-2(m-1)kv^{-\beta +1}f. 
    \end{aligned}
    \end{equation*}
    Applying Lemma \ref{lemma;4}, we have
    \begin{align}
&-\mathcal{L}(F)+\varepsilon \left<\nabla F, \nabla v \right>+((2-\beta )(a\log v+b')+2a)v^{-\beta }f \label{equ;41} \\*
&\quad \geqslant -\frac{1}{2(m-1)}\bigg\{ (\varepsilon -2(1+\beta (m-1))-(m-1))^{2}+(n-1)(m-1)^{2} \notag \\
&\quad \quad -2(m-1)\beta (1+(m-1)(\beta +1)-\varepsilon ) \bigg\} v^{-\beta -1}f^{2}-2(m-1)kv^{-\beta +1}f \notag \\
&\quad =-\frac{1}{2(m-1)}\bigg\{  2(m-1)^{2}\beta ^{2}+2(m-1)(m+2-\varepsilon )\beta \notag \\*
&\quad \quad+(m+1-\varepsilon )^{2}+(n-1)(m-1)^{2}  \bigg\} v^{-\beta -1}f^{2}-2(m-1)kv^{-\beta +1}f. \notag
\end{align}
    We minimize the coefficient of $v^{-\beta -1}f^{2}$ by letting
\begin{equation*}
\beta =-\frac{1}{m-1}\quad \text{and}\quad \varepsilon =m.
\end{equation*}
Then \eqref{equ;41} becomes
\begin{equation}\label{equ;68}
\mathcal{L}(F)+Av^{-\beta -1}f^{2}\leqslant m\left<\nabla F, \nabla v \right>+2(m-1)kvF+\big((2-\beta )(a\log v+b')+2a\big)F,
\end{equation}
where
\begin{equation*}
A=\frac{1-(n-1)(m-1)^{2}}{2(m-1)}>0.
\end{equation*}

The rest of the proof is similar to that of Theorem \ref{thm;R1}. Let $\lambda >1$ be a constant. Multiplying both sides of \eqref{equ;68} by $\zeta ^{2}v^{-\lambda }F^{l}$ and integrating on $Q_{\rho ,\tau }$, we deduce
\begin{align*}
&\frac{1}{l+1}\int_{B_{\rho }}\zeta ^{2}v^{-\lambda }F^{l+1}(x,\tau )+(m-1)l\int \zeta ^{2}v^{1-\lambda }F^{l-1}\left|\nabla F\right|^{2}+A\int \zeta ^{2}v^{\beta -\lambda -1}F^{l+2} \\*
&\quad \leqslant \frac{1}{l+1}\int \frac{\partial \zeta ^{2}}{\partial t}v^{-\lambda }F^{l+1}+\big((\lambda -1)(m-1)+m\big)\int \zeta ^{2}v^{-\lambda }F^{l}\left<\nabla F,\nabla v\right> \\
&\quad\quad -2(m-1)\int \zeta v^{1-\lambda }F^{l}\left<\nabla F,\nabla \zeta \right>-\frac{\lambda }{l+1}\int \zeta ^{2}v^{-\lambda -1}F^{l+1}v_{t} \\*
&\quad\quad +2(m-1)k\int \zeta ^{2}v^{1-\lambda }F^{l+1}+\big((2-\beta )(a\log v+b')+2a\big)\int \zeta ^{2}v^{-\lambda }F^{l+1}.
\end{align*}
Since $v_{t}=(m-1)v\Delta v+\left|\nabla v\right|^{2}+av\log v+b'v$, integrating by parts gives
\begin{align*}
-\frac{\lambda }{l+1}\int \zeta ^{2}v^{-\lambda -1}F^{l+1}v_{t}
&=\frac{2\lambda (m-1)}{l+1}\int \zeta v^{-\lambda }F^{l+1}\left<\nabla \zeta ,\nabla v\right> \\*
&\quad +\lambda (m-1)\int \zeta ^{2}v^{-\lambda }F^{l}\left<\nabla F,\nabla v\right> \\
&\quad -\frac{\lambda (1+\lambda (m-1))}{l+1}\int \zeta ^{2}v^{\beta -\lambda -1}F^{l+2} \\*
&\quad -\frac{\lambda }{l+1}\int \zeta ^{2}v^{-\lambda }F^{l+1}(a\log v+b').
\end{align*}
Hence
\begin{align}
&\frac{1}{l+1}\int_{B_{\rho }}\zeta ^{2}v^{-\lambda }F^{l+1}(x,\tau )+(m-1)l\int \zeta ^{2}v^{1-\lambda }F^{l-1}\left|\nabla F\right|^{2} \label{equ;69} \\*
&\quad +\left(A+\frac{\lambda (1+\lambda (m-1))}{l+1}\right)\int \zeta ^{2}v^{\beta -\lambda -1}F^{l+2} \notag \\
&\leqslant \frac{1}{l+1}\int \frac{\partial \zeta ^{2}}{\partial t}v^{-\lambda }F^{l+1}+(1+2\lambda (m-1))\int \zeta ^{2}v^{-\lambda }F^{l}\left<\nabla F,\nabla v\right> \notag \\
&\quad -2(m-1)\int \zeta v^{1-\lambda }F^{l}\left<\nabla F,\nabla \zeta \right>+\frac{2\lambda (m-1)}{l+1}\int \zeta v^{-\lambda }F^{l+1}\left<\nabla \zeta ,\nabla v\right> \notag \\*
&\quad +2(m-1)k\int \zeta ^{2}v^{1-\lambda }F^{l+1} +\int \zeta ^{2}\left[\left(2-\beta -\frac{\lambda }{l+1}\right)(a\log v+b')+2a\right]v^{-\lambda }F^{l+1}. \notag
\end{align}

By the Cauchy--Schwarz inequality,
\begin{equation}\label{equ;70}
\begin{aligned}
&(1+2\lambda (m-1))\int \zeta ^{2}v^{-\lambda }F^{l}\left<\nabla F,\nabla v\right>\\
&\quad \leqslant \frac{(m-1)l}{4}\int \zeta ^{2}v^{1-\lambda }F^{l-1}\left|\nabla F\right|^{2}+\frac{(1+2\lambda (m-1))^{2}}{(m-1)l}\int \zeta ^{2}v^{\beta -\lambda -1}F^{l+2},
\end{aligned}
\end{equation}
\begin{equation}\label{equ;71}
\begin{aligned}
&-2(m-1)\int \zeta v^{1-\lambda }F^{l}\left<\nabla F,\nabla \zeta \right>\\
&\quad \leqslant \frac{(m-1)l}{4}\int \zeta ^{2}v^{1-\lambda }F^{l-1}\left|\nabla F\right|^{2}+\frac{4(m-1)}{l}\int v^{1-\lambda }F^{l+1}\left|\nabla \zeta \right|^{2},
\end{aligned}
\end{equation}
and
\begin{equation}\label{equ;72}
\begin{aligned}
&\frac{2\lambda (m-1)}{l+1}\int \zeta v^{-\lambda }F^{l+1}\left<\nabla \zeta ,\nabla v\right>\\
&\quad \leqslant \frac{2\lambda (m-1)}{l+1}\int \zeta v^{\beta /2-\lambda }F^{l+3/2}\left|\nabla \zeta \right|\\
&\quad \leqslant \frac{\lambda (m-1)}{l+1}\int \zeta ^{2}v^{\beta -\lambda -1}F^{l+2}+\frac{\lambda (m-1)}{l+1}\int v^{1-\lambda }F^{l+1}\left|\nabla \zeta \right|^{2}.
\end{aligned}
\end{equation}
Substituting \eqref{equ;70}--\eqref{equ;72} into \eqref{equ;69} yields
\begin{align*}
&\frac{1}{l+1}\int_{B_{\rho }}\zeta ^{2}v^{-\lambda }F^{l+1}(x,\tau )+\frac{(m-1)l}{2}\int \zeta ^{2}v^{1-\lambda }F^{l-1}\left|\nabla F\right|^{2} \\*
&\quad +\left(A+\frac{\lambda (1+(\lambda -1)(m-1))}{l+1}-\frac{(1+2\lambda (m-1))^{2}}{(m-1)l}\right)\int \zeta ^{2}v^{\beta -\lambda -1}F^{l+2} \\
&\leqslant \frac{1}{l+1}\int \frac{\partial \zeta ^{2}}{\partial t}v^{-\lambda }F^{l+1}+\left(\frac{4(m-1)}{l}+\frac{\lambda (m-1)}{l+1}\right)\int v^{1-\lambda }F^{l+1}\left|\nabla \zeta \right|^{2} \\*
&\quad +2(m-1)k\int \zeta ^{2}v^{1-\lambda }F^{l+1}+\int \zeta ^{2}\left[\left(2-\beta -\frac{\lambda }{l+1}\right)(a\log v+b')+2a\right]v^{-\lambda }F^{l+1}.
\end{align*}
Note that \eqref{equ;30} implies
\begin{equation*}
(2-\beta )(a\log v+b')+2a\leqslant0,
\end{equation*}
and the inequality is strict unless $a=b=0$. Since $\lambda >1$ is a constant, there exists $\delta _{1}>0$ such that
\begin{equation*}
\left(2-\beta -\frac{\lambda }{l+1}\right)(a\log v+b')+2a\leqslant 0
\end{equation*}
for all $l\geqslant \delta _{1}$ (this is trivial when $a=b=0$). Let
\begin{equation*}
l_{0}=\delta _{0}(1+\sqrt{k}R),\qquad \delta _{0}=\delta _{0}(m,n,\delta_{1}).
\end{equation*}
Then $l_{0}\geqslant \delta _{0}$. For $l\geqslant \frac{2(1+2\lambda (m-1))^{2}}{(m-1)A}$,
we have
\begin{equation*}
\begin{aligned}
&\frac{1}{l+1}\int_{B_{\rho }}\zeta ^{2}v^{-\lambda }F^{l+1}(x,\tau )+\frac{(m-1)l}{2}\int \zeta ^{2}v^{1-\lambda }F^{l-1}\left|\nabla F\right|^{2}+\frac{A}{2}\int \zeta ^{2}v^{\beta -\lambda -1}F^{l+2}\\
&\leqslant \frac{1}{l+1}\int \frac{\partial \zeta ^{2}}{\partial t}v^{-\lambda }F^{l+1}+\left(\frac{4(m-1)}{l}+\frac{\lambda (m-1)}{l+1}\right)\int v^{1-\lambda }F^{l+1}\left|\nabla \zeta \right|^{2}\\
&\quad +2(m-1)k\int \zeta ^{2}v^{1-\lambda }F^{l+1}.
\end{aligned}
\end{equation*}
We use the following inequality:
\begin{equation*}
\begin{aligned}
\left|\nabla \left(\zeta v^{(1-\lambda )/2}F^{(l+1)/2}\right)\right|^{2}
&\leqslant 3v^{1-\lambda }F^{l+1}\left|\nabla \zeta \right|^{2}+\frac{3(\lambda -1)^{2}}{4}\zeta ^{2}v^{\beta -\lambda -1}F^{l+2}\\
&\quad +\frac{3(l+1)^{2}}{4}\zeta ^{2}v^{1-\lambda }F^{l-1}\left|\nabla F\right|^{2},
\end{aligned}
\end{equation*}
and
\begin{equation*}
\frac{1}{l+1}\int_{B_{\rho }}\zeta ^{2}v^{1-\lambda }F^{l+1}(x,\tau )\leqslant \frac{V}{l+1}\int_{B_{\rho }}\zeta ^{2}v^{-\lambda }F^{l+1}(x,\tau )
\end{equation*}
since $0<v\leqslant V=mM^{m-1}/(m-1)$. We obtain
\begin{align*}
&\frac{1}{(l+1)V}\int_{B_{\rho }}\zeta ^{2}v^{1-\lambda }F^{l+1}(x,\tau )+\frac{2(m-1)l}{3(l+1)^{2}}\int \left|\nabla \left(\zeta v^{(1-\lambda )/2}F^{(l+1)/2}\right)\right|^{2} \\*
&\quad +\left(\frac{A}{2}-\frac{(m-1)l(\lambda -1)^{2}}{2(l+1)^{2}}\right)\int \zeta ^{2}v^{\beta -\lambda -1}F^{l+2} \\
&\leqslant \frac{1}{l+1}\int \frac{\partial \zeta ^{2}}{\partial t}v^{-\lambda }F^{l+1}+\left(\frac{4(m-1)}{l}+\frac{\lambda (m-1)}{l+1}+\frac{2(m-1)l}{(l+1)^{2}}\right)\int v^{1-\lambda }F^{l+1}\left|\nabla \zeta \right|^{2} \\*
&\quad +2(m-1)k\int \zeta ^{2}v^{1-\lambda }F^{l+1}.
\end{align*}
For $l\geqslant \frac{2(m-1)(\lambda -1)^{2}}{A}$, as in the proof of  Theorem \ref{thm;R1}, Saloff--Coste's Sobolev inequality and Lemma \ref{lemma;2} imply
\begin{equation}\label{equ;73}
\begin{aligned}
&\left(Ve^{-l_{0}}\left|B_{R}\right|^{2/n}R^{-2}\right)^{\frac{n}{n+2}}\left\|\zeta v^{(1-\lambda )/2}F^{(l+1)/2}\right\|_{L^{\frac{2(n+2)}{n}}(Q_{\rho })}^{2}+AlV\int_{Q_{\rho }}\zeta ^{2}v^{\beta -\lambda -1}F^{l+2}\\
&\leqslant C_{1}V\bigg\{\int_{Q_{\rho }}\frac{\partial \zeta ^{2}}{\partial t}v^{-\lambda }F^{l+1}+(\lambda +1)\int_{Q_{\rho }}v^{1-\lambda }F^{l+1}\left|\nabla \zeta \right|^{2} +l_{0}^{2}lR^{-2}\int_{Q_{\rho }}\zeta ^{2}v^{1-\lambda }F^{l+1}\bigg\}.
\end{aligned}
\end{equation}
Take $\lambda $ such that
\begin{equation*}
(1-\lambda )\frac{n+2}{n}=-\lambda ,
\end{equation*}
that is , $\lambda =(n+2)/2$. When $n=2$, we take $\lambda =(n'+2)/2=3$. In addition, set
\begin{equation}\label{equ;74}
\delta _{0}=\max\left\{C_{n'}+2,\ n',\ \delta_{1},\ \frac{2(1+2\lambda (m-1))^{2}}{(m-1)A},\ \frac{2(m-1)(\lambda -1)^{2}}{A},\ \frac{\lambda}{1-\beta}\right\},
\end{equation}
where $C_{n'}$ is the constant in Lemma \ref{lemma;3}.

On the one hand, omitting the highest-order term in \eqref{equ;73} and performing the standard iteration with respect to the measure $ d \nu =v^{-\lambda } d \mu d t$, we obtain
\begin{equation*}
\begin{aligned}
\left(\int_{Q_{k+1}}v^{-\lambda }F^{\gamma _{k+1}}\right)^{1/\gamma _{k+1}}
&\leqslant C_{2}\left[\left(\frac{e^{l_{0}}}{R^{4/n}\left|B_{R}\right|^{2/n}}\right)^{\frac{n}{n+2}}l_{0}^{3}4^{k}\right]^{\sum\limits_{k}\frac{1}{\gamma _{k}}}\left(\int_{Q_{3R/4}}v^{-\lambda }F^{\gamma _{1}}\right)^{1/\gamma _{1}},
\end{aligned}
\end{equation*}
where $\gamma _{k}$ and $Q_{k}$ are defined as in the proof of Theorem \ref{thm;R1}. Thus
\begin{equation}\label{equ;75}
\sup_{Q_{R/2}}F\leqslant C_{3}\left(\frac{e^{l_{0}}}{R^{4/n}\left|B_{R}\right|^{2/n}}\right)^{\frac{n^{2}}{2(n+2)(l_{0}+1)}}\left(\int_{Q_{3R/4}}v^{-\lambda }F^{\gamma _{1}}\right)^{1/\gamma _{1}},
\end{equation}
where $C_{3}=C_{3}(m,n,a,b,V)$.

We next control the initial integral. In \eqref{equ;73}, take $l=l_{0}$ and $\rho =R$, and let $\zeta =\eta ^{l_{0}+2}$, where
\begin{equation*}
0\leqslant \eta \leqslant 1,\qquad \eta \equiv 1\ \text{in }Q_{3R/4},\qquad \mathrm{Supp}\,\eta \subset Q_{R},\qquad \left|\nabla \eta \right|\leqslant \frac{C}{R},\qquad \left|\eta _{t}\right|\leqslant \frac{C}{R^{2}}.
\end{equation*}
Then
\begin{equation*}
\left|\nabla \zeta \right|^{2}=(l_{0}+2)^{2}\eta ^{2(l_{0}+1)}\left|\nabla \eta \right|^{2},\qquad \left|\frac{\partial \zeta ^{2}}{\partial t}\right|=2(l_{0}+2)\eta ^{2l_{0}+3}\left|\eta _{t}\right|.
\end{equation*}
Denote 
$H=\zeta ^{2}v^{\beta -\lambda -1}F^{l_{0}+2}=\eta ^{2(l_{0}+2)}v^{\beta -\lambda -1}F^{l_{0}+2}$. By Young's inequality,
\begin{align}
C_{1}V\int_{Q_{R}}\frac{\partial \zeta ^{2}}{\partial t}v^{-\lambda }F^{l_{0}+1}
&\leqslant \frac{2(l_{0}+2)C_{1}V}{R^{2}}\int_{Q_{R}}H^{\frac{l_{0}+1}{l_{0}+2}}v^{\frac{(l_{0}+1)(1-\beta )-\lambda }{l_{0}+2}}\eta \label{equ;76} \\*
&\leqslant \frac{1}{6}Al_{0}V\int_{Q_{R}}H \notag \\
&\quad +\frac{1}{l_{0}+2}\left(\frac{1}{6}Al_{0}V\frac{l_{0}+2}{l_{0}+1}\right)^{-(l_{0}+1)}\left(\frac{2(l_{0}+2)C_{1}V}{R^{2}}\right)^{l_{0}+2} \notag \\*
&\qquad \times \int_{Q_{R}}\eta ^{l_{0}+2}v^{(l_{0}+1)(1-\beta )-\lambda }. \notag
\end{align}
Similarly,
\begin{align}
C_{1}V(\lambda +1)\int_{Q_{R}}v^{1-\lambda }F^{l_{0}+1}\left|\nabla \zeta \right|^{2}
&\leqslant \frac{(l_{0}+2)^{2}C_{1}V(\lambda +1)}{R^{2}}\int_{Q_{R}}H^{\frac{l_{0}+1}{l_{0}+2}}v^{\frac{(l_{0}+1)(2-\beta )+1-\lambda }{l_{0}+2}} \label{equ;77} \\*
&\leqslant \frac{1}{6}Al_{0}V\int_{Q_{R}}H \notag \\
&\quad +\frac{1}{l_{0}+2}\left(\frac{1}{6}Al_{0}V\frac{l_{0}+2}{l_{0}+1}\right)^{-(l_{0}+1)} \notag \\*
&\qquad \times \left(\frac{(l_{0}+2)^{2}C_{1}V(\lambda +1)}{R^{2}}\right)^{l_{0}+2}\int_{Q_{R}}v^{(l_{0}+1)(2-\beta )+1-\lambda }. \notag
\end{align}
Note that $(l_{0}+1)(1-\beta )-\lambda \geqslant 1$ since $l_{0}\geqslant \delta _{0}\geqslant \lambda /(1-\beta )$. We control the last term in \eqref{equ;73} by
\begin{equation}\label{equ;78}
\begin{aligned}
C_{1}Vl_{0}^{3}R^{-2}\int_{Q_{R}}\zeta ^{2}v^{1-\lambda }F^{l_{0}+1}
&\leqslant \frac{1}{6}Al_{0}V\int_{Q_{R}}\zeta ^{2}v^{\beta -\lambda -1}F^{l_{0}+2}\\
&\quad +C_{1}Vl_{0}^{3}R^{-2}\int_{Q_{R}}v^{1-\lambda }\left(\frac{6C_{1}l_{0}^{2}v^{2-\beta }}{AR^{2}}\right)^{l_{0}+1},
\end{aligned}
\end{equation}
as in \eqref{equ;24}. Combining \eqref{equ;73}, for $l=l_{0}$ and $\rho =R$, with \eqref{equ;76}--\eqref{equ;78}, we have
\begin{equation*}
\begin{aligned}
&\left(Ve^{-l_{0}}\left|B_{R}\right|^{2/n}R^{-2}\right)^{\frac{n}{n+2}}\left\|\zeta v^{(1-\lambda )/2}F^{(l_{0}+1)/2}\right\|_{L^{\frac{2(n+2)}{n}}(Q_{R})}^{2}\\
&\quad \leqslant l_{0}^{3}\left(\frac{C_{4}l_{0}^{2}}{AR^{2}}\right)^{l_{0}+1}\left|B_{R}\right|,
\end{aligned}
\end{equation*}
where $C_{4}=C_{4}(m,n,a,b,V)$. Since $\zeta \equiv 1$ in $Q_{3R/4}$, this implies
\begin{equation}\label{equ;79}
\left(\int_{Q_{3R/4}}v^{-\lambda }F^{\gamma _{1}}\right)^{1/\gamma _{1}}\leqslant \left(\frac{e^{l_{0}}R^{2}\left|B_{R}\right|}{V}\right)^{\frac{n}{(n+2)(l_{0}+1)}}\frac{C_{5}l_{0}^{2}}{AR^{2}}.
\end{equation}
Combining \eqref{equ;75} and \eqref{equ;79}, we conclude that
\begin{equation*}
\sup_{(x,t)\in Q_{R/2}}F(x,t)\leqslant C_{6}\frac{l_{0}^{2}}{R^{2}}\leqslant C_{7}\left(\frac{1+\sqrt{k}R}{R}\right)^{2}.
\end{equation*}
Since $F=v^{-\beta }\left|\nabla v\right|^{2}=v^{1/(m-1)}\left|\nabla v\right|^{2}$, this completes the proof.
\end{proof}
\begin{remark}
We do not require $0<R\leqslant 1$ when \eqref{equ;30} holds. Hence a Liouville-type theorem can be proved by the above gradient estimate.
\end{remark}

\begin{remark}
One of our main purposes is to obtain a Liouville-type theorem, so we take $Q_{R}=B_{R}\times (t_{0}-R^{2},t_{0}]$. If a sharper estimate is required, one may consider $Q_{R,T}=B_{R}\times (t_{0}-T,t_{0}]$. When $a=b=0$, one can prove that the gradient estimate has the form
\begin{equation}\label{equ;80}
v^{\frac{1}{2(m-1)}}\left|\nabla v\right|\leqslant C(m,n)V^{\frac{m}{2(m-1)}}\left(\frac{1}{\sqrt{T}}+V^{1/2}\left(\frac{1}{R}+\sqrt{k}\right)\right),
\end{equation}
where $0<v\leqslant V$. This reduces to Huang-Xu-Zeng's result \cite{HXZ}. 
\end{remark}
\begin{remark}
The condition (\ref{equ;30}) in Theorem \ref{thm;2} cannot in
general be removed. Consider the equation on $\mathbb{R}^{2}$
\begin{equation*}
    u_{t}=\Delta u^{3/2}+\frac{3}{2}u\log u.
\end{equation*}
Let $w=w(s)$ be the solution of
    $w''+w^{2/3}\log w=0$,
    $w(0)=1$,
    $w'(0)=\frac{1}{2}$.
Define
\begin{equation*}
    g(s)=\int_{1}^{s}t^{2/3}\log t\,dt
    =\frac{3}{5}s^{5/3}\log s
    -\frac{9}{25}s^{5/3}
    +\frac{9}{25}.
\end{equation*}
Since $g'(s)=s^{2/3}\log s$, the function $g$ attains its
minimum at $s=1$, and $g(1)=0$. On the other hand, the equation
for $w$ can be written as $w''+g'(w)=0$. Consequently,
\begin{equation}\label{equ;81}
    \frac{1}{2}(w')^{2}+g(w)
    \equiv
    \frac{1}{2}\bigl(w'(0)\bigr)^{2}+g\bigl(w(0)\bigr)
    =\frac{1}{8}.
\end{equation}
In particular,
    $g(w)\leqslant 1/8$.
Moreover,
\begin{equation*}
    \lim_{s\to 0}g(s)=\frac{9}{25}>\frac{1}{8},
    \qquad
    \lim_{s\to+\infty}g(s)=+\infty.
\end{equation*}
Thus there exist $s_{1}\in(0,1)$ and $s_{2}>1$ such that $g(s_{1})=g(s_{2})=1/8$, 
and \eqref{equ;81} implies $0<s_{1}\leqslant w(s)\leqslant s_{2}$ for all $s\in \mathbb{R}^{n}$ from the theory of ODE. Let  $u(x,y,t)=w(x)^{2/3}$. $u$ is a positive bounded solution of the above porous medium equation. In this example, condition (\ref{equ;30}) is equivalent to $u_{\max}<e^{-1}$, while $u(0,y,t)=w(0)^{2/3}=1$. It is easy to check that $u$ does not satisfy the gradient estimate in Theorem \ref{thm;2}.
\end{remark}

\begin{proof}[Proof of Corollary \ref{cor;1}]
    The most important fact is: under the assumptions of Theorem \ref{thm;2}, $u(\cdot,t)$ is spatially constant for every $t$. Thus, solving the resulting ordinary differential equation for $u$, we obtain
\begin{equation*}
\log u(x,t)=\begin{cases}C+b(t-T_0),&a=0,\\\displaystyle{-\frac{b}{a}+C e^{a(t-T_{0})}},&a\neq 0,
\end{cases}
\end{equation*}
where $C$ is a constant. We then distinguish the different cases according to the values of $a$, $b$, and the desired conclusion follows.
\end{proof}

\vspace{.2in}


\section{FDE Type Equations}\label{FDE}
\vspace{.1in}

In this section, we consider (\ref{equ;1}) for $0<m<1$. Let $v=mu^{m-1}/(1-m)$, then direct computation yields 
\begin{equation}\label{equ;43}
v_{t}-(1-m)v\Delta v=-\left|\nabla v\right|^{2}+av\log v+b'v, 
\end{equation} 
where
\begin{equation*}
b'=(m-1)b-a\log \frac{m}{1-m}. 
\end{equation*}
We introduce the differential operator  from \cite{L}
\begin{equation*}
\mathcal{L}:=\frac{\partial }{\partial t}-(1-m)v\Delta . 
\end{equation*}
We give some formulae of $\mathcal{L}$ without proof. 
\begin{lemma}\label{lemma;5}
Let $u$ be a positive solution to (\ref{equ;1}) on Riemannian manifold $(M, g)$ of dimension $n$ for some $0<m<1$. Let $v=mu ^{m-1}/(1-m)$ and $\beta $ be any constant ($\beta \neq 1$). Then we have
\begin{equation}\label{equ;44}
\mathcal{L}(v)=-\left|\nabla v\right|^{2}+av\log v+b'v, 
\end{equation}
\begin{equation}\label{equ;45}
\mathcal{L}(v^{-\beta })=\beta (1-(1-m)(\beta +1))v^{-\beta -1}\left|\nabla v\right|^{2}-\beta v^{-\beta }(a\log v+b'), 
\end{equation}
\begin{equation}\label{equ;46}
\begin{aligned}
\mathcal{L}(\left|\nabla v\right|^{2})&=-2(1-m)v\left|\nabla ^{2}v\right|^{2}-2(1-m)v\mathrm{Ric}\,(\nabla v, \nabla v)+2(1-m)\Delta v \left|\nabla v\right|^{2}\\
&\quad -2\left<\nabla v, \nabla \left|\nabla v\right|^{2} \right>+2(a\log v+b')\left|\nabla v\right|^{2}+2v\log v\left<\nabla v, \nabla a \right>\\
&\quad +2a \left|\nabla v\right|^{2}+2v\left<\nabla v, \nabla b' \right>.
\end{aligned}
\end{equation}
\end{lemma}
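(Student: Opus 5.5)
The plan is to verify the three identities by direct computation, following the proof of Lemma \ref{lemma;1} with $m-1$ replaced by $1-m$ and the sign of $|\nabla v|^2$ in the equation flipped. First, (\ref{equ;44}) is just a rewriting of (\ref{equ;43}). To obtain (\ref{equ;43}) from (\ref{equ;1}), write $u^{m-1}=\frac{1-m}{m}v$ and $u^m=u\cdot u^{m-1}$. Then compute $v_t=-m\,u^{m-2}u_t$, expand $\Delta u^m$ in terms of $v$, and use $\log u=\frac{1}{m-1}\log\frac{(1-m)v}{m}$. The logarithmic term produces $av\log v$ plus the constant shift absorbed into $b'=(m-1)b-a\log\frac{m}{1-m}$.

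For (\ref{equ;45}), I will use the chain rule. We have $\partial_t v^{-\beta}=-\beta v^{-\beta-1}v_t$ and
\[
\Delta v^{-\beta}=-\beta v^{-\beta-1}\Delta v+\beta(\beta+1)v^{-\beta-2}|\nabla v|^2 .
\]
Hence
\[
\mathcal L(v^{-\beta})=-\beta v^{-\beta-1}\mathcal L(v)-(1-m)\beta(\beta+1)v^{-\beta-1}|\nabla v|^2 .
\]
Substituting (\ref{equ;44}) then gives $\beta(1-(1-m)(\beta+1))v^{-\beta-1}|\nabla v|^2-\beta v^{-\beta}(a\log v+b')$.

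For (\ref{equ;46}), I will follow the Bochner computation exactly as in Lemma \ref{lemma;1}. Write $\partial_t|\nabla v|^2=2\langle\nabla v,\nabla v_t\rangle$ and apply Bochner,
\[
\Delta|\nabla v|^2=2|\nabla^2v|^2+2\langle\nabla v,\nabla\Delta v\rangle+2\mathrm{Ric}(\nabla v,\nabla v).
\]
Next write $v_t=\mathcal L(v)+(1-m)v\Delta v$, so that
\[
\nabla v_t=\nabla\mathcal L(v)+(1-m)\Delta v\,\nabla v+(1-m)v\nabla\Delta v .
\]
The $v\nabla\Delta v$ terms cancel, leaving
\[
2\langle\nabla v,\nabla\mathcal L(v)\rangle+2(1-m)\Delta v|\nabla v|^2-2(1-m)v|\nabla^2v|^2-2(1-m)v\mathrm{Ric}(\nabla v,\nabla v).
\]
Finally I expand $\nabla\mathcal L(v)$ using (\ref{equ;44}):
\[
\nabla\mathcal L(v)=-\nabla|\nabla v|^2+(a\log v+b'+a)\nabla v+v\log v\nabla a+v\nabla b' .
\]
Pairing this with $2\nabla v$ yields exactly the stated terms.

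There is no real obstacle here, since every step is routine algebra. The step most prone to error is the derivation of (\ref{equ;43}), where the sign of the $|\nabla v|^2$ term must come out negative. That sign is what changes $+2\langle\nabla v,\nabla|\nabla v|^2\rangle$ in (\ref{equ;5}) into $-2\langle\nabla v,\nabla|\nabla v|^2\rangle$ in (\ref{equ;46}), and it also fixes the constant in $b'$. I will check it carefully by comparing with the case $a=b=0$ from \cite{L}.
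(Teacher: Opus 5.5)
Your proposal is correct and follows essentially the same route as the paper. The paper states Lemma~\ref{lemma;5} without proof, but it intends exactly the computation of Lemma~\ref{lemma;1} with $m-1$ replaced by $1-m$: the substitution giving (\ref{equ;43}), the chain-rule identity $\mathcal{L}(v^{-\beta})=-\beta v^{-\beta-1}\mathcal{L}(v)-(1-m)\beta(\beta+1)v^{-\beta-1}|\nabla v|^{2}$, and the Bochner formula with $v_t=\mathcal{L}(v)+(1-m)v\Delta v$; all three of your computations check out, including the negative sign of $|\nabla v|^2$ in (\ref{equ;43}) and the value $b'=(m-1)b-a\log\frac{m}{1-m}$, which comes only from the logarithmic substitution and not from that sign.
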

\begin{theorem}\label{thm;3}
Let $(M, g)$ be a Riemannian manifold of dimension $n\geqslant 2$ with $\mathrm{Ric}\,_{M}\geqslant -k$ for some $k\geqslant 0$. Suppose that $u$ is any positive solution to (\ref{equ;1}) on $Q_{R}=B_{R}\times (t_{0}-R^{2}, t_{0}]$,  where $1-\frac{2}{n}<m<1$. Suppose also that $u$ is bounded from below and above by positive constants. Assume that for $0<R\leqslant 1$, $a$, $b$, $\left|\nabla a\right|$ and $\left|\nabla b\right|$ are bounded functions on $Q_{R}$, and denote
\begin{equation*}
\sup_{Q_{R}}\left|a\right|=D_{1}\quad ,\quad \sup_{Q_{R}}\left|b\right|=D_{2}\quad ,\quad \sup_{Q_{R}}\left|\nabla a\right|=D_{3}\quad ,\quad \sup_{Q_{R}}\left|\nabla b\right|=D_{4}. 
\end{equation*}
Let $v=m u^{m-1}/(1-m)$, then we have
\begin{equation}\label{equ;47}
\sup_{(x, t)\in Q_{R/2}}\left( v^{-\frac{2-m}{4(1-m)}}\left|\nabla v\right|+v^{-\frac{2-m}{4(1-m)}} \right) \leqslant C \frac{1+\sqrt{k}R}{R}, 
\end{equation}
where $C=C(m, n, D_{1}, D_{2}, D_{3}, D_{4}, v_{\max}, v_{\min})$ is a constant. 

\begin{proof}
    We only prove for $n\geqslant 3$. The idea of the proof is the same as that of Theorem \ref{thm;R1}, so that we may omit some details. We write $b'$ as $b$ for simplicity. For any $\displaystyle{\frac{R}{2}\leqslant \rho \leqslant R}$ and $t_{0}-\rho ^{2}<\tau \leqslant t_{0}$, we denote
    \begin{equation*}
    Q_{\rho }=B_{\rho }\times (t_{0}-\rho ^{2}, t_{0}] \quad \text{and}\quad Q_{\rho , \tau }=B_{\rho }\times (t_{0}-\rho ^{2}, \tau ]. 
    \end{equation*}
    Let
    \begin{equation*}
    f=\left|\nabla v\right|^{2}+1 \quad \text{and}\quad F=v^{-\beta }f=\frac{\left|\nabla v\right|^{2}+1}{v^{\beta }}, 
    \end{equation*}
    where $\beta $ is a (positive) constant to be determined later. Applying Lemma \ref{lemma;5} we have
    \begin{align*}
\mathcal{L}(F)&=f\mathcal{L}(v^{-\beta })+v^{-\beta }\mathcal{L}(f)-2(1-m)v\left<\nabla v^{-\beta },\nabla f \right> \\*
&=\beta (1-(1-m)(\beta +1))v^{-\beta -1}f(f-1)-\beta v^{-\beta }(a\log v+b)f \\
&\quad -2(1-m)v^{-\beta +1}\left|\nabla ^{2}v\right|^{2}-2(1-m)v^{-\beta +1}\mathrm{Ric}\,(\nabla v, \nabla v) \\
&\quad +2(1-m)v^{-\beta }(f-1)\Delta v-2v^{-\beta }\left<\nabla v, \nabla f \right>+2(a\log v+b)v^{-\beta }(f-1) \\
&\quad +2v^{-\beta +1}\log v\left<\nabla v,  \nabla a \right>+2av^{-\beta }(f-1)+2v^{-\beta +1}\left<\nabla v, \nabla b \right> \\*
&\quad +2(1-m)\beta v^{-\beta }\left<\nabla v, \nabla f \right>.
\end{align*}
    Hence
    \begin{align*}
-\mathcal{L}(F)&\geqslant \frac{2(1-m)}{n}v^{-\beta +1}(\Delta v)^{2}-2(1-m)v^{-\beta }(f-1)\Delta v-2(1-m)kv^{-\beta +1}(f-1) \\*
&\quad -\beta (1-(1-m)(\beta +1))v^{-\beta -1}f^{2}+2(1-\beta (1-m))v^{-\beta }\left<\nabla v, \nabla f \right> \\
&\quad -(2-\beta )(a\log v+b)v^{-\beta }f-2v^{-\beta +1}\log v\left<\nabla v, \nabla a \right>-2av^{-\beta }f+2av^{-\beta } \\
&\quad -2v^{-\beta +1}\left<\nabla v, \nabla b \right>+\beta (1-(1-m)(\beta +1))v^{-\beta -1}f+2(a\log v+b)v^{-\beta } \\
&\geqslant \left\{ -(1-m)\beta ^{2}+(2-m)\beta -\frac{n(1-m)}{2} \right\} v^{-\beta -1}f^{2}+n(1-m)v^{-\beta -1}f \\
&\quad -\frac{n(1-m)}{2}v^{-\beta -1}+2(1-\beta (1-m))\left<\nabla F, \nabla v \right>-2(1-m)kv^{-\beta +1}f \\
&\quad +2(1-m)kv^{-\beta +1}-2\beta (1-\beta (1-m))v^{-\beta -1}f-C_{1}v^{-\beta }(f+\left|\nabla a\right|^{2}) \\
&\quad -2\left|a\right|v^{-\beta }f-v^{-\beta +1}f-\left|\nabla b\right|^{2}v^{-\beta +1}-(2+\beta )\left( \frac{C_{1}\left|a\right|}{v}+\left|b\right| \right) v^{-\beta }f \\
&\quad +\beta (1-(1-m)(\beta +1))v^{-\beta -1}f-2\left( \frac{C_{1}\left|a\right|}{v}+\left|b\right| \right) v^{-\beta }-2\left|a\right|v^{-\beta } \\
&\geqslant \left\{ -(1-m)\beta ^{2}+(2-m)\beta -\frac{n(1-m)}{2} \right\} v^{-\beta -1}f^{2} \\
&\quad -\left\{ -(1-m)\beta ^{2}+(2-m)\beta -\frac{n(1-m)}{2}+(4+\beta )C_{1}\left|a\right| \right\} v^{-\beta -1}f \\
&\quad -\left\{ C_{1}(1+\left|\nabla a\right|^{2})+4\left|a\right|+v+\left|\nabla b\right|^{2}v+(4+\beta )\left|b\right| \right\} v^{-\beta }f \\*
&\quad +2(1-\beta (1-m))\left<\nabla F, \nabla v \right>-2(1-m)kv^{-\beta +1}f,
\end{align*}
    where $C_{1}=\max\left\{ e^{-1}, v_{\max}\left|\log v_{\max}\right| \right\} $ and we take $\displaystyle{\beta =\frac{1}{2}\frac{2-m}{1-m}}>0$. One readily checks that
    \begin{equation*}
    A=-(1-m)\beta ^{2}+(2-m)\beta -\frac{n(1-m)}{2}>0
    \end{equation*}
    when $\displaystyle{1-\frac{2}{n}<m<1}$. Hence we conclude that 
    \begin{equation}\label{equ;48}
    \mathcal{L}(F)+Av^{-\beta -1}f^{2}\leqslant C_{2}v^{-\beta -1}f-2(1-\beta (1-m))\left<\nabla F, \nabla v \right>+2(1-m)kv^{-\beta +1}f, 
    \end{equation}
    where $C_{2}=C_{2}(m, n, \beta , D_{1}, D_{2}, D_{3}, D_{4}, v_{\max})$ and $C_{2}\geqslant A$. 
    
    Next, let $\zeta $ be a cut-off function supported on $Q_{\rho }$. For any $l\geqslant 0$ we multiply by $\zeta ^{2}F^{l}$ on both sides of (\ref{equ;48}) and integrate on $Q_{\rho , \tau }$. Integrating by parts implies
    \begin{equation}\label{equ;49}
    \begin{aligned}
    &\frac{1}{l+1}\int_{B_{\rho }}\zeta ^{2}F^{l+1}(x, \tau )+(1-m)l\int \zeta ^{2}vF^{l-1}\left|\nabla F\right|^{2}+A\int \zeta ^{2}v^{\beta -1}F^{l+2}\\
    &\quad \leqslant \frac{1}{l+1}\int \frac{\partial \zeta ^{2}}{\partial t}F^{l+1}+C_{3}\int \zeta ^{2}v^{-1}F^{l+1}-\int \zeta ^{2}F^{l}\left<\nabla F, \nabla v \right>\\
    &\quad \quad -2(1-m)\int \zeta vF^{l}\left<\nabla F, \nabla \zeta  \right>+2(1-m)k\int \zeta ^{2}vF^{l+1}.
    \end{aligned}
    \end{equation}
    This inequality is almost the same as (\ref{equ;14}) formally. One can repeat the remaining argument and derive (we must emphasize that the detail may be slightly different since $\beta $ is now greater than $0$, which makes some $v_{\max}$ become $v_{\min}$ or conversely)
    \begin{equation}\label{equ;50}
    \sup_{(x, t)\in Q_{R/2}}F(x, t)\leqslant C_{3} \left( \frac{e^{l_{0}}}{R^{4/n}\left|B_{R}\right|^{2/n}} \right) ^{\frac{n^{2}}{2(n+2)(l_{0}+1)}}\left\|F\right\| _{L^{\gamma_{1}}(Q_{3R/4})}, 
    \end{equation}
    and
    \begin{equation}\label{equ;51}
    \left\|F\right\|_{L^{\gamma_{1}}(Q_{3R/4})}\leqslant C_{4}\left( e^{l_{0}}R^{2}\left|B_{R}\right| \right) ^{\frac{n}{(n+2)(l_{0}+1)}}\frac{l_{0}^{2}}{R^{2}}, 
    \end{equation}
    where $C_{3}$ and $C_{4}$ depend on $m, n, D_{1}, D_{2}, D_{3}, D_{4}, v_{\max}, v_{\min}$. $l_{0}$ is chosen by
    \begin{equation*}
    l_{0}=\delta_{0}(1+\sqrt{k}R)\quad ,\quad \delta_{0}=\delta_{0}(m, n)=\max\left\{ C_{n}+2, \frac{2}{(1-m)A}, n \right\} . 
    \end{equation*}
    Finally, we combine (\ref{equ;51}) with (\ref{equ;50}) and derive
    \begin{equation*}
    \sup_{(x, t)\in Q_{R/2}}F(x, t)\leqslant C_{5} \left( \frac{1+\sqrt{k}R}{R} \right) ^{2}. 
    \end{equation*}
    Since $\displaystyle{F=v^{-\beta }(\left|\nabla v\right|^{2}+1)=v^{-\frac{2-m}{2(1-m)}}\left|\nabla v\right|^{2}+v^{-\frac{2-m}{2(1-m)}}}$, this completes the proof. 
\end{proof}
\end{theorem}

\allowdisplaybreaks[4]
\raggedbottom
\setlength{\emergencystretch}{2em}
\begin{theorem}\label{thm;4}
Let $(M,g)$ be a complete Riemannian manifold of dimension $n\geqslant2$ without boundary and with $\mathrm{Ric}\,_{M}\geqslant-k$ for some $k\geqslant0$. Suppose that $u$ is any positive smooth solution to (\ref{equ;1}) on $Q_R=B_R\times(t_0-R^2,t_0]$, where $1-1/\sqrt{n-1}<m<1$ and $a,b$ are constants. Suppose also that $0<u\leqslant M$. When $m<1/2$, assume in addition that $u\geqslant u_*>0$. Let $v=mu^{m-1}/(1-m)$. If
\begin{equation}\label{equ;52}
 (2m-1)(a\log u+b)+2a\leqslant0\qquad\text{on }Q_R,
\end{equation}
or equivalently, one of the following conditions holds,
\begin{alignat*}{2}
m>\frac12,&\quad a>0, &\hspace{10pt} a\log u_{\max}+\frac{2a}{2m-1}+b&\leqslant0, \\*
m>\frac12,&\quad a=0, &\hspace{10pt} b&\leqslant0, \\
m>\frac12,&\quad a<0, &\hspace{10pt} a\log u_{\min}+\frac{2a}{2m-1}+b&\leqslant0, \\
m=\frac12,&\quad a\leqslant0,&\hspace{10pt}& \\
m<\frac12,&\quad a>0, &\hspace{10pt} a\log u_{\min}+\frac{2a}{2m-1}+b&\geqslant0, \\
m<\frac12,&\quad a=0, &\hspace{10pt} b&\geqslant0, \\*
m<\frac12,&\quad a<0, &\hspace{10pt} a\log u_{\max}+\frac{2a}{2m-1}+b&\geqslant0,
\end{alignat*}
Then we have
\begin{equation}\label{equ;53}
 \sup_{(x,t)\in Q_{R/2}}v^{-\frac{1}{2(1-m)}}|\nabla v|
 \leqslant C\frac{1+\sqrt{k}R}{R}.
\end{equation}
Here $u_{\min}=\inf_{Q_R}u$, $u_{\max}=\sup_{Q_R}u$; conditions involving $\log u_{\min}$ include $u_{\min}>0$. The constant is $C=C(m,n,M)$ when $m\geqslant1/2$, whereas $C=C(m,n,M,u_*)$ when $m<1/2$. In particular, no independent positive lower bound for $u$ is required when $m\geqslant1/2$.
\begin{remark}
When $n\geqslant5$, $m>1-1/\sqrt{n-1}\geqslant1/2$.
\end{remark}
\begin{proof}
Let  $v\geqslant v_0:=\frac{m}{1-m}M^{m-1}>0$, and  
\begin{equation*}
 f=|\nabla v|^2,\qquad F=v^{-\beta}f=\frac{|\nabla v|^2}{v^\beta},
\end{equation*}
where $\beta$ is a positive constant to be determined. By Lemma \ref{lemma;5},
\begin{equation*}
\begin{aligned}
 \mathcal L(F)
 &=\beta[1-(1-m)(\beta+1)]v^{-\beta-1}f^2
   -\beta(a\log v+b')v^{-\beta}f-2(1-m)v^{1-\beta}|\nabla^2v|^2\\
   &\quad-2(1-m)v^{1-\beta}\mathrm{Ric}\,(\nabla v,\nabla v)+2(1-m)v^{-\beta}f\Delta v-2v^{-\beta}\langle\nabla v,\nabla f\rangle\\
 &\quad+2(a\log v+b')v^{-\beta}f+2av^{-\beta}f
   +2(1-m)\beta v^{-\beta}\langle\nabla v,\nabla f\rangle.
\end{aligned}
\end{equation*}
For a real number $\varepsilon$ to be determined later, direct computation yields
\begin{align*}
-\mathcal L(F)+\varepsilon\langle\nabla F,\nabla v\rangle
 &=\beta[-1+(1-m)(\beta+1)-\varepsilon]v^{-\beta-1}f^2 \\*
&\quad-[(2-\beta)(a\log v+b')+2a]v^{-\beta}f \\
&\quad+2(1-m)v^{1-\beta}v_{ij}^2
       -2(1-m)v^{-\beta}fv_{jj} \\
&\quad+[2\varepsilon+4(1-\beta(1-m))]v^{-\beta}v_iv_jv_{ij} \\*
&\quad+2(1-m)v^{1-\beta}\mathrm{Ric}\,(\nabla v,\nabla v).
\end{align*}
At points where $f>0$, put $A=(v_{ij})$ and $e=\nabla v/|\nabla v|$. Completing the square and applying Lemma \ref{lemma;4}, we obtain
\begin{align}
&-\mathcal L(F)+\varepsilon\langle\nabla F,\nabla v\rangle
       +[(2-\beta)(a\log v+b')+2a]v^{-\beta}f \label{equ;54} \\*
&\quad\geqslant-\frac{v^{-\beta-1}f^2}{2(1-m)}
   \left[(\varepsilon+2(1-\beta(1-m)))\frac{A(e,e)}{|A|}
          -(1-m)\frac{\mathrm{tr}\,A}{|A|}\right]^2 \notag \\
&\qquad\quad+\beta[-1+(1-m)(\beta+1)-\varepsilon]v^{-\beta-1}f^2
            -2(1-m)kv^{1-\beta}f \notag \\
&\quad\geqslant-\frac{1}{2(1-m)}
  \bigl\{2(1-m)^2\beta^2-2(1-m)(m+2+\varepsilon)\beta \notag \\*
&\hspace{30mm}+(m+1+\varepsilon)^2+(n-1)(1-m)^2\bigr\}
        v^{-\beta-1}f^2-2(1-m)kv^{1-\beta}f. \notag
\end{align}
Take
\begin{equation*}
 \beta =\frac{m+2+\varepsilon }{2(1-m)}\left( =\frac{1}{1-m} \right),\qquad \varepsilon=-m,\qquad
 A=\frac{1-(n-1)(1-m)^2}{2(1-m)}>0.
\end{equation*}
Then (\ref{equ;54}) becomes
\begin{equation}\label{equ;55}
\begin{aligned}
 \mathcal L(F)+Av^{\beta-1}F^2
 &\leqslant-m\langle\nabla F,\nabla v\rangle+2(1-m)kvF\\
 &\quad+[(2-\beta)(a\log v+b')+2a]F.
\end{aligned}
\end{equation}
The last coefficient equals the left-hand side of (\ref{equ;52}). We retain it until after the weighted time derivative has been computed.

Choose $0\leqslant\zeta\leqslant1$, equal to one on $Q_{R/2}$, supported in $B_{3R/4}\times(t_0-R^2,t_0]$, and zero near $t_0-R^2$, such that
\begin{equation}\label{equ;82}
 |\zeta_t|\leqslant\frac{C(n)}{R^2},\qquad
 \frac{|\nabla\zeta|^2}{\zeta}\leqslant\frac{C(n)}{R^2},\qquad
 -\Delta\zeta\leqslant C(n)\left(\frac1{R^2}+\frac{\sqrt{k}}R\right).
\end{equation}
Such a cut-off function is obtained from a squared nonincreasing distance cutoff and a time cutoff by Laplacian comparison. The Laplacian inequality is understood distributionally at the cut locus. Set
\begin{equation}\label{equ;83}
 H=\zeta F.
\end{equation}
On $\{\zeta>0\}$, the product rule yields
\begin{equation}\label{equ;84}
\begin{aligned}
 \mathcal L(H)+A\frac{v^{\beta-1}}{\zeta}H^2
 &\leqslant-m\left<\nabla v,\nabla H\right>-\frac{2(1-m)v}{\zeta}\left<\nabla\zeta,\nabla H\right>\\
 &\quad+\left[2(1-m)kv+\frac{\zeta_t}{\zeta}
             -(1-m)v\frac{\Delta\zeta}{\zeta}
             +2(1-m)v\frac{|\nabla\zeta|^2}{\zeta^2}\right]H\\
 &\quad+m\frac{\langle\nabla\zeta,\nabla v\rangle}{\zeta}H
       +[(2-\beta)(a\log v+b')+2a]H.
\end{aligned}
\end{equation}
Let $\lambda>0$ and $p=l+1>2$. Multiply (\ref{equ;84}) by $\zeta v^{-\lambda}H^l$ and integrate on $Q_{R,\tau}=B_R\times(t_0-R^2,\tau]$. The time term is
\begin{align*}
\int\zeta v^{-\lambda}H^lH_t
 &=\frac1p\int_{B_R}\zeta v^{-\lambda}H^p(x,\tau)
   -\frac1p\int\zeta_t v^{-\lambda}H^p
   +\frac\lambda p\int\zeta v^{-\lambda-1}v_tH^p, \\*
\frac\lambda p\int\zeta v^{-\lambda-1}v_tH^p
 &=-\frac{\lambda(1-m)}p\int v^{-\lambda}H^p
                         \langle\nabla\zeta,\nabla v\rangle \\
&\quad-\lambda(1-m)\int\zeta v^{-\lambda}H^{p-1}
                         \langle\nabla H,\nabla v\rangle \\
&\quad+\frac{\lambda[(1-m)\lambda-1]}p
          \int v^{\beta-\lambda-1}H^{p+1} \\*
&\quad+\frac\lambda p\int\zeta(a\log v+b')v^{-\lambda}H^p.
\end{align*}
Here we used $\zeta|\nabla v|^2=v^\beta H$ and
\begin{equation*}
\begin{aligned}
 \int\zeta v^{-\lambda}H^p\Delta v
 &=-\int v^{-\lambda}H^p\langle\nabla\zeta,\nabla v\rangle
    +\lambda\int v^{\beta-\lambda-1}H^{p+1}\\
 &\quad-p\int\zeta v^{-\lambda}H^{p-1}\langle\nabla H,\nabla v\rangle.
\end{aligned}
\end{equation*}
Integrating the Laplacian term by parts implies
\begin{equation*}
\begin{aligned}
 &-(1-m)\int\zeta v^{1-\lambda}H^l\Delta H\\
 &\quad=(1-m)l\int\zeta v^{1-\lambda}H^{p-2}|\nabla H|^2
       +(1-m)\int v^{1-\lambda}H^{p-1}\langle\nabla\zeta,\nabla H\rangle\\
 &\qquad\quad+(1-m)(1-\lambda)\int\zeta v^{-\lambda}H^{p-1}
                                      \langle\nabla v,\nabla H\rangle.
\end{aligned}
\end{equation*}
Combining these identities, we deduce
\begin{align}
&\frac1p\int_{B_R}\zeta v^{-\lambda}H^p(x,\tau)
 +(1-m)l\int\zeta v^{1-\lambda}H^{p-2}|\nabla H|^2 \label{equ;56} \\*
&\quad+\left(A+\frac{\lambda[(1-m)\lambda-1]}p\right)
                  \int v^{\beta-\lambda-1}H^{p+1} \notag \\
&\leqslant[2(1-m)\lambda-1]\int\zeta v^{-\lambda}H^{p-1}
                                  \langle\nabla H,\nabla v\rangle \notag \\
&\quad-3(1-m)\int v^{1-\lambda}H^{p-1}\langle\nabla H,\nabla\zeta\rangle
    +\left(m+\frac{(1-m)\lambda}p\right)
             \int v^{-\lambda}H^p\langle\nabla\zeta,\nabla v\rangle \notag \\
&\quad+\int\left[\left(1+\frac1p\right)\zeta_t v^{-\lambda}
                  -(1-m)v^{1-\lambda}\Delta\zeta\right]H^p \notag \\
&\quad+2(1-m)\int v^{1-\lambda}
                     \left(\frac{|\nabla\zeta|^2}{\zeta}+k\zeta\right)H^p \notag \\*
&\quad+\int\zeta v^{-\lambda}
       \left[\left(2-\beta-\frac\lambda p\right)(a\log v+b')+2a\right]H^p. \notag
\end{align}
Now choose
\begin{equation}\label{equ;85}
 \lambda=\beta-1=\frac{m}{1-m}.
\end{equation}
Then $v^{\beta-\lambda-1}=1$, $\lambda[(1-m)\lambda-1]=-m$. By Cauchy--Schwarz, 
\begin{align}
&(2m-1)\int\zeta v^{-\lambda}H^{p-1}
                    \langle\nabla H,\nabla v\rangle
 \leqslant\frac{(1-m)l}{4}\int\zeta v^{1-\lambda}H^{p-2}|\nabla H|^2
          +\frac{(2m-1)^2}{(1-m)l}\int H^{p+1}, \label{equ;86} \\*[4pt]
&-3(1-m)\int v^{1-\lambda}H^{p-1}
                    \langle\nabla H,\nabla\zeta\rangle
 \leqslant\frac{(1-m)l}{4}\int\zeta v^{1-\lambda}H^{p-2}|\nabla H|^2+\frac{9(1-m)}l\int v^{1-\lambda}\frac{|\nabla\zeta|^2}{\zeta}H^p, \notag \\*[4pt]
&m\left(1+\frac1p\right)
      \int v^{-\lambda}H^p\langle\nabla\zeta,\nabla v\rangle
 \leqslant\frac 14  A\int H^{p+1}
          +\frac{m^2(1+1/p)^2}{A}\int v^{1-\lambda}\frac{|\nabla\zeta|^2}{\zeta}H^p. \notag
\end{align}
Set
\begin{equation*}
 n'=\begin{cases}n,&n\geqslant3,\\4,&n=2,\end{cases}
 \qquad \chi=\frac{n'+2}{n'},
\end{equation*}
and choose
\begin{equation}\label{equ;87}
 l_0=\max\left\{2,\frac{n'}2,\frac{8m}{A},
            \frac{8(2m-1)^2}{(1-m)A}\right\}.
\end{equation}
For $l\geqslant l_0$, the remaining highest-order coefficient is
\begin{equation*}
 \frac{3A}{4}-\frac{m}{p}-\frac{(2m-1)^2}{(1-m)l}\geqslant\frac A2.
\end{equation*}
By (\ref{equ;52}), the full reaction term is bounded by
\begin{equation}\label{equ;88}
\begin{aligned}
 &\zeta v^{-\lambda}
       \left[\left(2-\beta-\frac\lambda p\right)(a\log v+b')+2a\right]H^p
       \leqslant\frac Lp H^p,\\
 &L=\lambda\sup_{s\geqslant v_0}\frac{|a\log s+b'|}{s^\lambda}<\infty.
\end{aligned}
\end{equation}
Let
\begin{equation*}
 P_\beta=\begin{cases}
 v_0^{2-\beta},&m\geqslant1/2,\\
 V^{2-\beta},&m<1/2,\quad V=\dfrac{m}{1-m}u_*^{m-1}.
 \end{cases}
\end{equation*}
Then $v^{-\lambda}\leqslant v_0^{1-\beta}$ and $v^{1-\lambda}\leqslant P_\beta$. It follows from (\ref{equ;82}), (\ref{equ;56}) and (\ref{equ;86})--(\ref{equ;88}) that
\begin{equation}\label{equ;89}
 \frac1p\int_{B_R}\zeta v^{-\lambda}H^p(x,\tau)
 +\frac{(1-m)l}{2}\int\zeta v^{1-\lambda}H^{p-2}|\nabla H|^2+\frac A2\int H^{p+1}
 \leqslant\left(K+\frac Lp\right)\int H^p,
\end{equation}
where, with $C_1=C_1(m,n)$ sufficiently large,
\begin{equation}\label{equ;90}
 K=C_1\left\{\frac{v_0^{1-\beta}}{R^2}
                  +P_\beta\left(\frac1{R^2}+k\right)\right\}>0.
\end{equation}
We have used $\sqrt{k}/R\leqslant(R^{-2}+k)/2$.

We next estimate the initial integral and perform the power iteration. Denote
\begin{equation*}
 J_q=\int_{Q_R}H^q\,d\mu dt,\qquad
 T(p)=\frac2A\left(K+\frac Lp\right),\qquad p_0=l_0+2.
\end{equation*}
Taking $\tau=t_0$ in (\ref{equ;89})  and omitting the two
nonnegative terms on the left yields
\begin{equation}\label{equ;92}
 J_{p+1}\leqslant T(p)J_p\qquad(p\geqslant l_0+1).
\end{equation}
The integrals are finite since the support is contained in a compact subcylinder on which the solution is smooth and positive. H\"older's inequality gives
\begin{equation*}
 J_{p_0}\leqslant T(p_0-1)J_{p_0-1}
 \leqslant T(p_0-1)|Q_R|^{1/p_0}J_{p_0}^{(p_0-1)/p_0}.
\end{equation*}
If $J_{p_0}=0$, the result is immediate. Otherwise,
\begin{equation}\label{equ;93}
 J_{p_0}^{1/p_0}\leqslant T(p_0-1)|Q_R|^{1/p_0}.
\end{equation}
Set $p_j=p_0\chi^j$ and $Y_j=J_{p_j}^{1/p_j}$. Since $p_{j+1}-p_j\geqslant1$, H\"older's inequality with $\theta_j=(p_{j+1}-p_j)^{-1}$ gives
\begin{equation*}
 J_{p_{j+1}-1}\leqslant J_{p_j}^{\theta_j}J_{p_{j+1}}^{1-\theta_j}.
\end{equation*}
Combining this with (\ref{equ;92}) at $p=p_{j+1}-1$, we obtain
\begin{equation}\label{equ;94}
\begin{aligned}
 J_{p_{j+1}}&\leqslant T(p_{j+1}-1)^{p_{j+1}-p_j}J_{p_j},\\
 Y_{j+1}&\leqslant T(p_{j+1}-1)^{1-1/\chi}Y_j^{1/\chi}.
\end{aligned}
\end{equation}
Consequently,
\begin{equation}\label{equ;95}
 Y_j\leqslant Y_0^{\chi^{-j}}
     \prod_{i=0}^{j-1}T(p_{i+1}-1)^{(1-1/\chi)\chi^{-(j-1-i)}}.
\end{equation}
The exponents in the product sum to $1-\chi^{-j}$ and $T(p_{i+1}-1)\to2K/A$. For every $\epsilon>0$, the factors after a fixed index are at most $2K/A+\epsilon$; the total exponent of the finitely many earlier factors tends to zero. Thus
\begin{equation*}
 \limsup_{j\to\infty}Y_j\leqslant\frac{2K}{A}.
\end{equation*}
On the fixed finite cylinder $Q_R$, $\lim_{j\to\infty}J_{p_j}^{1/p_j}=\sup_{Q_R}H$. Since $\zeta=1$ on $Q_{R/2}$, it follows that
\begin{equation}\label{equ;96}
 \sup_{Q_{R/2}}F\leqslant\frac{2K}{A}
 \leqslant C(m,n)\left\{\frac{v_0^{1-\beta}}{R^2}
              +P_\beta\left(\frac1{R^2}+k\right)\right\}.
\end{equation}
Taking square roots proves (\ref{equ;53}).
\end{proof}
\end{theorem}

By a slight modification of the proof of Theorem \ref{thm;4}, we can derive another Hamilton type gradient estimate without a condition of the form (\ref{equ;52}).

\begin{theorem}\label{thm;5}
Let $(M,g)$ be a complete Riemannian manifold of dimension $n\geqslant2$ without boundary and with $\mathrm{Ric}\,_{M}\geqslant-k$ for some $k\geqslant0$. Suppose that $u$ is any positive smooth solution to (\ref{equ;1}) on $Q_R=B_R\times(t_0-R^2,t_0]$, where $(n-1)/(n+3)<m<1$ and $a,b$ are constants, $a\leqslant0$. Suppose also that $0<u\leqslant M$. Let $v=mu^{m-1}/(1-m)$. Then we have
\begin{equation}\label{equ;63}
 \sup_{(x,t)\in Q_{R/2}}\frac{|\nabla v|}{v}
 \leqslant C\frac{1+\sqrt{k}R}{R},
\end{equation}
where $C=C(m,n,M)$ is a constant, independent of $\inf_{Q_R}u$ and of $a,b$.
\begin{proof}
Let $v_0=\frac{m}{1-m}M^{m-1}>0$, $f=|\nabla v|^2$ and $F=v^{-\beta}f$. As in the proof of Theorem \ref{thm;4}, we have
\begin{equation}\label{equ;64}
\begin{aligned}
 &-\mathcal L(F)+\varepsilon\langle\nabla F,\nabla v\rangle
       +[(2-\beta)(a\log v+b')+2a]v^{-\beta}f\\
 &\quad\geqslant-\frac{1}{2(1-m)}
   \bigl\{2(1-m)^2\beta^2-2(1-m)(m+2+\varepsilon)\beta\\
 &\hspace{44mm}+(m+1+\varepsilon)^2+(n-1)(1-m)^2\bigr\}
          v^{-\beta-1}f^2-2(1-m)kv^{1-\beta}f.
\end{aligned}
\end{equation}
Take $\beta=2$ and $\varepsilon=1-3m$. Then
\begin{equation}\label{equ;65}
 \mathcal L(F)+AvF^2
 \leqslant(1-3m)\langle\nabla F,\nabla v\rangle
             +2(1-m)kvF+2aF,
\end{equation}
where $A=2-(n+3)(1-m)/2>0$ by assumption. We still keep the term $2aF$.

Choose $\zeta$ with the properties in (\ref{equ;82}), and $H=\zeta F$. For a general $\lambda>0$, multiply the differential inequality for $H$ by $\zeta v^{-\lambda}H^l$, with $p=l+1$. The highest-order integral has weight $v^{1-\lambda}$. We therefore take $\lambda=1$, obtaining
\begin{align}
&\frac1p\int_{B_R}\frac{\zeta}{v}H^p(x,\tau)
 +(1-m)l\int\zeta H^{p-2}|\nabla H|^2
 +\left(A-\frac mp\right)\int H^{p+1} \label{equ;97} \\*
&\leqslant(2-4m)\int\frac{\zeta}{v}H^{p-1}
                          \langle\nabla H,\nabla v\rangle
       -3(1-m)\int H^{p-1}\langle\nabla H,\nabla\zeta\rangle \notag \\
&\quad+\left(3m-1+\frac{1-m}{p}\right)
              \int\frac{H^p}{v}\langle\nabla\zeta,\nabla v\rangle \notag \\
&\quad+\int\left[\left(1+\frac1p\right)\frac{\zeta_t}{v}
                         -(1-m)\Delta\zeta\right]H^p \notag \\*
&\quad+2(1-m)\int\left(\frac{|\nabla\zeta|^2}{\zeta}+k\zeta\right)H^p
       +\int\frac{\zeta}{v}\left[2a-\frac{a\log v+b'}p\right]H^p. \notag
\end{align}
The assumptions $a\leqslant0$ and $v\geqslant v_0$ imply
\begin{equation}\label{equ;98}
 \frac{\zeta}{v}\left[2a-\frac{a\log v+b'}p\right]H^p
 \leqslant\frac Lp H^p,
 \qquad L=\sup_{s\geqslant v_0}\frac{|a\log s+b'|}{s}<\infty.
\end{equation}
In particular, no sign of $a\log v+b'$ is required. Choose
\begin{equation*}
 l_0=\max\left\{2,\frac{n'}2,\frac{8m}{A},
                  \frac{8(2-4m)^2}{(1-m)A}\right\},
 \qquad p_0=l_0+2,
\end{equation*}
where $n'=n$ for $n\geqslant3$ and $n'=4$ for $n=2$.
For $l\geqslant l_0$ the coefficient
\begin{equation*}
 \frac{3A}{4}-\frac mp-\frac{(2-4m)^2}{(1-m)l}
 \geqslant\frac A2.
\end{equation*}
Following the argument as in the proof in \ref{thm;4}, and using (\ref{equ;82}) and (\ref{equ;98}), we obtain
\begin{equation*}
 \frac1p\int_{B_R}\frac{\zeta}{v}H^p(x,\tau)
 +\frac{(1-m)l}{2}\int\zeta H^{p-2}|\nabla H|^2+\frac A2\int H^{p+1}
 \leqslant\left(K+\frac Lp\right)\int H^p,
\end{equation*}
where
\begin{equation}\label{equ;99}
 K=C_1(m,n)\left(\frac{v_0^{-1}}{R^2}+\frac1{R^2}+k\right)>0.
\end{equation}
Then the standard iteration implies
\begin{equation*}
 \sup_{Q_{R/2}}\frac{|\nabla v|^2}{v^2}
 \leqslant\sup_{Q_R}H
 \leqslant\frac{2K}{A}
 \leqslant C(m,n)\left(\frac{v_0^{-1}}{R^2}+\frac1{R^2}+k\right).
\end{equation*}
Taking square roots proves (\ref{equ;63}).
\end{proof}
\end{theorem}

Now we can derive a Liouville type theorem for FDE type equations from Theorem \ref{thm;4} and Theorem \ref{thm;5}.

\begin{proof}[Proof of Corollary \ref{cor;R4}]
We first clarify the relationship between $1-\frac{1}{\sqrt{n-1}}$, $\frac{n-1}{n+3}$ and $\frac12$. Direct computation yields
\begin{equation*}
 \frac{n-1}{n+3}-\left(1-\frac1{\sqrt{n-1}}\right)
 =\frac1{\sqrt{n-1}}-\frac4{n+3}
 =\frac{(\sqrt{n-1}-2)^2}{(n+3)\sqrt{n-1}}\geqslant0,
\end{equation*}
where equality holds if and only if $n=5$. Hence
\begin{equation}\label{equ;66}
\begin{cases}
 \dfrac{n-1}{n+3}>1-\dfrac1{\sqrt{n-1}}>\dfrac12,&n>5,\\[1mm]
 \dfrac{n-1}{n+3}=1-\dfrac1{\sqrt{n-1}}=\dfrac12,&n=5,\\[1mm]
 1-\dfrac1{\sqrt{n-1}}<\dfrac{n-1}{n+3}<\dfrac12,&2\leqslant n\leqslant4.
\end{cases}
\end{equation}

Whenever the hypotheses of Theorem \ref{thm;4} or Theorem \ref{thm;5} hold on $M\times\mathbb{R}$ with the required global bounds, the constant in the gradient estimate is independent of $R$. Taking $k=0$ and letting $R\to\infty$ at any fixed $(x_0,t_0)$, we obtain $\nabla v=0$. Solving the ODE with respect to $t$, we derive
\begin{equation*}
 u(x,t)=
 \begin{cases}
  e^{(C+bt)},&a=0,\\
  e^{(-b/a+Ce^{at})},&a\neq0,
 \end{cases}
\end{equation*}
where $C$ is a constant. If $a=0$, boundedness from above on the whole time axis forces $b=0$. If $a\neq0$, the same boundedness forces $C\leqslant0$. Writing $C=-C_0$, we have $C_0\geqslant0$; when $C_0>0$, the infimum of $U$ on $\mathbb{R}$ is zero and its supremum is $e^{-b/a}$. In particular, a positive lower bound forces $C_0=0$.

By Theorem \ref{thm;4} and this observation, we have
\begin{flushleft}
(1$'$) When $m>1/2$,

$\quad$(i) If $a>0$ and $u\leqslant e^{-2/(2m-1)}e^{-b/a}$, then (\ref{equ;52}) holds without a lower-bound assumption. The resulting spatially constant solution satisfies $U(t)\to e^{-b/a}$ as $t\to-\infty$, contradicting $e^{-2/(2m-1)}e^{-b/a}<e^{-b/a}$. Thus
\begin{equation*}
 \sup_{M\times\mathbb{R}}u>e^{-\frac2{2m-1}}e^{-\frac ba};
\end{equation*}

$\quad$(ii) If $a=0$ and $b<0$, then (\ref{equ;52}) holds, whereas $U(t)=\exp(C+bt)$ is unbounded as $t\to-\infty$. Hence such $u$ does not exist;

$\quad$(iii) If $a=b=0$, then $U'=0$, and $u$ must be a positive constant;

$\quad$(iv) If $a<0$ and $u\geqslant e^{-2/(2m-1)}e^{-b/a}$, then (\ref{equ;52}) holds. This positive lower bound forces $C_0=0$, so $u\equiv e^{-b/a}$.

(2$'$) When $m<1/2$,

$\quad$(i) If $a>0$ and $u\geqslant e^{-2/(2m-1)}e^{-b/a}$, this assumption itself supplies the positive lower bound required by Theorem \ref{thm;4}, and (\ref{equ;52}) holds. But $U(t)\to e^{-b/a}$ as $t\to-\infty$, contradicting $e^{-2/(2m-1)}e^{-b/a}>e^{-b/a}$. Thus
\begin{equation*}
 \inf_{M\times\mathbb{R}}u<e^{-\frac2{2m-1}}e^{-\frac ba}.
\end{equation*}
For (ii)--(iv) below, assume that $u$ is bounded from below by a positive constant when applying Theorem \ref{thm;4}. This is part of the hypotheses in the smaller range specified in the statement; the remaining range is also covered by (4$'$).

$\quad$(ii) If $a=0$ and $b>0$, then (\ref{equ;52}) holds, whereas $U(t)=\exp(C+bt)$ is unbounded as $t\to+\infty$. Hence such $u$ does not exist;

$\quad$(iii) If $a=b=0$, then $u$ must be a positive constant;

$\quad$(iv) If $a<0$ and $u\leqslant e^{-2/(2m-1)}e^{-b/a}$, then (\ref{equ;52}) holds, and the positive lower bound gives $u\equiv e^{-b/a}$.

(3$'$) When $m=1/2$, the dimensional range implies $2\leqslant n\leqslant4$, and (\ref{equ;52}) reduces to $a\leqslant0$.

$\quad$(i) If $a<0$, then $u(x,t)=\exp(-b/a-C_0e^{at})$ for some $C_0\geqslant0$. It is identically $e^{-b/a}$ if it is also bounded from below by a positive constant;

$\quad$(ii) If $a=0$ and $b\neq0$, then such $u$ does not exist;

$\quad$(iii) If $a=b=0$, then $u$ must be a positive constant.

In particular, $n\geqslant5$ implies $m>1-1/\sqrt{n-1}\geqslant1/2$.
\end{flushleft}

Furthermore, as a direct consequence of Theorem \ref{thm;5}, with no positive lower-bound assumption, we have
\begin{flushleft}
(4$'$) When $(n-1)/(n+3)<m<1$,

$\quad$(i) If $a<0$, then $u(x,t)=\exp(-b/a-C_0e^{at})$ for some $C_0\geqslant0$, and $u\equiv e^{-b/a}$ precisely when $u$ is bounded from below by a positive constant;

$\quad$(ii) If $a=0$ and $b\neq0$, then such $u$ does not exist;

$\quad$(iii) If $a=b=0$, then $u$ must be a positive constant.
\end{flushleft}

We conclude (1)--(4) from (1$'$)--(4$'$). For (1), use (1$'$)(iii) or (3$'$)(iii) when $m\geqslant1/2$, and use (4$'$)(iii) when $(n-1)/(n+3)<m<1/2$; the remaining range uses (2$'$)(iii) and the stated positive lower bound. Part (2) follows from (4$'$)(ii), (2$'$)(ii) and (1$'$)(ii), respectively. Part (3) follows from (2$'$)(i) and (1$'$)(i), without an independent positive lower bound. Finally, in part (4), the first, third and fourth cases follow from (4$'$)(i). The second case follows from (2$'$)(iv) and the stated positive lower bound, while the fifth follows from (1$'$)(iv), whose threshold condition itself provides a positive lower bound. This completes the proof.
\end{proof}

\medskip

{\bf Acknowledgements} The first author is supported by NSFC No. 12671074, 12071352, 12271039.

\vspace{.1in}
\textbf{Conflict of Interest} The authors have no conflict of interest to declare.

\vspace{.1in}
\textbf{Data availability} The authors declare no datasets were generated or analysed during the current study.


\vskip24pt


\begin{thebibliography}{1}


    \bibitem{AB} D.~G. Aronson and P. B\'enilan, R\'egularit\'e{} des solutions de l'\'equation des milieux poreux dans ${\bf R}\sp{N}$, C. R. Acad. Sci. Paris S\'er. A-B {\bf 288} (1979), no.~2, {\rm A}103--{\rm A}105.


    \bibitem{GMP}
    G. Grillo, D.~D. Monticelli and F. Punzo, The porous medium equation on noncompact manifolds with nonnegative Ricci curvature: a Green function approach, J. Differential Equations {\bf 430} (2025), Paper No. 113191, 42 pp.


    
    \bibitem{Ha} 
    R.~S. Hamilton, A matrix Harnack estimate for the heat equation, Comm. Anal. Geom. {\bf 1} (1993), no.~1, 113--126.

    
    
    \bibitem{HHW} 
    D. Han, J. He and Y. Wang, Gradient estimates for $\Delta_pu+A|\nabla u|^q+Bu^r+C=0$ on manifolds and applications, J. Functional Analysis, {\bf 290} (2026), 111274.

   \bibitem{HHL} 
    G. Huang, Z, Huang and H.Li, Gradient estimates for the porous medium equations on Riemannian manifolds, J. Geom. Anal. {\bf 23} (2013), 1851--1875.


   \bibitem{HM} 
    G. Huang and B.~Q. Ma, Hamilton's gradient estimates of porous medium and fast diffusion equations, Geom. Dedicata {\bf 188} (2017), 1--16.


    \bibitem{HXZ}
    G. Huang, R.~W. Xu and F. Zeng, Hamilton's gradient estimates and Liouville theorems for porous medium equations, J. Inequal. Appl. {\bf 2016}, Paper No. 37, 7 pp.


    \bibitem{HS} 
    S. Huang and B. Shen, Gradient estimates for porous medium and fast diffusion equations on Riemannian manifolds via Moser iteration, Commun. Pure Appl. Anal. {\bf 24} (2025), no.~7, 1242--1260.



    
    \bibitem{JWZ} 
    C. Jin, Y. Wang and F. Zeng, Cheng-Yau logarithmic gradient estimates for a nonlinear elliptic equation on smooth metric measure spaces, preprint.




    \bibitem{LY} 
    P. Li and S. T. Yau, On the parabolic kernel of the Schrödinger operator, Acta Math. {\bf 156} (1986), no. 3-4, 153--201.


    \bibitem{L} 
    P. Lu, L. Ni, J-L. V\'azquez and C. Villani, Local Aronson-B\'enilan estimates and entropy formulae for porous medium and fast diffusion equations on manifolds, J. Math. Pures Appl. (9) {\bf 91} (2009), no.~1, 1--19.
    

    \bibitem{S-C} 
    L. Saloff-Coste, Uniformly elliptic operators on Riemannian manifolds, J. Differential Geom. {\bf 36} (1992), no.~2, 417--450.
    
    \bibitem{SZ}
    P. Souplet and Q.~S. Zhang, Sharp gradient estimate and Yau's Liouville theorem for the heat equation on noncompact manifolds, Bull. London Math. Soc. {\bf 38} (2006), no.~6, 1045--1053.


    \bibitem{V}
    J.~L. V\'azquez, Fundamental solution and long time behavior of the porous medium equation in hyperbolic space, J. Math. Pures Appl. (9) {\bf 104} (2015), no.~3, 454--484.																						

    \bibitem{WW1} 
    J. Wang and Y.~D. Wang, Gradient estimates for $\Delta u+a(x)u\log u+b(x)u=0$ and its parabolic counterpart under integral Ricci curvature bounds, Comm. Anal. Geom. {\bf 32} (2024), no.~4, 923--975.



    \bibitem{WW2} 
    J. Wang and Y.~D. Wang, Boundedness and gradient estimates for solutions to $\Delta u+a(x)u\log u+b(x)u=0$ on Riemannian manifolds, J. Differential Equations {\bf 402} (2024), 495--517.


    \bibitem{W}
    W. Wang, Harnack inequality, heat kernel bounds and eigenvalue estimates under integral Ricci curvature bounds, J. Differential Equations {\bf 269} (2020), no.~2, 1243--1277.

    
    
    \bibitem{WXZ} 
    W. Wang, R.~L. Xie and P. Zhang, Some gradient estimates and Liouville properties of the fast diffusion equation on Riemannian manifolds, Chinese Ann. Math. Ser. B {\bf 42} (2021), no.~4, 529--550.
    
																								

    \bibitem{X} 
    X. Xu, Gradient estimates for $u_t=\Delta F(u)$ on manifolds and some Liouville-type theorems, J. Differential Equations {\bf 252} (2012), no.~2, 1403--1420.
    
 


    \bibitem{Yau} 
    S. T. Yau, Harmonic functions on complete Riemannian manifolds, Comm. Pure Appl. Math. {\bf 28} (1975), 201--228.

    \bibitem{ZZ}
    Q.~S. Zhang and M. Zhu, Li-Yau gradient bounds on compact manifolds under nearly optimal curvature conditions, J. Funct. Anal. {\bf 275} (2018), no.~2, 478--515.
															

    \bibitem{Z2} 
    X. Zhu, Hamilton's gradient estimates and Liouville theorems for fast diffusion equations on noncompact Riemannian manifolds, Proc. Amer. Math. Soc. {\bf 139} (2011), no.~5, 1637--1644.

   \bibitem{Z1} 
    X. Zhu, Hamilton's gradient estimates and Liouville theorems for porous medium equations on noncompact Riemannian manifolds, J. Math. Anal. Appl. {\bf 402} (2013), no.~1, 201--206.
 										


													\end{thebibliography}
\end{document}